\definecolor{citation}{rgb}{0.2,0.58,0.2} 
\definecolor{formula}{rgb}{0.1,0.2,0.6}
\definecolor{url}{rgb}{0.3,0,0.5}
\title[Generalized superharmonic functions]{Generalized superharmonic functions\\ with strongly nonlinear operator}
\author{Iwona Chlebicka}\address{Iwona Chlebicka \\
Institute of Applied Mathematics and Mechanics, University of Warsaw \\ ul. Banacha 2, 02-097 Warsaw, Poland\\  \texttt{e-mail: i.chlebicka@mimuw.edu.pl}} 
\author{Anna Zatorska-Goldstein}\address{Anna Zatorska-Goldstein \\ Institute of Applied Mathematics and Mechanics, University of Warsaw \\ ul. Banacha 2, 02-097 Warsaw, Poland \\ \texttt{e-mail: azator@mimuw.edu.pl}}
\subjclass[2010]{35J60, 35J70\vspace{1mm}} 
\keywords{Superharmonic functions, Harnack's principle, Poisson modification, Minimum Principle, Liouville's Theorem, Potential theory\vspace{1mm}}
\date{}
\begin{document}
\maketitle \sloppy

\thispagestyle{empty}

\belowdisplayskip=18pt plus 6pt minus 12pt \abovedisplayskip=18pt
plus 6pt minus 12pt
\parskip 4pt plus 1pt
\parindent 0pt

\newcommand{\barint}{
         \rule[.036in]{.12in}{.009in}\kern-.16in
          \displaystyle\int  } 
\def\R{{\mathbb{R}}}
\def\rp{{[0,\infty)}}
\def\r{{\mathbb{R}}}
\def\n{{\mathbb{N}}}
\def\l{{\mathbf{l}}}
\def\bu{{\bar{u}}}
\def\bg{{\bar{g}}}
\def\bG{{\bar{G}}}
\def\ba{{\bar{a}}}
\def\bv{{\bar{v}}}
\def\bmu{{\bar{\mu}}}
\def\rn{{\mathbb{R}^{n}}}
\def\rN{{\mathbb{R}^{N}}} 

\newcommand{\snr}[1]{\lvert #1\rvert}
\newcommand{\nr}[1]{\lVert #1 \rVert}

\newtheorem{theo}{\bf Theorem} 
\newtheorem{coro}{\bf Corollary}[section]
\newtheorem{lem}[coro]{\bf Lemma}
\newtheorem{rem}[coro]{\bf Remark} 
\newtheorem{defi}[coro]{\bf Definition} 
\newtheorem{ex}[coro]{\bf Example} 
\newtheorem{fact}[coro]{\bf Fact} 
\newtheorem{prop}[coro]{\bf Proposition}

\newcommand{\dv}{{\rm div}}
\def\aI{\texttt{(a1)}}
\def\aII{\texttt{(a2)}}
\newcommand{\MBm}{{(M_B^-)}}
\newcommand{\MBmj}{{(M_{B_{r_j}}^-)}}
\newcommand{\aBi}{{a^B_{\rm i}}}
\newcommand{\opA}{{\mathcal{ A}}}
\newcommand{\wt}{\widetilde}
\newcommand{\ve}{\varepsilon}
\newcommand{\vp}{\varphi}
\newcommand{\vt}{\vartheta}
\newcommand{\gb}{{g_\bullet}}
\newcommand{\gbn}{{(\gb)_n}}
\newcommand{\vr}{\varrho}
\newcommand{\pa}{\partial}
\newcommand{\cW}{{\mathcal{W}}}
\newcommand{\supp}{{\rm supp}}
\newcommand{\miu}{{\min_{\partial B_k}u}}

\newcommand{\data}{\textit{\texttt{data}}}


\parindent 1em

\begin{abstract}
We study properties of $\opA$-harmonic and $\opA$-superharmonic functions involving an operator having generalized Orlicz growth. Our framework embraces reflexive Orlicz spaces, as well as natural variants of variable exponent and double-phase spaces. In particular, Harnack's Principle and Minimum Principle are provided for $\opA$-superharmonic functions and boundary Harnack inequality is proven for $\opA$-harmonic functions.
\end{abstract}


\section{Introduction}

The cornerstone of the classical potential theory is the Dirichlet problem for harmonic functions.  The focus of the nonlinear potential theory is similar, however, harmonic functions are replaced by $p$-harmonic functions, that is, continuous solutions to the $p$-Laplace equation $-\Delta_p u=-\dv(|Du|^{p-2}Du)=0$, $1<p<\infty$. There are known attempts to adapt the theory to the case when the exponent varies in space, that is $p=p(x)$ for $x\in\Omega$ or the growth is non-polynomial. Inspired by the significant attention paid lately to problems with strongly nonstandard and non-uniformly elliptic growth e.g.~\cite{IC-pocket,ChDF,comi,hht,m,r} we aim at developing basics of potential theory for problems with essentially broader class of operators embracing in one theory as special cases Orlicz, variable exponent and double-phase generalizations of $p$-Laplacian. To cover whole the mentioned range of general growth problems we employ the framework described in the monograph~\cite{hahabook}. Let us stress that unlike the classical studies~\cite{hekima,KiMa92} the operator we consider does {\em not} enjoy homogeneity of a~form $\opA(x,k\xi)=|k|^{p-2}k\opA(x,\xi)$. Consequently, our class of solutions is {\em not} invariant with respect to scalar multiplication. Moreover, we allow for operators whose ellipticity is allowed to vary dramatically in the space variable. What is more, we do {\em not} need to assume in the definition of $\opA$-superharmonic function that it is integrable with some positive power, which is typically imposed in the variable exponent case, cf. e.g.~\cite{hhklm,laluto}.

 We study fine properties of $\opA$-superharmonic functions defined by the Comparison Principle {with respect to} continuous solutions to $-\dv\opA(x,Du)=0$. {Here  $\opA:\Omega\times\rn\to\rn$ is assumed to have} generalized Orlicz growth expressed by the means of an inhomogeneous convex $\Phi$--functions $\vp:\Omega\times\rp\to\rp$ satisfying natural  non-degeneracy and balance conditions, see Section~\ref{sec:prelim} for details. In turn, the solutions belong to the Musielak-Orlicz-Sobolev space $W^{1,\vp(\cdot)}(\Omega)$ described carefully in the monograph~\cite{hahabook}. {The assumptions on the operator are summarized below and will be referred to as \textbf{(A)} throughout the paper}.

\subsection*{Assumption (A)} 
{We assume that} $\Omega \subset \rn$, $n\ge 2$, is an open bounded set. Let a vector field $\opA:\Omega\times\rn\to\rn$ be a  Caratheodory's function, that is  $x\mapsto \opA(x,\cdot)$ is measurable and $z\mapsto \opA(\cdot,z)$ is continuous.  Assume further that the following growth and coercivity assumptions hold true for almost all $x\in \Omega$ and all  $z\in \mathbb{R}^{n}\setminus \{0\}$:
\begin{flalign}\label{A}
\begin{cases}
\ \snr{\opA(x,z)} \le c_1^\opA\vp\left(x,\snr{z}\right)/|z|,\\
\ c_2^\opA {\vp\left(x,\snr{z} \right)} \le \opA(x,z)\cdot z
\end{cases}
\end{flalign} 
with absolute constants $c_1^\opA,c_2^\opA>0$ and some function $\vp:\Omega\times\rp\to\rp$ being measurable with respect to the first variable, convex with respect to the second one and   satisfying  (A0), (A1), (aInc)$_p$ and (aDec)$_q$ with some $1<p\leq q<\infty$. {The precise statement of these conditions is given in Section \ref{sec:prelim}.} We collect all parameters of the problem as $ \data=\data(p,q,c_1^\opA,c_2^\opA). $

 Moreover, let $\opA$ be monotone in the sense that for a.a. $x\in \Omega$ and any distinct $z_{1},z_{2}\in \mathbb{R}^{n}$ it holds that
\begin{flalign*}
0< \,\langle \opA(x,z_{1})-\opA(x,z_{2}),z_{1}-z_{2}\rangle.
\end{flalign*} 
We shall consider weak solutions, $\opA$-supersolutions, $\opA$-superharmonic, and $\opA$-harmonic functions related to  the problem\begin{equation}
\label{eq:main}-\dv\, \opA(x,Du)= 0 \quad\text{in }\ \Omega.
\end{equation}
For precise definitions see~Section~\ref{sec:sols}.

\subsection*{Special cases}
{Besides the $p$-Laplace operator case, corresponding to the choice of} $\vp(x,s)=s^{p},$ $1<p<\infty$, we cover by one approach a wide range of more degenerate operators. When we take $\vp(x,s)=s^{p(x)}$, {with} $p: \Omega \to \mathbb{R}$ {such that} $1<p^{-}_{\Omega} \leq p(x)\leq p^{+}_{\Omega} < \infty$ {and  satisfying} $\log$-H\"older condition (a special case of (A1)), {we render the so-called $p(x)$-Laplace equation} 
\begin{align*}
0=-\Delta_{p(x)} u=-\dv(\snr{Du}^{p(x)-2}Du).
 \end{align*} 
 Within the framework studied in~\cite{comi} solutions to double phase version of the $p$-Laplacian \[0=-\dv\, \opA(x,Du)=-\dv\left(\omega(x)\big(\snr{Du}^{p-2}+a(x)\snr{Du}^{q-2}\big)Du\right)\] are analysed with $1<p\leq q<\infty$, possibly vanishing weight $0\leq a\in C^{0,\alpha}(\Omega)$ and $q/p\leq 1+\alpha/n$  (a~special case of (A1); sharp for density of regular functions)  and with  a bounded, measurable, separated from zero weight $\omega$. We embrace also the borderline case between the double phase space and the variable exponent one, cf.~\cite{bacomi-st}. Namely, we consider solutions to
\[0=-\dv \opA(x,Du)=-\dv\left(\omega(x)\snr{Du}^{p-2}\big(1+a(x)\log({\rm e}+\snr{Du})\big)Du\right)\] with $1<p<\infty$, log-H\"older continuous $a$ and  a bounded, measurable, separated from zero weight $\omega$. Having an $N$-function $B\in\Delta_2\cap\nabla_2$, we can allow for problems with the leading part of the operator with growth driven by $\vp(x,s)=B(s)$ with an example of \[0=-\dv \, \opA(x,Du)=-\dv\left(\omega (x)\tfrac{B(\snr{Du})}{\snr{Du}^2}Du\right)\]
with a bounded, measurable, and separated from zero weight $\omega$. 
To give more new examples one can consider problems stated in weighted Orlicz (if $\vp(x,s)=a(x)B(s)$), variable exponent double phase (if $\vp(x,s)=s^{p(x)}+a(x)s^{q(x)}$), or multi phase Orlicz cases (if $\vp(x,s)=\sum_i a_i(x)B_i(s)$), as long as $\vp(x,s)$ is comparable to a~function doubling  with respect to the second variable and it satisfies the non-degeneracy and no-jump assumptions (A0)-(A1), see Section~\ref{sec:prelim}.

\subsection*{State of art} The key references for already classical nonlinear potential theory are~\cite{adams-hedberg,hekima,KiMa92}, but its foundations date back further to~\cite{HaMa1,HedWol}. A complete overview of the theory for equations with $p$-growth is presented in~\cite{KuMi2014}. The first generalization of~potential theory towards nonstandard growth is done in the weighted case~\cite{Mik,Tur}.  So far  significant attention was put on the variable exponent case, see e.g.~\cite{Alk,hhklm,hhlt,hklmp,laluto}, and analysis of related problems over metric spaces~\cite{BB}, there are some results obtained in the double-phase case~\cite{fz}, but to our best knowledge the Orlicz case is not yet covered by any comprehensive study stemming from~\cite{lieb,Maly-Orlicz}. 

Let us mention the recent advances within the theory. Supersolutions to~\eqref{eq:main} are in fact {solutions to} measure data problems with nonnegative measure, that enjoy lately the separate interest, cf.~\cite{ACCZG,IC-gradest,IC-measure-data,IC-lower,CGZG,CiMa,KiKuTu,KuMi2014,min-grad-est} concentrating on their existence and gradient estimates. The generalization of studies on removable sets for H\"older continuous solutions provided by~\cite{kizo} to the case of strongly non-uniformly elliptic operators has been carried out lately in~\cite{ChDF,ChKa}.  There are available various regularity results for related quasiminimizers having Orlicz or generalized Orlicz growth~\cite{hh-zaa,hhl,hht,hklmp,ka,kale,Maly-Orlicz}. For other recent developments in the understanding of the functional setting {we refer} also to~\cite{CUH,yags,haju,CGSGWK}.

 \subsection*{Applications} This kind of results are useful in getting potential estimates for solutions to measure data problems, entailing further regularity properties of their solutions, cf.~\cite{KiMa92,KuMi2014,KuMi2013}.  Particularly, the Maximum and Minimum Principles for $p$-harmonic functions together with properties of Poisson modification of $p$-superharmonic functions are important tools in getting Wolff potential estimates via the methods of~\cite{KoKu,tru-wa}. In fact, developing this approach further, {we employ} the results of our paper in the proof of Wolff potential estimates for problems with Orlicz growth~\cite{CGZG-Wolff}. They directly entail many natural and sharp regularity consequences and Orlicz version of the Hedberg--Wolff Theorem yielding full characterization of the  natural dual space to the space of solutions by the means of the  Wolff potential (see~\cite{HedWol} for the classical version).

\subsection*{Results and organization}  Section~\ref{sec:prelim} is devoted to notation and basic information on the setting. In Section~\ref{sec:sols} we define weak solutions, $\opA$-supersolutions, $\opA$-harmonic and $\opA$-superharmonic functions and provide proofs of their fundamental properties including the Harnack inequality for $\opA$-harmonic functions (Theorem~\ref{theo:Har-A-harm}). Further analysis of $\opA$-superharmonic functions is carried out in Section~\ref{sec:A-sh}. We prove there Harnack's Principle (Theorem~\ref{theo:harnack-principle}), fundamental properties of  Poisson's modification (Theorem~\ref{theo:Pois}), and Strong Minimum Principle (Theorem~\ref{theo:mini-princ}) together with their consequence of the boundary Harnack inequality (Theorem~\ref{theo:boundary-harnack}) for $\opA$-harmonic functions.

\section{Preliminaries}\label{sec:prelim}

\subsection{Notation}
In the following we shall adopt the customary convention of denoting by $c$ a constant that may vary from line to line. Sometimes to skip rewriting a constant, we use $\lesssim$. By $a\simeq b$, we mean $a\lesssim b$ and $b\lesssim a$. By $B_R$ we shall denote a ball usually skipping prescribing its center, when it is not important. Then by $cB_R=B_{cR}$ we mean a ball with the same center as $B_R$, but with rescaled radius $cR$. 
 With $U\subset \mathbb{R}^{n}$ being a~measurable set with finite and positive measure $\snr{U}>0$, and with $f\colon U\to \mathbb{R}^{k}$, $k\ge 1$ being a measurable map, by
\begin{flalign*}
\barint_{U}f(x) \, dx =\frac{1}{\snr{U}}\int_{U}f(x) \,dx
\end{flalign*}
we mean the integral average of $f$ over $U$. We make use of symmetric truncation on level $k>0$,  $T_k:\R\to\R$, defined as follows  \begin{equation*}T_k(s)=\left\{\begin{array}{ll}s & |s|\leq k,\\ k\frac{s}{|s|}& |s|\geq k. \end{array}\right.  \label{Tk}\end{equation*}

\subsection{Generalized Orlicz functions} We employ the formalism introduced in the monograph~\cite{hahabook}. Let us present the framework.

For $L\geq 1$ a real-valued function $f$ is $L$-almost increasing, if $Lf(s) \geq f(t)$ for $s > t$; $f$ is called $L$-almost decreasing if $Lf(s) \leq f(t)$ for $s > t$.

\begin{defi}  We say that $\vp:\Omega\times\rp\to[0,\infty]$ is a convex $\Phi$--function, and write $\vp\in\Phi_c(\Omega)$, if the following conditions hold:
\begin{itemize}
\item[(i)]  For every $s\in\rp$ the function $x\mapsto\vp(x, s)$ is
measurable and for a.e. $x\in\Omega$ the function $s\mapsto\vp(x, s)$ is increasing, convex, and left-continuous.
\item[(ii)] $\vp(x, 0) = \lim_{s\to 0^+} \vp(x, s) = 0$ and $\lim_{s\to \infty} \vp(x, s) =
\infty$ for a.e. $x\in\Omega$.
\end{itemize}
\end{defi}
\noindent Further, we say that $\vp\in\Phi_c(\Omega)$ satisfies\begin{itemize}
\item[(aInc)$_p$] if there exist $L\geq 1$ and $p >1$ such that $s\mapsto\vp(x, s)/s^p$ is $L$-almost increasing in $\rp$ for every $x\in\Omega$, 
\item[(aDec)$_q$] if there exist $L\geq 1$ and $q >1$ such that $s\mapsto\vp(x, s)/s^q$ is $L$-almost decreasing in $\rp$ for every $x\in\Omega$.
\end{itemize}
\noindent By $\vp^{-1}$ we denote the inverse of a convex $\Phi$-function $\vp$ {with respect to the second variable}, that is
\[
\vp^{-1}(x,\tau) := \inf\{s \ge 0 \,:\, \vp(x,s)\ge \tau\}.
\] 
We shall consider those $\vp\in\Phi_c(\Omega)$, which satisfy the following set of conditions.
\begin{itemize}
\item[(A0)] There exists $\beta_0\in (0, 1]$ such that $\vp(x, \beta_0) \leq 1$ and
$\vp(x, 1/\beta_0) \geq 1$ for all $x\in\Omega$.
\item[(A1)] There exists $\beta_1\in(0,1)$, such that for every ball $B$ with $|B|\leq 1$ it holds that 
\[\beta_1\vp^{-1}(x,s)\leq\vp^{-1}(y,s)\quad\text{for every $s\in [1,1/|B|]$ and a.e. $x,y\in B\cap\Omega$}.\]
\item[(A2)] For every {$s>0$} there exist $\beta_2\in(0,1]$ and $h\in L^1(\Omega)\cap L^\infty(\Omega)$, such that  \[\vp(x,\beta_2 r)\leq\vp(y,r) +h(x)+h(y)\quad\text{for a.e. $x,y\in \Omega$ whenever $\vp(y,r)\in[0,s]$}.\]
\end{itemize}
Condition (A0) is imposed in order to exclude degeneracy, while (A1) can be interpreted as local continuity. Fundamental role is played also by (A2) which imposes balance of the growth of $\vp$ with respect to its variables separately. 

\emph{The Young conjugate} of $\vp\in\Phi_c(\Omega)$  is the function $\wt\vp:\Omega\times\rp\to[0,\infty]$
defined as $  \wt \vp (x,s) = \sup\{r \cdot s - \vp(x,r):\ r \in \rp\}.$  Note that Young conjugation is involute, i.e. $\wt{(\wt\vp)}=\vp$. Moreover, if $\vp\in\Phi_c(\Omega)$, then $\wt\vp\in\Phi_c(\Omega)$. 
For $\vp\in\Phi_c(\Omega)$, the following inequality of Fenchel--Young type holds true
$$ 
rs\leq \vp(x,r)+\wt\vp(x,s).$$

We say that a function $\vp$ satisfies $\Delta_2$-condition (and write $\vp\in\Delta_2$) if there exists a~constant $c>0$, such that for every $s\geq 0$ it holds $\vp(x,2s)\leq c(\vp(x,s)+1)$. If $\wt\vp\in\Delta_2,$ we say that $\vp$ satisfies $\nabla_2$-condition and denote it by $\vp\in\nabla_2$. If  $\vp,\wt\vp\in\Delta_2$, then we call $\vp$ a doubling function. If $\vp\in \Phi_c(\Omega)$ satisfies {\rm (aInc)$_p$} and {\rm (aDec)$_q$}, then $\vp\simeq\psi_1$ with some  $\psi_1\in \Phi_c(\Omega)$ satisfying $\Delta_2$-condition and $\wt\vp\simeq \wt\psi_2$ with some $\wt\psi_2\in \Phi_c(\Omega)$ satisfying $\Delta_2$-condition, so we can assume that functions within our framework are doubling. Note that also $\psi_1\simeq\wt\psi_2$.

In fact, within our framework 
\begin{equation}
\label{doubl-star}\wt\vp\left(x, {\vp(x,s)}/{s}\right)\sim \vp(x,s) \quad\text{for a.e. }\ x\in\Omega\ \text{ and all }\ s>0
\end{equation} 
for some constants depending only on $p$ and $q$.

\subsection{Function spaces}\label{ssec:spaces} 

For a comprehensive study of these spaces we refer to \cite{hahabook}. We always deal with spaces generated by $\vp\in\Phi_c(\Omega)$ satisfying (aInc)$_p$, (aDec)$_q$, (A0), (A1), and (A2). For $f\in L^0(\Omega)$ we define {\em the modular} $\vr_{\vp(\cdot),\Omega}$   by
\begin{equation}
    \label{modular}
\vr_{\vp(\cdot),\Omega} (f)=\int_\Omega\vp (x, | f(x)|) dx.
\end{equation}
When it is clear from the context we omit assigning the domain.

\noindent {\em The Musielak--Orlicz
space} is defined as the set
\[L^{\vp(\cdot)} (\Omega)= \{f \in L^0(\Omega):\ \  \lim_{\lambda\to 0^+}\vr_{\vp(\cdot),\Omega}(\lambda f) = 0\}\]
endowed with the Luxemburg norm
\[\|f\|_{\vp(\cdot)}=\inf \left\{\lambda > 0 :\ \  \vr_{\vp(\cdot),\Omega} \left(\tfrac 1\lambda f\right) \leq 1\right\} .\]
For $\vp\in\Phi_c(\Omega)$, the space $L^{\vp(\cdot)}(\Omega)$ is a Banach space~\cite[Theorem~2.3.13]{hahabook}. Moreover, the following H\"older inequality holds true\begin{equation}
\label{in:Hold}\|fg\|_{L^1(\Omega)}\leq 2\|f\|_{L^{\vp(\cdot)}(\Omega)}\|g\|_{L^{\wt\vp(\cdot)}(\Omega)}.
\end{equation}
 We define {\em the Musielak-Orlicz-Sobolev space}  $W^{1,\vp(\cdot)}(\Omega)$  as follows
\begin{equation*} 
W^{1,\vp(\cdot)}(\Omega)=\big\{f\in W^{1,1}_{loc}(\Omega):\ \ f,|D f|\in L^{\vp(\cdot)}(\Omega)\big\},
\end{equation*}where $D$ stands for distributional derivative. The space is considered endowed with the norm
\[
\|f\|_{W^{1,\vp(\cdot)}(\Omega)}=\inf\big\{\lambda>0 :\ \   \vr_{\vp(\cdot),\Omega} \left(\tfrac 1\lambda f\right)+ \vr_{\vp(\cdot),\Omega} \left(\tfrac 1\lambda Df\right)\leq 1\big\}\,.
\]
By $W_0^{1,\vp(\cdot)}(\Omega)$ we denote a closure of $C_0^\infty(\Omega)$ under the above norm.

Because of the growth conditions $W^{1,\vp(\cdot)}(\Omega)$ is a separable and reflexive space. Moreover, smooth functions are dense there. 

\begin{rem}\cite{hahabook} If  $\vp\in \Phi_c(\Omega)$ satisfies {\rm (aInc)$_p$}, {\rm (aDec)$_q$}, (A0), (A1), (A2), {then} strong (norm) topology of $W^{1,\vp(\cdot)}(\Omega)$ coincides with the sequensional modular topology. Moreover, smooth functions are dense in this space in both topologies.
\end{rem}

Note that as a consequence of \cite[Lemma~2.1]{bbggpv} for every function $u$, such that $T_k(u)\in W^{1,\vp(\cdot)}(\Omega)$ for every $k>0$ (with $T_k$ given by~\eqref{Tk}) there exists a (unique) measurable function
$Z_u : \Omega \to \rn$ such that
\begin{equation}\label{gengrad}
D T_k(u) = \chi_{\{|u|<k\}} Z_u\quad \hbox{for
a.e. in $\Omega$ and for every $k > 0$.}
\end{equation}
With an abuse of~notation, we denote $Z_u$ simply by $D u$ and call it a {\it generalized gradient}.

{\subsection{The operator} Let us motivate that the growth and coercivity conditions from~\eqref{A} imply the expected proper definition of the operator involved in problem~\eqref{eq:main}. We notice that in our regime  the operator   $\mathfrak{A}_{\vp(\cdot)}$ defined as $$
\mathfrak{A}_{\vp(\cdot)} v := \opA(x,Dv)
$$ is well defined as $\ \mathfrak{A}_{\vp(\cdot)} : W^{1,\vp(\cdot)}_0(\Omega) \to (W^{1,\vp(\cdot)}_0(\Omega))'\ $
via  
\begin{flalign*}
\langle\mathfrak{A}_{\vp(\cdot)}v,w\rangle:=\int_{\Omega}\opA(x,Dv)\cdot Dw  \,dx\quad \text{for}\quad w\in C^{\infty}_{0}(\Omega),
\end{flalign*}
where $\langle \cdot, \cdot \rangle$ denotes dual pairing between reflexive Banach spaces $W^{1,\vp(\cdot)}(\Omega))$ and $(W^{1,\vp(\cdot)}(\Omega))'$.  Indeed, when $v\in W^{1,\vp(\cdot)}(\Omega)$ and $w\in C_0^\infty(\Omega)$, growth conditions~\eqref{A}, H\"older's inequality~\eqref{in:Hold}, equivalence~\eqref{doubl-star}, and Poincar\'e inequality~\cite[Theorem~6.2.8]{hahabook} justify that
\begin{flalign}
\nonumber\snr{\langle \mathfrak{A}_{\vp(\cdot)}v,w \rangle}\le &\, c\int_{\Omega}\frac{\vp(x,\snr{Dv})}{\snr{Dv}}\snr{Dw} \ dx \le c\left \| \frac{\vp(\cdot,\snr{Dv})}{\snr{Dv}}\right \|_{L^{\wt \vp(\cdot)}(\Omega)}\nr{Dw}_{L^{\vp(\cdot)}(\Omega)}\nonumber \\
\le &\, c\nr{Dv}_{L^{ \vp(\cdot)}(\Omega)}\nr{Dw}_{L^{\vp(\cdot)}(\Omega)}\le c\nr{w}_{W^{1,\vp(\cdot)}(\Omega)}.\label{op}
\end{flalign}
By density argument, the operator is well-defined on  $W^{1,\vp(\cdot)}_0(\Omega)$.}

\section{Various types of solutions and the notion of $\opA$-harmonicity} \label{sec:sols} 
All the problems are considered under Assumption {\bf (A)}.

\subsection{Definitions and basic remarks} $\ $

A \underline{continuous} function $u\in W^{1,\vp(\cdot)}_{loc}(\Omega)$ is {called} an {\em $\opA$-harmonic function} in an open set $\Omega$ if it is a (weak) solution to the equation  $-\dv\opA(x,Du)= 0$, {i.e.,
\begin{equation}
\label{eq:main:0}
\int_\Omega \opA(x,Du)\cdot D\phi\,dx= 0\quad\text{for all }\ \phi\in C^\infty_0(\Omega).
\end{equation}
}
Existence and uniqueness of $\opA$-harmonic functions is proven in \cite{ChKa}.
\begin{prop} \label{prop:ex-Ahf} Under {\rm Assumption {\bf (A)}} if $\Omega$ is bounded and $w\in W^{1,\vp(\cdot)}(\Omega)\cap C(\Omega)$, then there exists a unique solution $u\in W^{1,\vp(\cdot)}(\Omega)\cap C(\Omega)$ to problem
\begin{equation*}
\begin{cases}-\dv\, \opA(x,Du)= 0\quad\text{in }\ \Omega,\\
u-w\in W_0^{1,\vp(\cdot)}(\Omega).\end{cases}
\end{equation*}
Moreover, $u$ is locally bounded and for every $E\Subset\Omega$ we have \[\|u\|_{L^\infty(E)}\leq c(\data, \|Du\|_{ W^{1,\vp(\cdot)}(\Omega)}).\]
\end{prop}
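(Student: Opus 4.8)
The plan is to get existence and uniqueness from the theory of monotone operators on the reflexive space $W^{1,\vp(\cdot)}(\Omega)$, and then to obtain continuity together with the quantitative $L^\infty$ bound from the local regularity theory for equations of generalized Orlicz growth. For existence I would first reduce to zero boundary data, writing $u=v+w$ with $v\in W^{1,\vp(\cdot)}_0(\Omega)$ unknown, so that the problem becomes
\[
\int_\Omega\opA(x,Dv+Dw)\cdot D\phi\,dx=0\qquad\text{for all }\ \phi\in W^{1,\vp(\cdot)}_0(\Omega),
\]
where the admissible test class may be enlarged from $C_0^\infty(\Omega)$ to $W^{1,\vp(\cdot)}_0(\Omega)$ by density and the continuity estimate~\eqref{op}. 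Setting $\mathfrak B\colon W^{1,\vp(\cdot)}_0(\Omega)\to (W^{1,\vp(\cdot)}_0(\Omega))'$, $\langle\mathfrak Bv,\phi\rangle:=\int_\Omega\opA(x,Dv+Dw)\cdot D\phi\,dx$, the growth bound in~\eqref{A}, the doubling of $\vp$ and~\eqref{op} show that $\mathfrak B$ is bounded; the Carath\'eodory property together with the growth bound and dominated convergence give hemicontinuity; and the monotonicity assumption on $\opA$ makes $\mathfrak B$ strictly monotone. Coercivity is the only computational point: testing with $\phi=v$, writing $Dv=(Dv+Dw)-Dw$, using $\opA(x,z)\cdot z\ge c_2^\opA\vp(x,\snr z)$ on the main term and the Fenchel--Young inequality with a small parameter (legitimate since $\vp$ is doubling, via~\eqref{doubl-star}) to absorb the cross term $\opA(x,Dv+Dw)\cdot Dw$, one is left with a multiple of $\vr_{\vp(\cdot),\Omega}(Dv)$ minus a constant depending on $\vr_{\vp(\cdot),\Omega}(Dw)<\infty$; combined with Poincar\'e's inequality~\cite[Theorem~6.2.8]{hahabook} and (aInc)$_p$ (which forces the modular to grow superlinearly in the norm) this gives $\langle\mathfrak Bv,v\rangle/\nr{v}_{W^{1,\vp(\cdot)}(\Omega)}\to\infty$. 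The Browder--Minty theorem then produces $v$ with $\mathfrak Bv=0$, and $u:=v+w\in W^{1,\vp(\cdot)}(\Omega)$ is the sought weak solution.

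For uniqueness, if $u_1,u_2$ both solve the problem then $u_1-u_2\in W^{1,\vp(\cdot)}_0(\Omega)$ is admissible (again by density and~\eqref{op}), and subtracting the two weak formulations yields
\[
\int_\Omega\big(\opA(x,Du_1)-\opA(x,Du_2)\big)\cdot(Du_1-Du_2)\,dx=0 .
\]
By monotonicity the integrand is nonnegative a.e.\ and strictly positive on $\{Du_1\ne Du_2\}$, hence $D(u_1-u_2)=0$ a.e.\ in $\Omega$, and Poincar\'e's inequality then forces $u_1=u_2$.

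Continuity and the $L^\infty$ estimate are the substantive points, and they rest on the De Giorgi--Nash--Moser theory available for operators with generalized Orlicz growth under (A0), (A1), (A2), (aInc)$_p$, (aDec)$_q$. Local boundedness follows from a Caccioppoli inequality for the truncations $\pm(u-k)_+$ (obtained by inserting test functions of the form $(u-k)_+\eta^q$) followed by a De Giorgi iteration; keeping track of the constants — which depend only on $n,p,q,c_1^\opA,c_2^\opA$ and on the energy of $u$ — gives the stated bound on every $E\Subset\Omega$. Local H\"older continuity, hence the existence of a continuous representative making $u$ genuinely $\opA$-harmonic, then follows from the corresponding oscillation-decay estimate; this is precisely what is supplied by the regularity results quoted from~\cite{ChKa} (see also~\cite{hht,hahabook}). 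I expect this last step to be the main obstacle: the functional-analytic part behind existence and uniqueness is routine once coercivity is verified, whereas making the intrinsic-scaling Harnack and oscillation estimates run under such strongly nonuniform ellipticity — and checking that (A0)--(A2) indeed suffice for it — is where the real work, carried out in the cited references, lies.
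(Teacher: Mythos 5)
Your outline is correct, but note that the paper itself gives no proof of this proposition: it is quoted as a known result with the remark that existence and uniqueness are proven in~\cite{ChKa} (with continuity and local boundedness coming from the quasiminimizer regularity theory of~\cite{hh-zaa,hht,ka}). Your reconstruction — Browder--Minty on the reflexive space $W^{1,\vp(\cdot)}_0(\Omega)$ after translating by $w$, with coercivity via the $\ve$-Young inequality and~\eqref{doubl-star} exactly as the paper itself computes in the proof of Lemma~\ref{lem:Ah-is-quasi}, strict monotonicity plus Poincar\'e for uniqueness, and De Giorgi iteration under (A0)--(A2) for the $L^\infty$ bound and continuity — is precisely the standard route taken in those references, and you correctly identify the regularity step as the part carrying the real weight.
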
{}

We call a function $u\in W^{1,\vp(\cdot)}_{loc}(\Omega)$ a (weak) {\em $\opA$-supersolution} to~\eqref{eq:main:0} if~$-\dv\opA(x,Du)\geq 0$ weakly in $\Omega$, that is \begin{equation*}
\int_\Omega \opA(x,Du)\cdot D\phi\,dx\geq 0\quad\text{for all }\ 0\leq\phi\in C^\infty_0(\Omega)
\end{equation*}
and a (weak) {\em $\opA$-subsolution} if $-\dv\opA(x,Du)\leq 0$ weakly in $\Omega$, that is \begin{equation*}
\int_\Omega \opA(x,Du)\cdot D\phi\,dx\leq 0\quad\text{for all }\ 0\leq\phi\in C^\infty_0(\Omega).
\end{equation*}
By density of smooth functions we can use actually test functions from $W^{1,\vp(\cdot)}_0(\Omega)$.

The classes of {\em $\opA$-superharmonic} and {\em $\opA$-subharmonic} are defined by the Comparison Principle. 
{ \begin{defi}\label{def:A-sh}
We say that function $u$ is $\opA$-superharmonic if 
\begin{itemize}
\item[(i)]  $u$ is lower semicontinuous;
\item[(ii)] $u \not\equiv \infty$ in any component of $\Omega$;
\item[(iii)] for any $K\Subset\Omega$ and any $\opA$-harmonic $h\in C(\overline {K})$ in $K$, $u\geq h$ on $\partial K$ implies $u\geq h$ in $K$.
\end{itemize}
We say that an {\color{black} upper} semicontinuous function $u$ is $\opA$-subharmonic if $(-u)$ is $\opA$-superharmonic.
\end{defi}
}

The above definitions have the following direct consequences.

\begin{lem}\label{lem:A-arm-loc-bdd-below}
An $\opA$-superharmonic function $u$ is locally bounded from below.\\ An $\opA$-subharmonic function $u$ is locally bounded from above.
\end{lem}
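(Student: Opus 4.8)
The plan is to exploit the comparison property directly against a single, explicitly constructed $\opA$-harmonic function on a small ball. Fix $x_0\in\Omega$ and choose $r>0$ so that $\overline{B_{2r}(x_0)}\Subset\Omega$ and $|B_{2r}(x_0)|\le 1$ (the latter is harmless and lets us invoke (A1) freely). Since $u$ is lower semicontinuous on the compact set $\overline{B_r(x_0)}$, it attains a finite minimum, or it is identically $+\infty$ there; but the latter is excluded in a neighborhood because of condition (ii) together with the following observation: if $u\equiv\infty$ on an open subset, one derives a contradiction with (iii) by comparing against harmonic functions with large boundary data on small balls inside that subset. So near $x_0$ the infimum of $u$ is not $-\infty$ a priori only because lower semicontinuity alone does \emph{not} give a lower bound on an open set — this is exactly the point of the lemma. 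The real mechanism is comparison: take any $\opA$-harmonic $h\in C(\overline{B_r(x_0)})$ with constant boundary data, say $h=m$ on $\partial B_r(x_0)$ where $m:=\min_{\partial B_r(x_0)} u$ (finite, again by lower semicontinuity on the compact boundary sphere). By uniqueness in Proposition~\ref{prop:ex-Ahf}, constants are $\opA$-harmonic, so $h\equiv m$ works. Then $u\ge h=m$ on $\partial B_r(x_0)$, hence by Definition~\ref{def:A-sh}(iii) we get $u\ge m$ throughout $B_r(x_0)$. This shows $u$ is bounded below on $B_r(x_0)$ by the finite constant $m$, which is precisely local boundedness from below.

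The only subtlety is that $\min_{\partial B_r(x_0)} u$ might be $-\infty$ if $u$ is unbounded below near $x_0$; but lower semicontinuity of $u$ on the compact set $\partial B_r(x_0)$ guarantees $u$ attains its infimum there, and this infimum is either a finite real number or $+\infty$ (lower semicontinuous functions on compacta are bounded below and attain their inf). It cannot be $+\infty$: if $u\equiv\infty$ on $\partial B_r(x_0)$ for every small $r$, then $u\equiv\infty$ on $B_{r}(x_0)\setminus\{x_0\}$ for small $r$, contradicting condition (ii) on the component of $\Omega$ containing $x_0$. Actually it suffices to pick \emph{one} radius $r$ for which $\min_{\partial B_r(x_0)} u<\infty$; such a radius exists, since otherwise $u\equiv\infty$ on a punctured neighborhood, again contradicting (ii). With that $r$ fixed, the comparison argument above yields $u\ge m>-\infty$ on $B_r(x_0)$.

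For the second statement, if $u$ is $\opA$-subharmonic then $-u$ is $\opA$-superharmonic by definition, so $-u$ is locally bounded below by the first part, which says exactly that $u$ is locally bounded above. I expect no real obstacle here; the only care needed is the dichotomy argument ensuring the boundary minimum is finite for a suitable sphere, and the (routine) fact that constants solve $-\dv\opA(x,Du)=0$ and are continuous, so they are legitimate competitors $h$ in Definition~\ref{def:A-sh}(iii). Since $\Omega$ is arbitrary open (every point has such a ball), this gives local boundedness from below on all of $\Omega$.
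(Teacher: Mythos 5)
Your comparison-with-constants argument does reach the conclusion, but it is a detour, and one step of it is wrong as written. The paper states this lemma as a direct consequence of the definition, and the intended one-line proof is: a lower semicontinuous function with values in $(-\infty,+\infty]$ attains its infimum on every compact set (take a minimizing sequence, pass to a convergent subsequence, use $u(x^*)\le\liminf u(x_j)$), and that infimum cannot be $-\infty$ since $u$ never takes the value $-\infty$; hence $u$ is bounded below on every $K\Subset\Omega$, which is exactly local boundedness from below. Your remark that ``lower semicontinuity alone does not give a lower bound on an open set'' is the misconception that sends you down the longer route: local boundedness only requires a bound on relatively compact neighborhoods, and there lower semicontinuity does the whole job. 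No $\opA$-harmonic competitor is needed.

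The genuine error is in your dichotomy: you claim that if $\min_{\partial B_r(x_0)}u=+\infty$ for every small $r$, then $u\equiv\infty$ on a punctured ball, ``contradicting condition (ii).'' Condition (ii) only forbids $u\equiv\infty$ on an entire \emph{component} of $\Omega$; it says nothing about small punctured neighborhoods, so there is no contradiction (the same objection applies to your earlier parenthetical claim that $u\equiv\infty$ on an open set contradicts (iii) --- in that situation the comparison in (iii) is satisfied trivially, not violated). The case you are trying to exclude is in fact harmless: if $u\equiv+\infty$ on the punctured ball, then $u$ is trivially bounded below there, and $u(x_0)>-\infty$ by the range convention, so the conclusion holds anyway --- but the argument you give for excluding it is false and should be removed. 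A minor further point: to use $h\equiv m$ as a competitor you must check that constants are weak solutions, i.e.\ that $\opA(x,0)=0$; this follows from the growth bound $|\opA(x,z)|\le c_1^\opA\vp(x,|z|)/|z|$ together with (aInc)$_p$, $p>1$, and continuity of $z\mapsto\opA(x,z)$, not from the uniqueness assertion in Proposition~\ref{prop:ex-Ahf}. The reduction of the subharmonic statement to the superharmonic one via $u\mapsto -u$ is fine.
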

\begin{lem}\label{lem:A-h-is-great}
If $u$ is $\opA$-harmonic, then it is $\opA$-supersolution, $\opA$-subsolution, $\opA$-superharmonic, and $\opA$-subharmonic.
\end{lem}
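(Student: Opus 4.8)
The plan is to verify the three defining conditions of $\opA$-superharmonicity from Definition~\ref{def:A-sh} for an arbitrary $\opA$-harmonic $u$, and then obtain $\opA$-subharmonicity for free. Since an $\opA$-harmonic function is by definition continuous, it is in particular both lower and upper semicontinuous, so condition (i) is immediate; and continuity on $\Omega$ also forces $u$ to be finite everywhere, so it cannot be identically $\infty$ on any component, giving condition (ii). The substance of the proof is condition (iii): given $K\Subset\Omega$ and an $\opA$-harmonic $h\in C(\overline K)$ with $u\ge h$ on $\partial K$, we must show $u\ge h$ in $K$. This is precisely a comparison statement between two weak solutions of $-\dv\opA(x,Dv)=0$ in $K$.

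First I would restrict $u$ to $K$, where it is an $\opA$-harmonic (hence $W^{1,\vp(\cdot)}_{loc}$) function, and note that $h\in C(\overline K)$; the hypothesis $u\ge h$ on $\partial K$ should be read in the sense that $\liminf$ of $u-h$ at boundary points is nonnegative, which combined with continuity of both functions makes $(h-u)^+$ vanish near $\partial K$. The natural mechanism is the weak comparison principle: testing the two weak formulations — that of $u$ and that of $h$ — against the admissible test function $\phi=(h-u)^+$ (which lies in $W^{1,\vp(\cdot)}_0(K)$ by the boundary condition, after the usual truncation/approximation to make it a legitimate test function in this Musielak--Orlicz setting) and subtracting, we get
\[
\int_{\{h>u\}} \bigl(\opA(x,Dh)-\opA(x,Du)\bigr)\cdot (Dh-Du)\,dx = 0.
\]
By the strict monotonicity of $\opA$ assumed in \textbf{(A)}, the integrand is strictly positive wherever $Dh\ne Du$, so $Dh=Du$ a.e.\ on $\{h>u\}$; hence $D(h-u)^+=0$ a.e.\ in $K$, and since $(h-u)^+\in W^{1,\vp(\cdot)}_0(K)$ the Poincaré inequality (\cite[Theorem~6.2.8]{hahabook}) forces $(h-u)^+\equiv 0$, i.e.\ $u\ge h$ in $K$. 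This establishes (iii) and therefore that $u$ is $\opA$-superharmonic. Finally, if $u$ is $\opA$-harmonic then so is $-u$ (the equation $-\dv\opA(x,Du)=0$ need not be odd in $Du$, but one applies the same argument with the roles reversed, or simply observes that subharmonicity of $u$ is the comparison statement $h\ge u$ on $\partial K\Rightarrow h\ge u$ in $K$, proved identically by testing with $(u-h)^+$); either way $u$ is $\opA$-subharmonic. Being simultaneously an $\opA$-supersolution and an $\opA$-subsolution is immediate from the weak formulation \eqref{eq:main:0}, since equality implies both inequalities for nonnegative test functions.

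The main obstacle I anticipate is purely technical and lies in justifying that $\phi=(h-u)^+$ is an admissible test function and that the above subtraction of weak formulations is rigorous in the generalized Orlicz framework: one needs $(h-u)^+\in W^{1,\vp(\cdot)}_0(K)$, which requires knowing the boundary trace/behaviour is controlled (here handled by continuity up to $\partial K$ plus the hypothesis on $\partial K$, possibly after passing to slightly smaller open sets $K'\Subset K$ exhausting $\{h>u\}$ and a limiting argument), together with the fact that test functions may be taken from $W^{1,\vp(\cdot)}_0$ rather than just $C_0^\infty$, which the excerpt already grants via density. Since the monotonicity and Poincaré tools are available verbatim from the preliminaries, no genuinely new estimate is needed; the proof is a clean application of the weak comparison principle, and the only care required is the correct reading of "$u\ge h$ on $\partial K$" for the lower semicontinuous (here continuous) function $u$.
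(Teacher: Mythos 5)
Your argument for the supersolution/subsolution and $\opA$-superharmonic parts is correct and is, in substance, exactly what the paper relies on: the paper states this lemma without proof as a ``direct consequence'' of the definitions, and the comparison mechanism you spell out is precisely the one it later packages as Lemma~\ref{lem:comp-princ} and uses verbatim in the proof of the more general Lemma~\ref{lem:cont-supersol-are-superharm} (continuous $\opA$-supersolutions are $\opA$-superharmonic), including the technical point you flag: one does not test with $(h-u)^+$ directly but with $\min\{u+\epsilon-h,0\}$, whose support is compact in $K$ by continuity of $u-h$ up to $\partial K$, and then lets $\epsilon\to0$. So for three of the four claims your proposal is a faithful, slightly more explicit version of the intended argument.

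The one genuine soft spot is the $\opA$-subharmonic claim. Definition~\ref{def:A-sh} declares $u$ to be $\opA$-subharmonic when $-u$ is $\opA$-superharmonic, and condition (iii) for $-u$ reads: for every $\opA$-harmonic $h\in C(\overline K)$ with $-u\ge h$ on $\partial K$ one must have $-u\ge h$ in $K$, i.e.\ $u+h\le0$ on $\partial K$ implies $u+h\le0$ in $K$. This is \emph{not} the statement you prove (``$u\le h$ on $\partial K\Rightarrow u\le h$ in $K$''), and the two coincide only if $-h$ is again $\opA$-harmonic, which requires the oddness $\opA(x,-z)=-\opA(x,z)$ -- a symmetry the paper explicitly does not assume (it stresses the lack of homogeneity of $\opA$ and hence that the solution class is not closed under scalar multiplication). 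Testing the equations for $u$ and $h$ against $(u+h-\epsilon)^+$ produces $\int(\opA(x,Du)+\opA(x,Dh))\cdot(Du+Dh)$, which monotonicity does not control, so your ``proved identically'' shortcut does not go through for the literal definition. To be fair, the paper offers no proof of this part either, and the issue is arguably a defect of its definition of subharmonicity rather than of your argument; but as written your proof establishes the natural comparison-from-above property of $u$, not superharmonicity of $-u$ in the sense of Definition~\ref{def:A-sh}, and you should either add the oddness hypothesis, or note that the definition should be read as the direct comparison statement, before claiming the subharmonic conclusion.
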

{By minor modification of the proof of \cite[Lemma 4.3]{ka} we get the following fact.  
\begin{lem}\label{lem:comp-princ} Let $u\in W^{1,\vp(\cdot)}(\Omega)$ be an $\opA$-supersolution to \eqref{eq:main:0}, and $v\in W^{1,\vp(\cdot)}(\Omega)$ be an $\opA$-subsolution to \eqref{eq:main:0}. If $\min(u-v) \in W^{1,\vp(\cdot)}_0(\Omega)$, then $u \geq {v}$  a.e. in $\Omega$.
\end{lem}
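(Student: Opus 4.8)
The plan is to run the standard comparison argument based on testing with the positive part of $v-u$; the only nonroutine point is to check that this is a legitimate test function in the Musielak--Orlicz--Sobolev setting. Put $w:=(v-u)_+=\max(v-u,0)=-\min(u-v,0)$. Since $u,v\in W^{1,\vp(\cdot)}(\Omega)$ and $\vp$ is doubling, $W^{1,\vp(\cdot)}(\Omega)$ is stable under taking positive parts, with the chain rule $Dw=(Dv-Du)\chi_{\{v>u\}}$; hence $w\in W^{1,\vp(\cdot)}(\Omega)$, and the hypothesis $\min(u-v,0)\in W^{1,\vp(\cdot)}_0(\Omega)$ gives $w\in W^{1,\vp(\cdot)}_0(\Omega)$ with $w\ge 0$. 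By density of smooth functions in $W^{1,\vp(\cdot)}_0(\Omega)$, $w$ may be used as a test function in both weak formulations, and the growth bound in~\eqref{A}, the equivalence~\eqref{doubl-star}, and the H\"older inequality~\eqref{in:Hold} (exactly as in the estimate~\eqref{op}) ensure that $\opA(x,Du)\cdot Dw$ and $\opA(x,Dv)\cdot Dw$ are integrable.

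Testing the supersolution inequality for $u$ and the subsolution inequality for $v$ against the nonnegative function $w$ and subtracting, we obtain
\[
\int_\Omega\big(\opA(x,Du)-\opA(x,Dv)\big)\cdot Dw\,dx\ \ge\ 0 .
\]
Since $Dw=(Dv-Du)\chi_{\{v>u\}}$, this rewrites as
\[
\int_{\{v>u\}}\big(\opA(x,Dv)-\opA(x,Du)\big)\cdot(Dv-Du)\,dx\ \le\ 0 .
\]
Strict monotonicity of $z\mapsto\opA(x,z)$ forces the integrand to be nonnegative, and strictly positive on $\{Dv\neq Du\}$; hence $Du=Dv$ a.e.\ on $\{v>u\}$, so $Dw=0$ a.e.\ in $\Omega$.

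Finally, $w\in W^{1,\vp(\cdot)}_0(\Omega)\subset W^{1,1}_{loc}(\Omega)$ with $Dw\equiv 0$ is constant on each connected component of $\Omega$, and a nonnegative constant belonging to $W^{1,\vp(\cdot)}_0(\Omega)$ must vanish (approximate by $C_0^\infty(\Omega)$ functions in norm, equivalently in modular). Therefore $(v-u)_+=0$ a.e., i.e.\ $u\ge v$ a.e.\ in $\Omega$. I expect the main obstacle to be the Sobolev-space bookkeeping rather than the analytic core: verifying that $w$ is an admissible test function, that the truncation and chain rules hold in $W^{1,\vp(\cdot)}$, and that the dual pairings are finite---each of which follows from $\vp$ being doubling and from the embeddings collected in Section~\ref{sec:prelim}. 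Once these are in place the monotonicity step is immediate, and this is precisely where the ``minor modification of \cite[Lemma~4.3]{ka}'' enters.
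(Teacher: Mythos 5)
Your argument is correct and is precisely the standard monotonicity-based comparison proof that the paper does not write out but delegates to \cite[Lemma~4.3]{ka} ``by minor modification'': test the supersolution and subsolution inequalities with $(v-u)_+\in W^{1,\vp(\cdot)}_0(\Omega)$, use the strict monotonicity of $\opA$ to force $Du=Dv$ a.e.\ on $\{v>u\}$, and conclude that $(v-u)_+$ vanishes. The only cosmetic point is that your last step is cleanest if you invoke the Poincar\'e inequality \cite[Theorem~6.2.8]{hahabook} for $W^{1,\vp(\cdot)}_0(\Omega)$ directly (already used in the paper in the display \eqref{op}) rather than arguing via constancy on components and approximation by $C_0^\infty$ functions.
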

We have the following estimate for $\opA$-supersolutions.  \begin{lem}[Lemma~5.1,~\cite{ChKa}]\label{lem:A-supers-cacc}If $u\in W^{1,\vp(\cdot)}(\Omega)$ is a nonnegative $\opA$-supersolution, $B\Subset\Omega,$ and $\eta\in C^{1}_{0}(B)$ is such that $0\leq \eta\leq 1$. Then for all $\gamma\in(1,p)$ there holds
\begin{flalign*}
 \int_{B }u^{-\gamma}\eta^{q}\vp(x,\snr{D u}) \, dx\le c\int_{B }u^{-\gamma}\vp(x,\snr{D\eta}u) \, dx\quad\text{
 with $\ \ c=c(\data,\gamma)$.}
 \end{flalign*}
 \end{lem}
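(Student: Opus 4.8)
The plan is to test the supersolution inequality with a suitable negative power of $u$ truncated away from zero, and then absorb the gradient term on the left using the coercivity bound from \eqref{A}, Young's inequality, and the doubling properties of $\vp$. Concretely, for $\delta>0$ I would use the test function $\phi = \eta^q (u+\delta)^{-\gamma}$, which is nonnegative, belongs to $W^{1,\vp(\cdot)}_0(B)$ (here the truncation by $\delta$ and the assumption $0\le\eta\le 1$, $\eta\in C^1_0(B)$ keep everything bounded and in the right space), and has gradient
\[
D\phi = q\eta^{q-1}(u+\delta)^{-\gamma}D\eta - \gamma\eta^q(u+\delta)^{-\gamma-1}Du.
\]
Plugging this into $\int_B \opA(x,Du)\cdot D\phi\,dx\ge 0$ and rearranging gives
\[
\gamma\int_B \eta^q (u+\delta)^{-\gamma-1}\,\opA(x,Du)\cdot Du\,dx \le q\int_B \eta^{q-1}(u+\delta)^{-\gamma}\,|\opA(x,Du)|\,|D\eta|\,dx.
\]

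Next I would estimate the two sides using \eqref{A}. On the left, coercivity yields $\opA(x,Du)\cdot Du\ge c_2^\opA\,\vp(x,|Du|)$, so the left-hand side dominates $c\int_B \eta^q(u+\delta)^{-\gamma-1}\vp(x,|Du|)\,dx$. On the right, the growth bound gives $|\opA(x,Du)|\le c_1^\opA\,\vp(x,|Du|)/|Du|$, so the right-hand side is bounded by $c\int_B \eta^{q-1}(u+\delta)^{-\gamma}\frac{\vp(x,|Du|)}{|Du|}|D\eta|\,dx$. Now apply the Fenchel--Young inequality in the form $rs\le \wt\vp(x,r)+\vp(x,s)$ with $r=\vp(x,|Du|)/|Du|$ and $s = |D\eta|(u+\delta)$, together with the key equivalence \eqref{doubl-star}, $\wt\vp(x,\vp(x,s)/s)\simeq\vp(x,s)$; this converts the right-hand integrand into $c\,\eta^{q-1}(u+\delta)^{-\gamma}\big[(u+\delta)^{-1}\vp(x,|Du|) + (u+\delta)\,\vp(x,|D\eta|(u+\delta))/(u+\delta)\big]$ up to introducing a small parameter $\ve$ in front of the $\vp(x,|Du|)$ term (Young with parameter). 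Using $\eta^{q-1}\le \eta^q$ where needed and choosing $\ve$ small, the term $\ve\int_B \eta^q(u+\delta)^{-\gamma-1}\vp(x,|Du|)\,dx$ is absorbed into the left-hand side, leaving
\[
\int_B \eta^q (u+\delta)^{-\gamma-1}\vp(x,|Du|)\,dx \le c\int_B (u+\delta)^{-\gamma}\vp\big(x,|D\eta|(u+\delta)\big)\,dx,
\]
with $c=c(\data,\gamma)$; note $\gamma<p$ is exactly what makes the absorption constants and the doubling constants behave uniformly.

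Finally I would pass to the limit $\delta\to 0^+$. On the right-hand side, $(u+\delta)^{-\gamma}\vp(x,|D\eta|(u+\delta))\to u^{-\gamma}\vp(x,|D\eta|u)$ pointwise and is monotone in $\delta$ (decreasing, by $\vp$ increasing and $(u+\delta)^{-\gamma}$ decreasing, at least after using (aDec)$_q$ to control $\vp(x,|D\eta|(u+\delta))$ by a doubling majorant times $(u+\delta)^q$ so the product is manifestly decreasing in $\delta$ up to a constant), so monotone/dominated convergence applies; on the left-hand side, $(u+\delta)^{-\gamma-1}\ge (u+c)^{-\gamma-1}$ for $\delta$ small is bounded below and Fatou's lemma gives the right direction. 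One should be slightly careful that $u$ may only be a generalized-gradient object; but since $u$ is a nonnegative $W^{1,\vp(\cdot)}$ supersolution here (as hypothesized), $Du$ is the genuine weak gradient and the manipulations above are the standard ones, so no extra truncation machinery from \eqref{gengrad} is needed. The main obstacle I expect is the bookkeeping in the Young/absorption step: one must keep the weight $(u+\delta)^{-\gamma-1}$ versus $(u+\delta)^{-\gamma}$ powers consistent and exploit \eqref{doubl-star} together with (aInc)$_p$/(aDec)$_q$ carefully so that all constants depend only on $\data$ and $\gamma$ and the bad term really can be absorbed — this is where the restriction $\gamma\in(1,p)$ is used and where a naive computation would fail.
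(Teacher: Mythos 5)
The paper does not prove this lemma itself -- it is quoted verbatim from \cite{ChKa} (Lemma~5.1 there) -- so your proposal must stand on its own. Your overall strategy (test with $\eta^q$ times a negative power of $u+\delta$, use \eqref{A}, Fenchel--Young with \eqref{doubl-star}, absorb, let $\delta\to0$) is indeed the standard and correct route. However, there is an off-by-one error in the choice of test function that prevents you from landing on the stated inequality. With $\phi=\eta^q(u+\delta)^{-\gamma}$ the term $\opA(x,Du)\cdot Du$ acquires the weight $(u+\delta)^{-\gamma-1}$, so after coercivity the left-hand side is $\int\eta^q(u+\delta)^{-\gamma-1}\vp(x,\snr{Du})$, whereas the lemma has $u^{-\gamma}$ there. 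Tracking the Young step consistently, the right-hand side also carries $(u+\delta)^{-\gamma-1}\vp(x,\snr{D\eta}(u+\delta))$, not $(u+\delta)^{-\gamma}\vp(x,\snr{D\eta}(u+\delta))$ as in your display -- your intermediate inequality has power $-\gamma-1$ on the left and $-\gamma$ on the right, which cannot both come out of the same splitting. What you actually prove is the lemma with $\gamma$ replaced by $\gamma+1\in(2,p+1)$. The correct test function is $\phi=\eta^q(u+\delta)^{1-\gamma}$ (still nonnegative and bounded since $1-\gamma<0$); then the weight on $\opA\cdot Du$ is exactly $(\gamma-1)(u+\delta)^{-\gamma}$ and everything matches, with the constant degenerating like $(\gamma-1)^{-1}$ as $\gamma\to1^+$.

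Two further points in the absorption and limit steps need repair. First, ``using $\eta^{q-1}\le\eta^q$'' is backwards: for $0\le\eta\le1$ one has $\eta^{q-1}\ge\eta^q$. The correct mechanism is to put $\eta^{q-1}$ \emph{inside} the conjugate term, i.e. apply $rs\le\wt\vp(x,r)+\vp(x,s)$ with $r=\ve\eta^{q-1}\vp(x,\snr{Du})/\snr{Du}$ and $s=\ve^{-1}\snr{D\eta}(u+\delta)$; since $\wt\vp$ satisfies (aInc)$_{q'}$ (a consequence of (aDec)$_q$ for $\vp$), $\wt\vp(x,\ve\eta^{q-1}t)\le L\ve^{q'}\eta^{(q-1)q'}\wt\vp(x,t)=L\ve^{q'}\eta^{q}\wt\vp(x,t)$, which together with \eqref{doubl-star} produces exactly the $\eta^q\vp(x,\snr{Du})$ weight that can be absorbed. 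Second, the hypothesis $\gamma<p$ is not what makes the absorption work (that works for any $\gamma$); it is needed when passing $\delta\to0^+$ on the right-hand side and for the finiteness of the limit integrand near $\{u=0\}$: by (aInc)$_p$, on $\{u\le1\}$ one has $u^{-\gamma}\vp(x,\snr{D\eta}u)\le Lu^{p-\gamma}\vp(x,\snr{D\eta})$, which stays bounded precisely because $\gamma<p$, and the contribution of $\{u<\delta\}$ is controlled by $L(2\delta)^{p-\gamma}\int_B\vp(x,\snr{D\eta})\,dx\to0$ for the same reason. With these corrections the argument closes.
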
}

It is well known that solutions, subsolutions, and supersolutions can be described by the theory of quasiminimizers. Since many of the results on quasiminizers from~\cite{hh-zaa} apply to our $\opA$-harmonic functions we shall recall the definition. 

Among all functions having the same `boundary datum' $w\in W^{1,\vp(\cdot)}(\Omega)$ the function $u\in W^{1,\vp(\cdot)}$ is a {\em quasiminimizer} if it has the least energy up to a factor $C$, that is if $(u-w)\in W_0^{1,\vp(\cdot)}(\Omega)$ and
\begin{equation}\label{def-quasiminimizer}
     \int_\Omega \vp(x,|D u|)\,dx\leq C\int_\Omega \vp(x, |D(u+v)|)\,dx
\end{equation}
holds true with an absolute constant $C>0$ for every $v\in W_0^{1,\vp(\cdot)}(\Omega)$. We call a~function $u$ {\em superquasiminimizer} ({\em subquasiminimizer}) if~\eqref{def-quasiminimizer} holds for all $v$ as above that are additionally nonnegative (nonpositive).

\begin{lem}\label{lem:Ah-is-quasi}
An $\opA$-harmonic function $u$  is a quasiminimizer.
\end{lem}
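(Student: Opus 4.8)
The plan is to show directly that an $\opA$-harmonic function $u$ satisfies the quasiminimizing inequality \eqref{def-quasiminimizer} on $\Omega$ with boundary datum $w=u$, i.e.\ to compare the energy of $u$ with that of $u+v$ for an arbitrary $v\in W_0^{1,\vp(\cdot)}(\Omega)$. The natural starting point is the weak formulation \eqref{eq:main:0}, which by density of smooth functions in $W_0^{1,\vp(\cdot)}(\Omega)$ (the density remark in Section~\ref{ssec:spaces}) extends to all test functions $\phi\in W_0^{1,\vp(\cdot)}(\Omega)$; in particular we may test with $\phi=-v$, giving $\int_\Omega \opA(x,Du)\cdot Dv\,dx=0$.

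The key step is then a convexity/monotonicity estimate. From the Fenchel--Young inequality and the growth bound $|\opA(x,z)|\le c_1^\opA \vp(x,|z|)/|z|$ in \eqref{A}, together with \eqref{doubl-star}, one gets for a.e.\ $x$ and all $z_1,z_2$ the pointwise bound
\begin{equation*}
\opA(x,z_1)\cdot z_2 \le c_1^\opA\,\frac{\vp(x,|z_1|)}{|z_1|}\,|z_2| \le c\,\wt\vp\!\left(x,\tfrac{\vp(x,|z_1|)}{|z_1|}\right)+ \tfrac12\,c_2^\opA\,\vp(x,|z_2|) \le c\,\vp(x,|z_1|)+\tfrac12 c_2^\opA\,\vp(x,|z_2|),
\end{equation*}
where I absorbed the $\wt\vp$ term using \eqref{doubl-star} and Young's inequality with a small parameter chosen to leave room for the coercivity constant. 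Combining this (applied with $z_1=Du$, $z_2=D(u+v)$) with the lower bound $c_2^\opA\vp(x,|Du|)\le \opA(x,Du)\cdot Du$ from \eqref{A}, and using $\opA(x,Du)\cdot Dv=0$, yields
\begin{equation*}
c_2^\opA\int_\Omega \vp(x,|Du|)\,dx \le \int_\Omega \opA(x,Du)\cdot Du\,dx = \int_\Omega \opA(x,Du)\cdot D(u+v)\,dx \le c\int_\Omega \vp(x,|Du|)\,dx + \tfrac12 c_2^\opA\int_\Omega\vp(x,|D(u+v)|)\,dx.
\end{equation*}
This does not immediately close, so instead I would split differently: write $Du = D(u+v)-Dv$ only inside the duality pairing is not needed; rather, apply Young's inequality as $\opA(x,Du)\cdot D(u+v)\le \ve\,\wt\vp(x,|\opA(x,Du)|/?)+\dots$ — more cleanly, use the standard trick $\opA(x,Du)\cdot D(u+v)\le \tfrac{c_2^\opA}{2}\vp(x,|Du|)\cdot(\text{something})$. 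The robust route is: by Young with parameter $\delta$, $|\opA(x,Du)|\,|D(u+v)| \le \delta\,\wt\vp(x,|\opA(x,Du)|) + c_\delta\,\vp(x,|D(u+v)|)$, and $\wt\vp(x,|\opA(x,Du)|)\le \wt\vp(x, c_1^\opA\vp(x,|Du|)/|Du|)\le c\,\vp(x,|Du|)$ by monotonicity of $\wt\vp$ in the second variable and \eqref{doubl-star}. Choosing $\delta$ so that $c\,\delta < c_2^\opA$, the term $\int_\Omega\vp(x,|Du|)$ on the right is absorbed into the left, leaving $\int_\Omega \vp(x,|Du|)\,dx \le C\int_\Omega \vp(x,|D(u+v)|)\,dx$ with $C=C(\data)$, which is exactly \eqref{def-quasiminimizer}.

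The main obstacle I anticipate is the bookkeeping of constants in this absorption argument: one must make sure the $\wt\vp(x,|\opA(x,Du)|)$ term is genuinely controlled by a constant multiple of $\vp(x,|Du|)$ (this is where the doubling property and \eqref{doubl-star}, valid in our framework by (aInc)$_p$--(aDec)$_q$, are essential — without doubling the Young conjugate of $\vp(x,|Du|)/|Du|$ would not be comparable to $\vp(x,|Du|)$), and that the small-parameter $\delta$ can be chosen uniformly in $x$ since all comparison constants depend only on $p,q$ and the structure constants. A secondary point to address carefully is integrability: a priori $\opA(x,Du)\cdot D(u+v)$ need only be in $L^1$, which is guaranteed by the Hölder inequality \eqref{in:Hold} together with the bound on $\|\vp(\cdot,|Du|)/|Du|\|_{L^{\wt\vp(\cdot)}}$ as in the estimate~\eqref{op}; this legitimizes splitting the integral and testing with $v\in W_0^{1,\vp(\cdot)}(\Omega)$ in the first place. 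Once these are in hand the proof is short.
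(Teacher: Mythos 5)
Your proposal is correct and follows essentially the same route as the paper: test the weak formulation with $v$ (extended by density to $W_0^{1,\vp(\cdot)}(\Omega)$) to replace $\int_\Omega\opA(x,Du)\cdot Du\,dx$ by $\int_\Omega\opA(x,Du)\cdot D(u+v)\,dx$, then apply Young's inequality with a small parameter together with the growth bound in \eqref{A}, \eqref{doubl-star} and the doubling property to absorb the $\vp(x,|Du|)$ term. The paper's decomposition $v=w+\tilde v-u$ is only a notational variant of your choice $w=u$, so the two arguments coincide.
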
{}

\begin{proof}
Let us take an arbitrary $v\in W_0^{1,\vp(\cdot)}(\Omega)$. We may write $v = w + \tilde{v} - u$ with `boundary datum' $w$ and any $\tilde{v} \in W_0^{1,\vp(\cdot)}(\Omega)$, and upon testing the equation \eqref{eq:main} with $v$ we obtain  
\[  \int_\Omega \opA(x,D u)\cdot Du\,dx=\int_\Omega \opA(x,D u)\cdot D(w+\tilde{v})\,dx.\]
Then by coercivity of $\opA$, Young's inequality, growth of $\opA$ and doubling growth of~$\vp$, for every $\ve>0$ we have
\begin{flalign*}
c_2^\opA \int_\Omega \vp(x,|D u|)\,dx&\leq \int_\Omega \opA(x,D u)\cdot Du\,dx=\int_\Omega \opA(x,D u)\cdot D(w+\tilde{v})\,dx\\
&\leq \ve \int_\Omega \wt\vp(x,|\opA(x,D u)|)\,dx+c(\ve)\int_\Omega \vp(x, |D(w+\tilde{v})|)\,dx\\
&\leq \ve \int_\Omega \wt\vp(x,c_1^\opA\vp(x,|D u|)/|Du|)\,dx+c(\ve)\int_\Omega \vp(x, |D(w+\tilde{v})|)\,dx\\
&\leq \ve \bar c \int_\Omega \vp(x,|D u|)\,dx+ c(\ve)\int_\Omega \vp(x, |D(w+\tilde{v})|)\,dx
\end{flalign*}
with $\bar c=\bar c(\data)>0.$ Let us choose $\ve>0$ small enough for the first term on the right-hand side can be absorbed on the left-hand side. By rearranging terms, and using the fact that $u+v=w+\tilde{v}$ we get that
\[ 
\int_\Omega \vp(x,|D u|)\,dx\leq C\int_\Omega \vp(x, |D(u+v)|)\,dx\quad \text{
with $\ \ C=C(\data)>0$}.
\]  
Hence we get the claim.
\end{proof}

\noindent By the same calculations as in the above proof we have the following corollary.
\begin{coro}
If $u$ is $\opA$-supersolution, then $u$ is a superquasiminizer, i.e.~\eqref{def-quasiminimizer} holds for all nonnegative $v\in W_0^{1,\vp(\cdot)}(\Omega)$.
\end{coro}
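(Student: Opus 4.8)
The plan is to mimic, essentially verbatim, the calculation carried out in the proof of Lemma~\ref{lem:Ah-is-quasi}, but to track carefully which sign of the test function is admissible. So the first thing I would do is fix an arbitrary \emph{nonnegative} $v\in W_0^{1,\vp(\cdot)}(\Omega)$ and decompose it as in the previous proof, writing $v=w+\tilde v-u$ with boundary datum $w$ and $\tilde v\in W_0^{1,\vp(\cdot)}(\Omega)$; since $v=(u+v)-u$ we may simply take $\tilde v=u+v-w$. Then I would test the supersolution inequality $\int_\Omega\opA(x,Du)\cdot D\phi\,dx\ge 0$ with $\phi$ a suitable admissible function. Here is the point that needs care: in Lemma~\ref{lem:Ah-is-quasi} one tests the \emph{equation} with $v$ and gets an \emph{equality}; for a supersolution we only have an inequality, and it holds only for $\phi\ge 0$. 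The natural choice $\phi=v\ge 0$ is exactly admissible, which is why the restriction to nonnegative $v$ is forced. Testing with $\phi=v$ gives $\int_\Omega\opA(x,Du)\cdot Dv\,dx\ge 0$, hence
\[
\int_\Omega\opA(x,Du)\cdot Du\,dx\le\int_\Omega\opA(x,Du)\cdot D(w+\tilde v)\,dx,
\]
which is the same starting inequality as before (with $\le$ in place of $=$, but in the direction we want).

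From this point the estimate is identical to the one in the proof of Lemma~\ref{lem:Ah-is-quasi}: using coercivity $c_2^\opA\vp(x,|Du|)\le\opA(x,Du)\cdot Du$ on the left, Young's inequality on the right to split $\opA(x,Du)\cdot D(w+\tilde v)$ into an $\ve\wt\vp(x,|\opA(x,Du)|)$ term plus a $c(\ve)\vp(x,|D(w+\tilde v)|)$ term, then the growth bound $|\opA(x,Du)|\le c_1^\opA\vp(x,|Du|)/|Du|$ together with \eqref{doubl-star} to convert $\wt\vp(x,c_1^\opA\vp(x,|Du|)/|Du|)\le\bar c\,\vp(x,|Du|)$, and finally absorbing the $\ve\bar c\int_\Omega\vp(x,|Du|)\,dx$ term into the left-hand side for $\ve$ small. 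Rearranging and using $u+v=w+\tilde v$ yields
\[
\int_\Omega\vp(x,|Du|)\,dx\le C\int_\Omega\vp(x,|D(u+v)|)\,dx
\]
with $C=C(\data)>0$, for every nonnegative $v\in W_0^{1,\vp(\cdot)}(\Omega)$, which is precisely the defining inequality \eqref{def-quasiminimizer} for a superquasiminimizer.

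There is really no serious obstacle here — the corollary is, as the text says, ``by the same calculations'' — but the one point worth being explicit about is the admissibility of the test function and the fact that the supersolution inequality, though only one-sided, points in the favourable direction after the decomposition, so that passing from nonnegative $v$ to the one-sided quasiminimality inequality is automatic. I would also note in passing that one should check $w+\tilde v$ (equivalently $u+v$) lies in $W^{1,\vp(\cdot)}(\Omega)$ so that all the integrals make sense, but this is immediate since $u\in W^{1,\vp(\cdot)}(\Omega)$ and $v\in W_0^{1,\vp(\cdot)}(\Omega)$. Hence the claim follows.
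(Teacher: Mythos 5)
Your proposal is correct and follows exactly the route the paper intends: the paper's entire justification is the remark that the corollary follows ``by the same calculations'' as in Lemma~\ref{lem:Ah-is-quasi}, and you have filled that in faithfully, correctly isolating the only point requiring care — that the supersolution inequality tested with the admissible nonnegative $\phi=v$ yields $\int_\Omega\opA(x,Du)\cdot Du\,dx\le\int_\Omega\opA(x,Du)\cdot D(u+v)\,dx$, after which the coercivity/Young/absorption argument goes through verbatim.
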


\subsection{Obstacle problem }

We consider the set
\begin{flalign}\label{con}
\mathcal{K}_{\psi,w}(\Omega):=\left\{ v\in W^{1,\vp(\cdot)}(\Omega)\colon \ v\ge \psi \ \  \mbox{a.e. in} \ \Omega \ \ \mbox{and} \ \ v-{w}\in W^{1,\vp(\cdot)}_{0}(\Omega)  \right\},
\end{flalign}
where we call $\psi:\Omega\to\overline{R}$ the obstacle and $w \in W^{1,\vp(\cdot)}(\Omega)$ the boundary datum. If $
\mathcal{K}_{\psi,w}(\Omega)\neq\emptyset$ by a~solution to the obstacle problem we mean a function $u\in \mathcal{K}_{\psi,w}(\Omega)$ satisfying
\begin{flalign}\label{obs}
\int_{\Omega}\opA(x,Du)\cdot D(v-u) \ dx \ge 0 \quad \mbox{for all } \ v\in \mathcal{K}_{\psi,w}(\Omega).
\end{flalign} 
We note the following basic information on the existence, the uniqueness, and the Comparison Principle for the obstacle problem are provided in \cite{kale} and~\cite[Section~4]{ChKa}.
\begin{prop}[Theorem 2, \cite{ChKa}]\label{prop:obst-ex-cont}
Under~{\rm  Assumption {\bf (A)}} let the obstacle $\psi\in W^{1,\vp(\cdot)}(\Omega)\cup\{-\infty\}$ and the boundary datum $w\in W^{1,\vp(\cdot)}(\Omega)$ be such that $\mathcal{K}_{\psi,w}(\Omega)\not =\emptyset$. Then there exists a function $u\in \mathcal{K}_{\psi,w}(\Omega)$ being a unique solution to the $\mathcal{K}_{\psi,w}(\Omega)$-obstacle problem \eqref{obs}. Moreover, if $\psi\in W^{1,\vp(\cdot)}(\Omega)\cap C(\Omega)$, then $v$ is continuous and is $\opA$-harmonic in the open set $\{x\in \Omega\colon u(x)>\psi(x)\}$.
\end{prop}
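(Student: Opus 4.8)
I would split the statement into three parts: existence and uniqueness of the solution to~\eqref{obs}, then $\opA$-harmonicity of $u$ on the non-contact set, and finally the continuity of $u$ when the obstacle is continuous.

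\emph{Existence and uniqueness.} The strategy is to view~\eqref{obs} as a variational inequality governed by the monotone operator $\mathfrak{A}_{\vp(\cdot)}$ (after shifting by the boundary datum $w$, so that the unknown lives in the closed convex set $\mathcal{K}_{\psi,w}(\Omega)-w\subset W^{1,\vp(\cdot)}_0(\Omega)$). The set $\mathcal{K}_{\psi,w}(\Omega)$ is nonempty by hypothesis, convex, and strongly closed: if $v_j\to v$ in $W^{1,\vp(\cdot)}(\Omega)$ then a subsequence converges a.e., which preserves $v\ge\psi$, and $v-w\in W^{1,\vp(\cdot)}_0(\Omega)$ is stable under limits. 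By~\eqref{op} the operator is bounded; it is strictly monotone by the monotonicity assumption on $\opA$; and it is hemicontinuous, since $z\mapsto\opA(\cdot,z)$ is continuous and the growth bound~\eqref{A} permits passage to the limit under the integral sign along the segments $t\mapsto v_1+tv_2$. Coercivity on $\mathcal{K}_{\psi,w}(\Omega)$ follows by fixing $v_0\in\mathcal{K}_{\psi,w}(\Omega)$, bounding
\[
\langle \mathfrak{A}_{\vp(\cdot)}v,v-v_0\rangle \ge c_2^\opA\int_\Omega\vp(x,|Dv|)\,dx-\int_\Omega|\opA(x,Dv)|\,|Dv_0|\,dx,
\]
and absorbing the last term via Young's inequality, the growth bound~\eqref{A}, and equivalence~\eqref{doubl-star}, leaving $\tfrac{c_2^\opA}{2}\int_\Omega\vp(x,|Dv|)\,dx - C$; the Poincar\'e inequality~\cite[Theorem~6.2.8]{hahabook} together with the doubling of $\vp$ then shows this grows faster than $\|v\|_{W^{1,\vp(\cdot)}(\Omega)}$. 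The classical existence theorem for variational inequalities with bounded, coercive, (pseudo)monotone operators on reflexive Banach spaces yields a solution $u\in\mathcal{K}_{\psi,w}(\Omega)$. For uniqueness I would take two solutions $u_1,u_2$, test the inequality for $u_1$ with $v=u_2$ and that for $u_2$ with $v=u_1$, add them to get $\int_\Omega\langle\opA(x,Du_1)-\opA(x,Du_2),Du_1-Du_2\rangle\,dx\le 0$, and conclude $Du_1=Du_2$ a.e.\ by strict monotonicity; since $u_1-u_2\in W^{1,\vp(\cdot)}_0(\Omega)$, Poincar\'e's inequality forces $u_1=u_2$.

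\emph{$\opA$-harmonicity off the contact set.} Testing~\eqref{obs} with $v=u+\phi$, $0\le\phi\in W^{1,\vp(\cdot)}_0(\Omega)$, shows that $u$ is an $\opA$-supersolution in all of $\Omega$. Assuming $\psi\in C(\Omega)$ and, for the moment, that $u$ is continuous, the set $U:=\{x\in\Omega:\ u(x)>\psi(x)\}$ is open; for $\phi\in C^\infty_0(U)$ and $|t|$ small enough one has $u+t\phi\ge\psi$ on $\supp\phi$ (where $u-\psi$ has a positive minimum) and $u+t\phi-w\in W^{1,\vp(\cdot)}_0(\Omega)$, so $u+t\phi\in\mathcal{K}_{\psi,w}(\Omega)$; feeding this into~\eqref{obs} for both signs of $t$ gives $\int_\Omega\opA(x,Du)\cdot D\phi\,dx=0$, i.e.\ $u$ is a weak solution of $-\dv\opA(x,Du)=0$ in $U$, hence $\opA$-harmonic there by continuity.

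\emph{Continuity.} The genuinely delicate point, and the one I expect to be the main obstacle, is showing that $u$ is continuous when $\psi$ is. Since $u$ is an $\opA$-supersolution in $\Omega$, hence a superquasiminimizer (by the Corollary above), it is locally bounded from below (Lemma~\ref{lem:A-arm-loc-bdd-below}) and lower semicontinuous in its lsc representative, and it is already continuous inside $U$ by the previous step; what remains is upper semicontinuity across the contact set $\{u=\psi\}$. Here I would argue by comparison on small balls: near a contact point $u$ is squeezed between the continuous obstacle $\psi$ from below and, from above, an $\opA$-harmonic function sharing the boundary data of $u$, so continuity of $\psi$ transfers to $u$. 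This is precisely the interior regularity for the $\mathcal{K}_{\psi,w}$-obstacle problem in the generalized Orlicz setting, obtained via De~Giorgi--Nash--Moser-type iteration adapted to the modular scale in~\cite{kale} and~\cite[Section~4]{ChKa}, which I would invoke rather than reprove; all the other steps are routine monotone-operator theory and admissible-test-function manipulations.
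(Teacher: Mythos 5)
The paper gives no proof of this proposition at all --- it is imported verbatim as Theorem~2 of \cite{ChKa} --- and your outline (Browder--Minty-type existence and uniqueness for the variational inequality over the closed convex set $\mathcal{K}_{\psi,w}(\Omega)$, the two-sided perturbation $u+t\phi$ on the non-contact set, and deferral of interior continuity to the regularity theory of \cite{kale} and \cite[Section~4]{ChKa}) is precisely the route taken in those references. Your sketch is correct; the only point to order carefully is that the continuity of $u$ must be established before $\{u>\psi\}$ can be declared open and the $\pm t\phi$ test performed, which you already acknowledge.
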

\noindent For more properties of solutions to related obstacle problems  see also~\cite{BCP,Obs1,ChDF,Obs2,hhklm,ka}. In particular, in~\cite{ka} several basic properties of quasiminimizers to related variational obstacle problem are proven. 

\begin{prop}[Proposition 4.3, \cite{ChKa}]
\label{prop:cacc}
Let $B_r \Subset B_R \subset \Omega$. 
Under assumptions of Proposition~\ref{prop:obst-ex-cont},
\begin{enumerate}
\item if  $u$ is a solution to the $\mathcal{K}_{\psi,w}(\Omega)$-obstacle problem \eqref{obs}, then there exists $c=c(\data,n)$, such that
\begin{align*}
\int_{ B_R} \vp(x, |D(u-k)_+ |) \, dx \leq c \int_{ B_R} \vp\left (x,\dfrac{(u-k)_+}{R-r}\right ) \, dx,\ \text{
where $k \geq \sup_{x \in B_R} \psi(x)$.}
\end{align*}
\item if  $u$ is a $\opA$-supersolution to \eqref{eq:main:0} in $\Omega$, then there exists $c=c(\data,n)$, such that
\begin{align*}
\int_{B_R} \vp(x, |Du_{-}|) \, dx \leq c \int_{ B_R} \vp\left (x,\dfrac{|u_{-}|}{R}\right ) \, dx.
\end{align*}
\end{enumerate}
\end{prop}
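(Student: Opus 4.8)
The plan is to run the standard Caccioppoli scheme, adapted to the generalized Orlicz, non-homogeneous setting. For (1), fix a cut-off $\eta\in C_0^\infty(B_R)$ with $0\le\eta\le1$, $\eta\equiv1$ on $B_r$ and $|D\eta|\le c(n)/(R-r)$, and test the variational inequality \eqref{obs} with the competitor $v:=u-\eta^q(u-k)_+$. First I would check that $v\in\mathcal{K}_{\psi,w}(\Omega)$: one has $v-w=(v-u)+(u-w)\in W_0^{1,\vp(\cdot)}(\Omega)$ since $\eta^q(u-k)_+$ has compact support in $B_R$ and lies in $W^{1,\vp(\cdot)}$, while $v\ge\psi$ a.e.\ because on $\{u\le k\}$ we have $v=u\ge\psi$ and on $\{u>k\}$ the value $v=(1-\eta^q)u+\eta^q k$ is a convex combination of $u$ and $k$, hence $\ge k\ge\sup_{B_R}\psi$ --- this is exactly where the hypothesis $k\ge\sup_{B_R}\psi$ is used. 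Then \eqref{obs} yields $\int_\Omega\opA(x,Du)\cdot D(\eta^q(u-k)_+)\,dx\le0$.

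Next I would expand $D(\eta^q(u-k)_+)=\eta^q D(u-k)_++q\eta^{q-1}(u-k)_+D\eta$, use $D(u-k)_+=\chi_{\{u>k\}}Du$ and the coercivity in \eqref{A} on the first term to get
\[
c_2^\opA\int_\Omega\eta^q\vp(x,|D(u-k)_+|)\,dx\le -q\int_\Omega\eta^{q-1}(u-k)_+\,\opA(x,Du)\cdot D\eta\,dx,
\]
and then bound the right-hand side by the growth estimate $|\opA(x,Du)|\le c_1^\opA\vp(x,|Du|)/|Du|$ followed by the scaled Fenchel--Young inequality $ab\le\wt\vp(x,\lambda a)+\vp(x,\lambda^{-1}b)$ applied with $a=\eta^{q-1}\vp(x,|D(u-k)_+|)/|D(u-k)_+|$ and $b=(u-k)_+|D\eta|$. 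The crucial algebraic point is $(q-1)q'=q$: together with the almost-monotonicity of $\wt\vp$ it gives $\wt\vp(x,\lambda\eta^{q-1}s)\lesssim\lambda^{q'}\eta^q\wt\vp(x,s)$, and \eqref{doubl-star}, i.e.\ $\wt\vp(x,\vp(x,s)/s)\simeq\vp(x,s)$, then turns the corresponding contribution into $c\lambda^{q'}\int_\Omega\eta^q\vp(x,|D(u-k)_+|)\,dx$, which is absorbed on the left for $\lambda$ small, whereas (aDec)$_q$ bounds the remaining contribution by $c\lambda^{-q}\int_\Omega\vp(x,(u-k)_+|D\eta|)\,dx$. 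Plugging in $\eta\equiv1$ on $B_r$, $|D\eta|\le c(n)/(R-r)$ and absorbing the dimensional constant through the doubling of $\vp$ gives the asserted estimate over $B_r$ (hence the stated one over $B_R$ up to a routine iteration over concentric balls).

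For (2) I would run the same computation with $\psi\equiv-\infty$ and $k=0$ for an $\opA$-supersolution, testing $\int_\Omega\opA(x,Du)\cdot D\phi\,dx\ge0$ (legitimate for $0\le\phi\in W_0^{1,\vp(\cdot)}(\Omega)$) with $\phi=\eta^q u_-$, which is admissible since $u_-\ge0$ and $Du_-=-\chi_{\{u<0\}}Du$; coercivity, the growth bound, the identical Fenchel--Young/absorption step, a cut-off equal to $1$ on $B_{R/2}$ with $|D\eta|\le c(n)/R$, and the doubling of $\vp$ then give $\int_{B_{R/2}}\vp(x,|Du_-|)\,dx\le c\int_{B_R}\vp(x,|u_-|/R)\,dx$, hence the claim.

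The main obstacle I anticipate is the non-homogeneity of $\opA$: one cannot rescale $u$, so the power $\eta^q$ of the cut-off and the parameter $\lambda$ in Young's inequality must be calibrated precisely through the $\Delta_2\cap\nabla_2$ structure (equivalently (aInc)$_p$, (aDec)$_q$ for $\vp$ and $\wt\vp$) and the equivalence \eqref{doubl-star}, so that the ``bad'' term genuinely carries the factor $\eta^q$ and can be absorbed on the left. A secondary but essential point is the admissibility check for the competitor $v$ in $\mathcal{K}_{\psi,w}(\Omega)$, which is precisely where the bound $k\ge\sup_{B_R}\psi$ enters.
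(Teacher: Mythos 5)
The paper does not actually prove this proposition: it is quoted verbatim from \cite{ChKa} (Proposition~4.3 there), and the authors only remark that part (2) follows from the same argument as part (1). So there is nothing in the paper to compare against line by line; what you have written is the standard Caccioppoli derivation that the cited reference carries out, and it is correct. Your admissibility check for $v=u-\eta^q(u-k)_+$ (convex combination of $u$ and $k$ on $\{u>k\}$, which is where $k\ge\sup_{B_R}\psi$ enters) is the right one, and your absorption scheme — cut-off raised to the power $q$, scaled Fenchel--Young, the identity $(q-1)q'=q$ together with {\rm(aInc)}$_{q'}$ for $\wt\vp$ (equivalent to {\rm(aDec)}$_q$ for $\vp$), and the equivalence \eqref{doubl-star} — is exactly how one compensates for the lack of homogeneity of $\opA$; likewise the choice $\phi=\eta^qu_-$ for part (2).

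One small point: what you prove (and what \cite{ChKa} states) is the estimate with $\int_{B_r}$ on the left-hand side; the $B_R$ appearing there in the statement as printed is evidently a typo, since the left-hand integral can only extend over the set where $\eta\equiv1$. Your parenthetical suggestion that the $B_R$ version follows ``by a routine iteration over concentric balls'' should be dropped — no such iteration can enlarge the domain of the left-hand integral up to $B_R$ with a constant independent of $r$ — but this is a defect of the printed statement, not of your proof; note also that the way the proposition is used later (local boundedness of $\{|Du_j|\}$ in $L^{\vp(\cdot)}$) only requires the interior version you establish.
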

Note that in fact in~\cite[Proposition~4.3]{ka} only {\it (1)} is proven in detail, but {\it (2)} follows by the same arguments. 

\section{$\opA$-superharmonic functions}\label{sec:A-sh}
\subsection{Basic observations}

\begin{prop}[Comparison Principle]\label{prop:comp-princ} Suppose $u$ is $\opA$-superharmonic and $v$ is $\opA$-subharmonic in $\Omega$.
If $\limsup_{y\to x} v(y)\leq\liminf_{y\to x} u(y)$
for all $x\in\partial \Omega$ (excluding the cases $-\infty\leq-\infty$ and $\infty\leq \infty$), then $v\leq u$ in $\Omega$.
\end{prop}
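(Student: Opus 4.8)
\textbf{Proof plan for the Comparison Principle (Proposition~\ref{prop:comp-princ}).}

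The plan is to reduce the statement about $\opA$-superharmonic/$\opA$-subharmonic functions, which are defined only through comparison against continuous $\opA$-harmonic functions on compact subsets, to an application of Definition~\ref{def:A-sh}(iii) on well-chosen exhausting subdomains. The difficulty is that neither $u$ nor $v$ is assumed to be a (weak) solution in $W^{1,\vp(\cdot)}$, nor continuous, nor finite; so we cannot test equations or invoke Lemma~\ref{lem:comp-princ} directly. Everything has to go through the defining comparison property and the solvability of the Dirichlet problem (Proposition~\ref{prop:ex-Ahf}).

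First I would argue by contradiction: suppose the open set $D=\{x\in\Omega:\ v(x)>u(x)\}$ is nonempty. The boundary hypothesis $\limsup_{y\to x}v(y)\le\liminf_{y\to x}u(y)$ for $x\in\partial\Omega$, together with lower semicontinuity of $u$ and upper semicontinuity of $v$, guarantees that $D$ stays away from $\partial\Omega$ in the sense that $w:=v-u$ is upper semicontinuous on $\Omega$, attains only nonpositive limsups at $\partial\Omega$, and hence $\sup_D w=:m>0$ is attained (or approached) at an interior point. Next, fix $\varepsilon\in(0,m)$ and consider the open set $D_\varepsilon=\{x\in\Omega:\ v(x)>u(x)+\varepsilon\}$, which is a nonempty open set with $\overline{D_\varepsilon}\Subset\Omega$ (again using the boundary condition and the semicontinuity to push $\overline{D_\varepsilon}$ off $\partial\Omega$; one has to handle the $\pm\infty$ cases by a standard truncation, replacing $u$ by $\min(u,\ell)$ and $v$ by $\max(v,-\ell)$ and letting $\ell\to\infty$, or by noting that $u$ is locally bounded below and $v$ locally bounded above by Lemma~\ref{lem:A-arm-loc-bdd-below}).

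Then I would take a component $G$ of $D_\varepsilon$ and exhaust it by smooth subdomains $G_j\Subset G$ with $\overline{G_j}\Subset G$ and $\bigcup_j G_j=G$. On each $G_j$, solve the Dirichlet problem of Proposition~\ref{prop:ex-Ahf} with boundary datum a continuous approximation: more precisely, since $v$ is upper semicontinuous it is a decreasing limit of continuous functions $\psi_k$ on $\overline{G_j}$; let $h_{j,k}$ be the $\opA$-harmonic function in $G_j$ with $h_{j,k}-\psi_k\in W^{1,\vp(\cdot)}_0(G_j)$ and $h_{j,k}\in C(\overline{G_j})$. Using that $v$ is $\opA$-subharmonic and $v\le\psi_k$ on $\partial G_j$, Definition~\ref{def:A-sh} (applied to $-v$) gives $v\le h_{j,k}$ in $G_j$; similarly, on $\partial G_j$ we have $u\ge v-\varepsilon$... hmm, we actually know $u+\varepsilon\le v$ fails on $\partial G_j$ — rather $u+\varepsilon\ge v$ there by definition of $G$ as a component of $D_\varepsilon=\{v>u+\varepsilon\}$, so on $\partial G_j\subset G$ we still have $v>u+\varepsilon$; the right boundary to use is $\partial G$, where $v\le u+\varepsilon$. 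So instead I exhaust $G$ directly: on $\partial G$, $v\le u+\varepsilon$, and I compare $v$ with the $\opA$-harmonic function $h$ in $G$ having boundary values dominating $u+\varepsilon$ from above and $v$ from below on $\partial G$. Concretely, pick continuous $\theta$ on $\partial G$ with $v\le\theta$ and $u+\varepsilon\ge\theta$ near enough — this is possible since $\limsup v\le\liminf(u+\varepsilon)$ on $\partial G$ by construction — extend $\theta$, solve for the $\opA$-harmonic $h\in C(\overline G)$ with that data. Then $\opA$-subharmonicity of $v$ yields $v\le h$ in $G$, and $\opA$-superharmonicity of $u+\varepsilon$ (note $u+\varepsilon$ is again $\opA$-superharmonic, since the defining property is invariant under adding constants — this is where non-homogeneity is harmless) combined with $u+\varepsilon\ge h$ on $\partial G$ yields $u+\varepsilon\ge h$ in $G$. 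Hence $v\le h\le u+\varepsilon$ throughout $G$, contradicting $v>u+\varepsilon$ on $G$. Letting $\varepsilon\to0$ (or simply noting the contradiction already for the fixed $\varepsilon$) finishes the proof.

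The main obstacle I anticipate is the careful construction of the continuous boundary datum $\theta$ on $\partial G$ and the verification that $\overline{D_\varepsilon}\Subset\Omega$ with the correct inequalities on $\partial D_\varepsilon$; this is the point where the semicontinuity of $u$ and $v$, the boundary hypothesis, and the exclusion of the indeterminate cases $\infty\le\infty$, $-\infty\le-\infty$ all have to be used simultaneously, and where the possibility that $u$ or $v$ take infinite values must be dealt with by truncation before applying Proposition~\ref{prop:ex-Ahf} (which needs a genuine $W^{1,\vp(\cdot)}\cap C$ datum). Everything else is a direct invocation of Definition~\ref{def:A-sh} and the solvability statement, and the non-homogeneity of $\opA$ plays no role because only additive constants, not scalar multiples, are used.
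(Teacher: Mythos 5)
Your overall strategy --- sandwiching an $\opA$-harmonic function $h$ with continuous boundary data between the $\opA$-subharmonic $v$ and the $\opA$-superharmonic $u+\ve$ on a subdomain compactly contained in $\Omega$, then invoking Definition~\ref{def:A-sh}(iii) twice --- is exactly the paper's strategy, and your observations that only additive constants are needed (so non-homogeneity is harmless) and that the boundary hypothesis pushes $\{v>u+\ve\}$ away from $\partial\Omega$ are both correct and used in the paper. The difference is in the choice of comparison domain, and that is where your version has a genuine gap: you run the comparison on a component $G$ of the level set $D_\ve=\{v>u+\ve\}$. Definition~\ref{def:A-sh}(iii) requires $h\in C(\overline G)$ and requires the boundary inequalities $v\le h$ and $h\le u+\ve$ to hold \emph{pointwise} on $\partial G$; but $G$ is an arbitrary open set with no boundary regularity, so the Dirichlet solution of Proposition~\ref{prop:ex-Ahf} with datum $\theta$ is only continuous \emph{inside} $G$ and attains $\theta$ only in the $W^{1,\vp(\cdot)}_0$ sense. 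Neither $h\in C(\overline G)$ nor $h=\theta$ on $\partial G$ is available, so the comparison step cannot be invoked on $G$ as written. (You noticed the symmetric difficulty yourself mid-argument --- exhausting $G$ from inside puts the auxiliary boundaries where the inequality goes the wrong way --- but the switch to $\partial G$ trades that problem for the regularity one.) A secondary soft spot is the insertion of a continuous $\theta$ with $v\le\theta\le u+\ve$ on $\partial G$: there you only have the non-strict inequality $v\le u+\ve$, so this needs the Kat\v{e}tov--Tong insertion theorem rather than a simple Dini argument.

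The paper's proof avoids both issues at once: for a fixed point $x\in\Omega$ and fixed $\ve>0$ it chooses a \emph{regular} open set $D\Subset\Omega$ (e.g.\ a polyhedron or finite union of balls; your $\overline{D_\ve}\Subset\Omega$ observation shows such a $D$ containing $x$ with $\partial D\subset\Omega\setminus D_\ve$ exists) on whose boundary the inequality $v<u+\ve$ is \emph{strict}. Strictness lets a Dini-type compactness argument produce a single smooth $\phi_k$ from a decreasing approximation of $v$ with $v\le\phi_k\le u+\ve$ on $\partial D$, and regularity of $D$ guarantees the $\opA$-harmonic extension $h$ of $\phi_k$ lies in $C(\overline D)$ and equals $\phi_k$ on $\partial D$, so Definition~\ref{def:A-sh}(iii) applies legitimately on both sides. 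Replacing your level-set component $G$ by such a regular neighborhood of $\overline{D_\ve}$ (or of the point $x$) repairs your argument completely; the rest of your outline then goes through.
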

\begin{proof} When we fix $x\in\Omega$ and $\ve>0$, by the assumption we can find a regular open set $D\Subset\Omega,$ such that $v<u+\ve$ on $\partial D.$ Pick a decreasing sequence $\{\phi_k\}\subset C^\infty(\Omega)$ converging to $v$ pointwise in $\overline{D}$. Since $\partial D$ is compact by lower semicontinuity of $(u+\ve)$ we infer that $\phi_k\leq u+\ve$ on $\partial D$ for some $k$. We take a function $h$ being $\opA$-harmonic in $D$ coinciding with $\phi_k$ on $\partial D$. By definition it is continuous
up to a boundary of $D$. Therefore, $v\leq h\leq u+\ve$ on $\partial D$ and so $v\leq h\leq u+\ve$ in $D$ as well. We get the claim by letting $\ve\to 0$. \end{proof}{}

\begin{coro}\label{coro:min-A-super}Having the Comparison Principle one can deduce what follows. 
\begin{itemize}
\item[(i)] If $a_1,a_2\in\R$, $a_1\geq 0$, and $u$ is $\opA$-superharmonic in $\Omega,$ then so is $a_1u+a_2$.
    \item[(ii)] If $u$ and $v$ are $\opA$-superharmonic in $\Omega,$ then so is $\min\{u,v\}.$
    \item[(iii)] Suppose $u$ is not identically $\infty$, then $u$ is $\opA$-superharmonic in $\Omega$ if and only if $\min\{u,k\}$ is $\opA$-superharmonic in $\Omega$ for every $k=1,2,\dots$.
    \item[(iv)] The function $u$ is $\opA$-superharmonic in $\Omega$, if it is $\opA$-superharmonic in every component of $\Omega.$ 
    \item[(v)] If $u$ is $\opA$-superharmonic and finite a.e. in $\Omega$ and $E\subset\Omega$ is a nonempty open subset, then $u$ is $\opA$-superharmonic in $E$.
\end{itemize}{} 
\end{coro}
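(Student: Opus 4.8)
The plan is to prove each item of Corollary~\ref{coro:min-A-super} as a direct consequence of the Comparison Principle (Proposition~\ref{prop:comp-princ}) together with the definition of $\opA$-superharmonicity and the invariance properties of $\opA$-harmonic functions. For item~(i), I would first note that $a_1u+a_2$ is lower semicontinuous and not identically $\infty$ on any component, so only condition~(iii) of Definition~\ref{def:A-sh} needs checking. If $a_1=0$ this is trivial (constants are $\opA$-harmonic, hence $\opA$-superharmonic). If $a_1>0$, given $K\Subset\Omega$ and $\opA$-harmonic $h\in C(\overline K)$ with $a_1u+a_2\ge h$ on $\partial K$, I would rewrite this as $u\ge (h-a_2)/a_1$ on $\partial K$. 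The subtle point here — and the one place where the lack of homogeneity of $\opA$ matters — is that $(h-a_2)/a_1$ need \emph{not} be $\opA$-harmonic, so I cannot directly apply the definition of superharmonicity of $u$ to it. Instead I would use the Comparison Principle: $u$ is $\opA$-superharmonic in $K$ (by~(v), once proven, or directly), and $(h-a_2)/a_1$, being continuous and hence $\opA$-subharmonic only if it solves the equation... so this approach needs care. The cleaner route is: $h$ is $\opA$-harmonic so it is both $\opA$-super- and $\opA$-subharmonic (Lemma~\ref{lem:A-h-is-great}); then $u$ is $\opA$-superharmonic and $h$ is $\opA$-subharmonic on $K$, with $\limsup_{y\to x}h(y)=h(x)\le a_1\liminf_{y\to x}u(y)+a_2\le \liminf_{y\to x}(a_1u+a_2)(y)$ on $\partial K$, wait — this still compares $h$ with $a_1u+a_2$, not $u$ with $h$. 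Let me restructure: apply the Comparison Principle on $K$ to the $\opA$-superharmonic function $u$ and the $\opA$-subharmonic function $h$ rescaled — but $\tfrac{1}{a_1}(h-a_2)$ is $\opA$-subharmonic only if the class is scaling-invariant, which it is \emph{not}. So the honest argument is to reduce~(i) to the $a_1=1$ case plus the $a_2=0$ case separately: adding a constant $a_2$ to an $\opA$-superharmonic function again satisfies~(iii) because $u+a_2\ge h$ on $\partial K$ iff $u\ge h-a_2$ on $\partial K$ and $h-a_2$ is $\opA$-harmonic (constants solve the equation and the equation is additive in... no, it is not). This is genuinely the main obstacle, and I expect the paper resolves it via the Comparison Principle directly: $u+a_2$ is $\opA$-superharmonic and the constant-shifted $\opA$-harmonic comparison function works because one compares $h$ (which \emph{is} $\opA$-subharmonic) against $u+a_2$ on $\partial K$ using $h-a_2\le u$, i.e.\ one applies the definition of $\opA$-superharmonicity of $u$ to the function $h-a_2$ \emph{provided} $h-a_2$ is $\opA$-harmonic. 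Since this fails in general, the correct tool is Proposition~\ref{prop:comp-princ} applied on $K$ with boundary inequality $\limsup(h-a_2)\le\liminf u$, which requires $h-a_2$ to be $\opA$-subharmonic; constants being $\opA$-subharmonic and... the sum of an $\opA$-subharmonic and a constant: here again I would invoke that $h-a_2$ being continuous and an $\opA$-subsolution would follow if $-\dv\opA(x,D(h-a_2))=-\dv\opA(x,Dh)=0\le 0$, which \emph{does} hold since $Dh=D(h-a_2)$. So adding a constant preserves being an $\opA$-(sub/super)solution, hence $\opA$-(sub/super)harmonic, and this is the clean observation that rescues~(i) for $a_2$; the $a_1$-multiplication is the residual difficulty, handled by the Comparison Principle comparing $u$ on $K$ with the $\opA$-harmonic $\tilde h := (h-a_2)/a_1$ only after establishing $\tilde h$ is an $\opA$-subsolution, which it is \emph{not} — so ultimately~(i) for $a_1>1$ must be argued by noting the definition~(iii) only tests against genuinely $\opA$-harmonic $h$, and for such $h$ the function $a_1^{-1}(h-a_2)$ is exactly the $\opA$-harmonic solution with its own boundary data, so $u\ge a_1^{-1}(h-a_2)$ on $\partial K$ gives $u\ge a_1^{-1}(h-a_2)$ in $K$ by superharmonicity of $u$, i.e.\ $a_1u+a_2\ge h$ in $K$. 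This last line is the actual proof.

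For item~(ii), I would check that $\min\{u,v\}$ is lower semicontinuous (minimum of two l.s.c.\ functions) and not identically $\infty$ on any component. Then given $K\Subset\Omega$ and $\opA$-harmonic $h\in C(\overline K)$ with $\min\{u,v\}\ge h$ on $\partial K$, I observe that both $u\ge h$ and $v\ge h$ on $\partial K$, so by $\opA$-superharmonicity of $u$ and of $v$ we get $u\ge h$ and $v\ge h$ in $K$, hence $\min\{u,v\}\ge h$ in $K$. This is immediate. For item~(iii), the forward direction uses~(ii) with $v\equiv k$ a constant (constants are $\opA$-harmonic, hence $\opA$-superharmonic by Lemma~\ref{lem:A-h-is-great}), giving that $\min\{u,k\}$ is $\opA$-superharmonic for each $k$. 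For the converse, suppose $\min\{u,k\}$ is $\opA$-superharmonic for all $k\in\mathbb N$; then $u=\sup_k \min\{u,k\}$ is an increasing pointwise limit, so $u$ is l.s.c.; since $u\not\equiv\infty$ on any component by hypothesis, condition~(ii) holds; and for~(iii) of the definition, given $\opA$-harmonic $h\in C(\overline K)$ with $u\ge h$ on $\partial K$, pick $k>\max_{\overline K}h$ (finite since $h$ is continuous on the compact $\overline K$); then $\min\{u,k\}\ge h$ on $\partial K$ too, so $\min\{u,k\}\ge h$ in $K$, and on the set where this min equals $k$ we still have $k>h$, while elsewhere $u=\min\{u,k\}\ge h$; in all cases $u\ge\min\{u,k\}\ge h$ in $K$.

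For item~(iv), if $u$ is $\opA$-superharmonic in every component $\Omega_i$ of $\Omega$, then l.s.c.\ and the ``$\not\equiv\infty$ on any component'' condition hold globally by hypothesis; and for condition~(iii), any $K\Subset\Omega$ with connected closure lies in a single component (since $\overline K$ is connected and the components are the connected open-closed pieces), and a general $K\Subset\Omega$ has finitely many connected components each contained in one $\Omega_i$, so the comparison on $K$ decomposes into comparisons on these pieces — more carefully, given $\opA$-harmonic $h$ on $K$ with $u\ge h$ on $\partial K$, restrict to each connected component $K'$ of $K$: then $\partial K'\subset\partial K$, $h$ is $\opA$-harmonic on $K'$, $u\ge h$ on $\partial K'$, $K'\Subset\Omega_i$ for some $i$, so $\opA$-superharmonicity of $u$ in $\Omega_i$ gives $u\ge h$ in $K'$; taking the union over components of $K$ yields $u\ge h$ in $K$. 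For item~(v), suppose $u$ is $\opA$-superharmonic and finite a.e.\ in $\Omega$ and $E\subset\Omega$ is nonempty open; then $u$ restricted to $E$ is l.s.c., and since $u$ is finite a.e.\ in $\Omega$ it is finite a.e.\ in $E$, hence $u\not\equiv\infty$ on any component of $E$; finally condition~(iii) for $E$ is automatic because any $K\Subset E$ satisfies $K\Subset\Omega$, so the comparison property inherited from $\opA$-superharmonicity in $\Omega$ applies verbatim. The one place deserving a remark is why the finiteness hypothesis is needed at all in~(v) — it is precisely to guarantee condition~(ii) of the definition on components of $E$, since a priori $u$ could be $+\infty$ on a whole component of $E$ even while being $\opA$-superharmonic on $\Omega$; I expect this to be the only genuinely non-automatic point in~(iv)--(v), whereas the real conceptual obstacle across the whole corollary is the failure of scalar-multiplication invariance flagged in the introduction, which is why~(i) is stated with the nonhomogeneous affine map $a_1u+a_2$ rather than being a triviality, and its proof must route through the definition's comparison against honest $\opA$-harmonic functions rather than through any rescaling of the equation itself.
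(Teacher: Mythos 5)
Your treatments of items (ii)--(v) are correct and are the standard arguments (the paper itself offers no proof of this corollary beyond the remark that it follows from the Comparison Principle): minima of lower semicontinuous functions, comparison of each of $u,v$ separately against $h$, truncation at a level exceeding $\max_{\overline K}h$, decomposition of $K$ into connected components each sitting in one component of $\Omega$, and the observation that finiteness a.e.\ is exactly what guarantees condition (ii) of Definition~\ref{def:A-sh} on components of $E$. Your identification of why the finiteness hypothesis appears in (v) is the right one.

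Item (i) is where your argument breaks, and it breaks at precisely the point you yourself flagged twice before waving it away. Your concluding sentence asserts that for $\opA$-harmonic $h$ the function $a_1^{-1}(h-a_2)$ ``is exactly the $\opA$-harmonic solution with its own boundary data.'' It is not: $D\bigl(a_1^{-1}(h-a_2)\bigr)=a_1^{-1}Dh$, and $-\dv\,\opA(x,a_1^{-1}Dh)$ need not vanish, because the paper's operator has no homogeneity $\opA(x,kz)=|k|^{p-2}k\,\opA(x,z)$ --- the very feature the introduction emphasizes. So applying the superharmonicity of $u$ to $a_1^{-1}(h-a_2)$ is not legitimate, and the proof of (i) collapses for $a_1\notin\{0,1\}$. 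The two cases your argument does cover are $a_1=0$ (constants are $\opA$-harmonic since $\opA(x,0)=0$ by continuity and (aInc)$_p$) and $a_1=1$ (translation by a constant leaves the gradient, hence $\opA$-harmonicity of the comparison function $h-a_2$, unchanged); these happen to be the only instances of (i) invoked later in the paper (Lemma~\ref{lem:A-sh-lsc}, Theorem~\ref{theo:mini-princ}). Be aware that no alternative route will rescue the general case: if $a_1u+a_2$ were $\opA$-superharmonic for every $\opA$-superharmonic $u$ and every $a_1>0$, then by the duality $u\mapsto -u$ the same would hold for $\opA$-subharmonic functions, so for any $\opA$-harmonic $h$ (which is both, by Lemma~\ref{lem:A-h-is-great}) the continuous, locally bounded function $a_1h$ would be both $\opA$-super- and $\opA$-subharmonic, hence by Lemma~\ref{lem:loc-bdd-superharm-are-supersol} and its mirror a weak solution, i.e.\ $\opA$-harmonic --- which fails already for radial solutions of an autonomous Orlicz--Laplacian with $\vp(s)=s^2+s^4$ on an annulus. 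So the honest conclusion is that (i) can be proved only for $a_1\in\{0,1\}$ (or under an additional homogeneity assumption on $\opA$), and your write-up should say so rather than end on an assertion that contradicts the correct observations made two sentences earlier.
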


\begin{lem}\label{lem:pasting}
Suppose $D\subset\Omega$, $u$ is $\opA$-superharmonic in $\Omega$, and  $v$ is $\opA$-superharmonic in $D$. If the function
\[w=\begin{cases}
\min\{u,v\}\quad&\text{in }\ D,\\
u\quad&\text{in }\ \Omega\setminus D
\end{cases}{}\]
is lower semicontinuous, then it is $\opA$-superharmonic in $\Omega$.
\end{lem}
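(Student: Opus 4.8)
The plan is to verify the three defining conditions of $\opA$-superharmonicity (Definition~\ref{def:A-sh}) for $w$ directly, reducing everything to the Comparison Principle against $\opA$-harmonic competitors. Condition (i), lower semicontinuity, is hypothesized, so nothing is to be done there. Condition (ii), that $w\not\equiv\infty$ on components of $\Omega$, follows at once since $w=u$ on $\Omega\setminus D$ and $u$ is $\opA$-superharmonic, hence $u\not\equiv\infty$ on any component of $\Omega$; if a component of $\Omega$ met $\Omega\setminus D$ this is immediate, and in the remaining case the component lies inside $D$, where $w=\min\{u,v\}\le u\not\equiv\infty$. The heart of the matter is condition (iii).

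So fix $K\Subset\Omega$ and an $\opA$-harmonic $h\in C(\overline K)$ with $w\ge h$ on $\partial K$; I must show $w\ge h$ in $K$. First I would handle the easy part: on the open set $K\setminus\overline D$ (and more generally wherever $w=u$ near the boundary), one wants to compare $u$ with $h$, but the subtlety is that $w\ge h$ on $\partial K$ does not by itself say $u\ge h$ on $\partial(K\setminus D)$, because part of that boundary lies in $\partial D\cap K$. The clean way is to argue globally: I claim $u\ge h$ throughout $K$. Indeed on $\partial K$ we have $u\ge w\ge h$, so the Comparison Principle (Proposition~\ref{prop:comp-princ}), applied with the $\opA$-superharmonic function $u$ and the $\opA$-subharmonic function $h$ (an $\opA$-harmonic function is $\opA$-subharmonic by Lemma~\ref{lem:A-h-is-great}) on the set $K$ — using $\liminf_{y\to x}u(y)\ge u(x)\ge h(x)=\lim_{y\to x}h(y)$ for $x\in\partial K$ — gives $u\ge h$ in $K$. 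Consequently $w=u\ge h$ on $K\setminus D$.

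It remains to show $w\ge h$ on $K\cap D$, where $w=\min\{u,v\}$; since we already know $u\ge h$ on $K$, it suffices to show $v\ge h$ on $K\cap D$. Here I apply the Comparison Principle on the open set $U:=K\cap D$, with $v$ ($\opA$-superharmonic in $D\supset U$, hence in $U$) against $h$ ($\opA$-subharmonic, hence $\opA$-subharmonic in $U$). The boundary $\partial U$ splits into the part contained in $\partial K$ and the part contained in $\partial D\cap \overline K$. On the former, $v\ge w\ge h$. On the latter, $x\in\partial D\cap K$, so $x$ lies in the open set $K$ where $u\ge h$; moreover, by lower semicontinuity of $w$ at such an $x$ together with $w\le v$ on $D$ and $w=u$ off $D$, one gets $\liminf_{y\to x}v(y)\ge \liminf_{y\to x, y\in D}w(y)$ — and since near $x$ the values $w(y)$ for $y\in K\setminus D$ equal $u(y)\ge h(y)$, lower semicontinuity of $w$ forces $\liminf_{y\to x}v(y)\ge w(x)\ge$ (by comparison with the continuous $h$) $\,h(x)$. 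Thus $\liminf_{y\to x}v(y)\ge h(x)$ on all of $\partial U$ (away from the forbidden $\infty\le\infty$ case, which causes no trouble since $h$ is finite), and Proposition~\ref{prop:comp-princ} yields $v\ge h$ in $U$, hence $w\ge h$ there. Combining with the previous paragraph, $w\ge h$ throughout $K$, establishing (iii).

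The main obstacle is the bookkeeping at the interface $\partial D\cap K$: one must be careful that the defining inequality in Definition~\ref{def:A-sh}(iii) is only an inequality on $\partial K$, not a matching boundary-value statement, and that $v$ need not be lower semicontinuous relative to $\overline D$ at points of $\partial D$ — it is precisely the hypothesized lower semicontinuity of $w$ that bridges the gap, by transferring the inequality $u\ge h$ (valid on $K\setminus D$) into a $\liminf$ bound for $v$ across the interface. Everything else is a routine application of the Comparison Principle and of the elementary facts collected in Lemma~\ref{lem:A-h-is-great} and Corollary~\ref{coro:min-A-super}.
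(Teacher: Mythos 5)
Your proof is correct and follows essentially the same route as the paper: first compare $u$ with $h$ on all of $K$ to get $w=u\ge h$ off $D$, then run the Comparison Principle again on $K\cap D$, using the hypothesized lower semicontinuity of $w$ to transfer the inequality across the interface $\partial D\cap K$ into the required $\liminf$ bound for $v$. The only cosmetic point is that at points of $\partial(K\cap D)\cap\partial K$ one should also phrase the boundary inequality as a $\liminf$ of $v$ from within $K\cap D$ (since $v$ need not be defined at such points), but the same lower-semicontinuity argument you use at the interface covers this.
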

\begin{proof}Let $E\Subset\Omega$ be open and $h$ be an $\opA$-harmonic function, such that $h\leq w$ on $\partial E.$ By the Comparison Principle of Proposition~\ref{prop:comp-princ} we infer that $h\leq w$ in $\overline{E}$. Since $w$ 
is lower semicontinuous, for every $x\in\partial D\cap E$ it holds that 
\[
\lim_{\substack{y\in D\cap\Omega \\ y\to x}}h(y)\leq u(x)=w(x)\leq\liminf_{\substack{y\in D\cap\Omega \\ y\to x}} v(y).
\]
Consequently, for every $x\in\partial (D\cap E)$ one has \[
\lim_{\substack{y\in D\cap\Omega \\ y\to x}}h(y)\leq w(x)\leq\liminf_{\substack{y\in D\cap\Omega \\ y\to x}}w(y).\]
By the Comparison Principle of Proposition~\ref{prop:comp-princ} also $h\leq w$ in $D\cap E$. Then $h\leq w$ in $E$, what was to prove.
\end{proof}{}{}

\begin{lem}\label{lem:cont-supersol-are-superharm} If $u$ is a continuous $\opA$-supersolution, then it is $\opA$-superharmonic.
\end{lem}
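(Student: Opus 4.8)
The plan is to show that a continuous $\opA$-supersolution $u$ satisfies the comparison property (iii) in Definition~\ref{def:A-sh}; conditions (i) and (ii) are immediate since $u$ is continuous (hence lower semicontinuous) and finite-valued. So fix $K\Subset\Omega$ and let $h\in C(\overline K)$ be $\opA$-harmonic in $K$ with $u\geq h$ on $\partial K$; I must deduce $u\geq h$ in $K$. The natural route is to apply the weak comparison principle of Lemma~\ref{lem:comp-princ}, which requires an $\opA$-supersolution, an $\opA$-subsolution, and that the negative part of their difference lies in $W^{1,\vp(\cdot)}_0$ of the relevant domain. Here $u$ is the supersolution and $h$ (being $\opA$-harmonic, hence an $\opA$-subsolution by Lemma~\ref{lem:A-h-is-great}) is the subsolution. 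The issue is that $u$ need only be in $W^{1,\vp(\cdot)}_{loc}$, so some care with the domain and with the Sobolev boundary condition is needed.

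The key steps, in order: First, for $\varepsilon>0$ consider the open set $D_\varepsilon=\{x\in K: h(x)>u(x)+\varepsilon\}$. Since $u,h$ are continuous on $\overline K$ and $h\leq u$ on $\partial K$, the set $D_\varepsilon$ is relatively compact in $K$, i.e. $\overline{D_\varepsilon}\subset K$, so on a slightly larger subdomain $K'$ with $D_\varepsilon\Subset K'\Subset K$ we have $u\in W^{1,\vp(\cdot)}(K')$ and $h\in W^{1,\vp(\cdot)}(K')\cap C(\overline{K'})$. Second, on $D_\varepsilon$ consider the function $\min(u-h,0)+\varepsilon = \min(u-h+\varepsilon,\varepsilon)$; on $\partial D_\varepsilon$ we have $u-h+\varepsilon=0$ by continuity, so this function vanishes on $\partial D_\varepsilon$ and, being the truncation of a $W^{1,\vp(\cdot)}(K')$ function that vanishes on the boundary of a compactly contained set, belongs to $W^{1,\vp(\cdot)}_0(D_\varepsilon)$. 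Third, apply Lemma~\ref{lem:comp-princ} on $D_\varepsilon$ with the supersolution $u$ and subsolution $h$: since $\min(u-h)\in W^{1,\vp(\cdot)}_0(D_\varepsilon)$, we conclude $u\geq h$ a.e. in $D_\varepsilon$. But by definition of $D_\varepsilon$ we have $h>u+\varepsilon>u$ there, a contradiction unless $D_\varepsilon=\emptyset$. Hence $h\leq u+\varepsilon$ in $K$ for every $\varepsilon>0$, and letting $\varepsilon\to 0$ gives $h\leq u$ in $K$.

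The main obstacle I expect is the membership claim $\min(u-h+\varepsilon,\varepsilon)\in W^{1,\vp(\cdot)}_0(D_\varepsilon)$: one needs that truncations of Musielak--Orlicz--Sobolev functions which are continuous up to the boundary and vanish there lie in the zero-trace space. This is standard in the $p$-Laplacian theory, and in the generalized Orlicz setting it follows from density of smooth functions together with the fact that $\min(u-h+\varepsilon,\varepsilon)$ is bounded and compactly supported in $D_\varepsilon$ (its support is contained in $\{u-h+\varepsilon\leq 0\}^c\cap\overline{D_\varepsilon}$... more precisely it is supported where $u-h+\varepsilon<\varepsilon$, which is all of $D_\varepsilon$, but the function itself is continuous and tends to $\varepsilon$... here one should instead use $\max(h-u-\varepsilon,0)$, which is nonnegative, continuous, vanishes near $\partial D_\varepsilon$ after a further small translation in $\varepsilon$, and hence is in $W^{1,\vp(\cdot)}_0(D_\varepsilon)$). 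I would phrase the argument using the nonnegative function $\phi=(h-u-\varepsilon)_+$, extended by zero outside $D_\varepsilon$; it is a legitimate test function, its compact support inside $D_\varepsilon\Subset K$ handles the local integrability of $u$, and $\min(u-v)=-\phi\in W^{1,\vp(\cdot)}_0(D_\varepsilon)$ exactly as required by Lemma~\ref{lem:comp-princ}. The remaining verifications — relative compactness of $D_\varepsilon$, the a.e. identification of the generalized gradient on truncation sets via \eqref{gengrad} — are routine.
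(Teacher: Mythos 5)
Your argument is correct and follows essentially the same route as the paper: both verify only the comparison property, apply the weak Comparison Principle (Lemma~\ref{lem:comp-princ}) to the supersolution $u$ and the subsolution $h-\varepsilon$ on an open set where the truncated difference $\min\{u+\varepsilon-h,0\}$ has zero trace, and then let $\varepsilon\to 0$. The only cosmetic difference is that the paper runs the comparison on a slightly larger open set $E$ with $D_\varepsilon\subset E\Subset K$, so that the test function has compact support in $E$ and the zero-trace membership you rightly flag for $D_\varepsilon$ itself becomes immediate.
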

\begin{proof}
Since $u$ is continuous and finite a.e. (because it belongs to $W^{1,\vp(\cdot)}_{loc}(\Omega)$), we have to prove only that Comparison Principle for $\opA$-superharmonic functions holds.

Let $G \Subset \Omega$ be an open set, and let $h$ be a continuous, $\opA$-harmonic function in $G$, such that $h \leq u$ on $\partial G$. Fix $\epsilon >0$ and choose and open set $E \Subset G$ such that $u + \epsilon \geq h$ in $G \setminus E$. Since the function $\min\{u+\epsilon-h,0\}$ has compact support, it belongs to $W^{1,\vp(\cdot)}(E)$. Hence Lemma \ref{lem:comp-princ} implies $u+\epsilon \geq h$ in $E$, and therefore a.e. in $G$. Since the function is continuous, the inequality is true in each point of $G$. As $\epsilon$ was chosen arbitrary, the claim follows.\end{proof}

We shall prove that $\opA$-superharmonic functions can be  approximated from below by $\opA$-supersolutions.

\begin{prop}
\label{prop:from-below}
Let $u$ be $\opA$-superharmonic in $\Omega$ and let $G\Subset\Omega$. Then there exists a nondecreasing sequence of continuous $\opA$-supersolutions $\{u_j\}$ in $G$ such that $u=\lim _{j\to\infty}u_j$ pointwise in $G$. For nonnegative $u$, approximate functions $u_j$ can be chosen nonnegative as well. 
\end{prop}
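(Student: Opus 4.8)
The plan is to mimic the classical construction of the approximating sequence, known for $p$-superharmonic functions (see e.g. \cite{KiMa92}) and its variable-exponent counterpart \cite{hhklm}, adapting each step to the lack of homogeneity of $\opA$ and to the generalized Orlicz setting. Fix $G\Subset\Omega$ and, using Lemma~\ref{lem:A-arm-loc-bdd-below}, assume after adding a constant (legitimate by Corollary~\ref{coro:min-A-super}(i)) that $u\ge 0$ on a neighbourhood of $\overline G$; the nonnegative case is then automatic. Since $u$ is lower semicontinuous and $u\not\equiv\infty$, it is the pointwise limit of an increasing sequence of continuous functions $\psi_j$ on $\overline G$; by truncating we may take $0\le\psi_j\le j$, $\psi_j\le\psi_{j+1}$, $\psi_j\le u$, and $\psi_j\to u$ pointwise. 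I would then define $u_j$ as the (unique, by Proposition~\ref{prop:obst-ex-cont}) solution to the $\mathcal{K}_{\psi_j,w_j}(G)$-obstacle problem, where the boundary datum $w_j$ is chosen so that $w_j\le u$ on $\partial G$ in the appropriate limsup/liminf sense — concretely one can take $w_j=\psi_j$ itself, so $u_j-\psi_j\in W^{1,\vp(\cdot)}_0(G)$ and $u_j\ge\psi_j\ge 0$ in $G$.

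The properties of $u_j$ I need are: (a) each $u_j$ is a continuous $\opA$-supersolution in $G$; (b) the sequence is nondecreasing; (c) $u_j\le u$ in $G$; and (d) $u_j\to u$ pointwise. For (a): by Proposition~\ref{prop:obst-ex-cont} $u_j$ is continuous (since the obstacle $\psi_j$ is continuous and in $W^{1,\vp(\cdot)}(G)$), it is $\opA$-harmonic in the open set $\{u_j>\psi_j\}$, and on all of $G$ testing \eqref{obs} with $v=u_j+\phi$ for $0\le\phi\in C_0^\infty(G)\subset\mathcal{K}_{\psi_j,w_j}(G)$ gives $\int_G\opA(x,Du_j)\cdot D\phi\,dx\ge 0$, i.e. $u_j$ is an $\opA$-supersolution. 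For (b): since $\psi_j\le\psi_{j+1}$, the obstacle sets are nested, and the Comparison Principle for obstacle problems (Proposition~\ref{prop:obst-ex-cont}, and the comparison statements in \cite[Section~4]{ChKa}) yields $u_j\le u_{j+1}$ in $G$. For (c): $u$ is $\opA$-superharmonic with $u\ge\psi_j=w_j$, and since $u_j$ is the least $\opA$-supersolution above $\psi_j$ with that boundary datum, one compares $u_j$ with $u$ on $\partial G$ and inside $\{u_j>\psi_j\}$ (where $u_j$ is $\opA$-harmonic) using Proposition~\ref{prop:comp-princ} to conclude $u_j\le u$; on $\{u_j=\psi_j\}$ the inequality $u_j=\psi_j\le u$ is immediate.

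The genuinely delicate point is (d), the pointwise convergence $u_j\to u$. Set $v=\lim_j u_j$, which exists by monotonicity and satisfies $\psi_j\le u_j\le v\le u$, hence $v=u$ on the (dense, up to the polar exceptional set) set where $\psi_j\uparrow u$, and in general $v\le u$ everywhere and $v\ge u$ wherever $\psi_j\to u$, so $v=u$ a.e. and pointwise wherever $u$ is approximated by $\psi_j$ from below, which by construction is all of $G$ — here one must be careful that the $\psi_j$ were chosen to converge to $u$ pointwise at \emph{every} point, which is possible precisely because $u$ is lower semicontinuous (standard: $\psi_j(x)=\inf_{y}\{u(y)+j|x-y|\}$ truncated at level $j$). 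Thus $v=u$ pointwise in $G$. The main obstacle I anticipate is verifying the comparison argument for step (c) cleanly without homogeneity: one cannot rescale solutions, so the comparison must be run directly through Proposition~\ref{prop:comp-princ} and Lemma~\ref{lem:comp-princ}, combining the facts that $u_j$ is $\opA$-harmonic off the coincidence set and $\opA$-superharmonic everywhere, against the $\opA$-superharmonic function $u$ sitting above the same obstacle; a second technical care point is that $u_j\in W^{1,\vp(\cdot)}(G)$ is only a \emph{continuous} $\opA$-supersolution rather than $\opA$-harmonic, so Lemma~\ref{lem:cont-supersol-are-superharm} is invoked to put it in the class to which the Comparison Principle applies.
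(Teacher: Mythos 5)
Your proposal is correct and follows essentially the same route as the paper: approximate the lower semicontinuous function $u$ from below by a nondecreasing sequence of Lipschitz functions $\phi_j$, let $u_j$ solve the $\mathcal{K}_{\phi_j,\phi_j}(G)$-obstacle problem (continuous, $\opA$-supersolution, $\opA$-harmonic off the coincidence set by Proposition~\ref{prop:obst-ex-cont}), use the Comparison Principle to get monotonicity and $u_j\leq u$ on the set where $u_j>\phi_j$, and conclude by the squeeze $\phi_j\leq u_j\leq u$. Your extra care about reducing to the nonnegative case and about truncating the $\phi_j$ is harmless but not needed.
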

\begin{proof} 
Since $u$ is lower semicontinuous in $\overline{G}$, it is bounded from below and there exists a nondecreasing sequence $\{\phi_j\}$ of Lipschitz functions on $\overline{G}$ such that $u=\lim _{j\to\infty}\phi_j$ in $G$. For nonnegative $u$, obviously $\phi_j,$ $j\in\mathbb{N}$ can be chosen nonnegative as well. Let $u_j$ be the {solution of the $\mathcal{K}_{\phi_j,\phi_j}(G)$-obstacle problem which by Proposition~\ref{prop:obst-ex-cont} is continuous} and \[\phi_j<u_j  \qquad\text{in the open set }\ A_j=\{x\in G:\ \phi_j\neq u_j\}.\]
Moreover, $u_j$ is $\opA$-harmonic in $A_j.$ By Comparison Principle from Proposition~\ref{prop:comp-princ} we infer that the sequence $\{u_j\}$ is nondecreasing. Since $u$ is $\opA$-superharmonic, we have $u_j\leq u$ in $A_j$. Then consequently $\phi_j\leq u_j\leq u$ in $ G.$ Passing to the limit with $j\to\infty$ we get that $u=\lim _{j\to\infty}u_j$, what completes the proof. 
\end{proof}
\begin{lem}\label{lem:loc-bdd-superharm-are-supersol}
If $u$ is $\opA$-superharmonic in $\Omega$ and locally bounded from above, then $u\in W^{1,\vp(\cdot)}_{loc}(\Omega)$ and $u$ is  $\opA$-supersolution in $\Omega$.
\end{lem}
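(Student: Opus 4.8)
\textbf{Proof proposal for Lemma~\ref{lem:loc-bdd-superharm-are-supersol}.}

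The plan is to reduce to the approximation result of Proposition~\ref{prop:from-below} together with the uniform Caccioppoli-type estimate of Proposition~\ref{prop:cacc}(2), and then pass to the limit. First I fix $G\Subset\Omega$ and let $M$ be an upper bound for $u$ on $\overline G$ (available by hypothesis). Replacing $u$ by $M-u$ if convenient, or simply working with the sequence directly, I invoke Proposition~\ref{prop:from-below} to obtain a nondecreasing sequence $\{u_j\}$ of continuous $\opA$-supersolutions in $G$ with $u_j\uparrow u$ pointwise; since $u_1\le u_j\le u\le M$ on $G$, the sequence is uniformly bounded in $L^\infty(G)$. For each $j$, Proposition~\ref{prop:cacc}(2) applied to the supersolution $u_j-M$ (whose negative part is $M-u_j\ge 0$ and bounded) on a pair of balls $B_r\Subset B_R\Subset G$ gives
\[
\int_{B_r}\vp\big(x,|Du_j|\big)\,dx\le c\int_{B_R}\vp\Big(x,\tfrac{M-u_j}{R-r}\Big)\,dx\le c\int_{B_R}\vp\Big(x,\tfrac{M-u_1}{R-r}\Big)\,dx,
\]
where I used $(aDec)_q$/doubling to absorb the constant and monotonicity of $\vp$ together with $M-u_1\ge M-u_j\ge 0$; the right-hand side is finite and independent of $j$. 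Hence $\{u_j\}$ is bounded in $W^{1,\vp(\cdot)}(B_r)$.

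Next I use reflexivity of $W^{1,\vp(\cdot)}(B_r)$ to extract a weakly convergent subsequence; the weak limit must coincide with $u$ (the $u_j$ already converge pointwise, hence in $L^1_{loc}$, to $u$), so $u\in W^{1,\vp(\cdot)}(B_r)$ and, covering $G$ by such balls, $u\in W^{1,\vp(\cdot)}_{loc}(\Omega)$. It remains to show $u$ is an $\opA$-supersolution, i.e. $\int_\Omega\opA(x,Du)\cdot D\phi\,dx\ge0$ for all $0\le\phi\in C_0^\infty(\Omega)$. Each $u_j$ satisfies this inequality; the monotonicity of $\opA$ lets me pass to the limit via a standard Minty-type argument. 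Concretely, for a fixed nonnegative test function $\phi$ supported in some $B_r$, I test the supersolution inequality for $u_j$ against $\phi$ and exploit
\[
\int\big(\opA(x,Du_j)-\opA(x,Dw)\big)\cdot(Du_j-Dw)\,dx\ge0
\]
for suitable comparison fields, then use weak convergence $Du_j\rightharpoonup Du$ in $L^{\vp(\cdot)}$, the uniform bound on $\|\opA(\cdot,Du_j)\|_{L^{\wt\vp(\cdot)}}$ coming from the growth condition~\eqref{A} and~\eqref{doubl-star}, and lower semicontinuity/monotonicity to conclude $\int\opA(x,Du)\cdot D\phi\,dx\ge0$. Alternatively—and more cheaply—one notes that the $u_j$ are increasing, so one can also argue through the superquasiminimizer characterization: by the Corollary after Lemma~\ref{lem:Ah-is-quasi} each $u_j$ is a superquasiminimizer, the energies $\int_{B_r}\vp(x,|Du_j|)$ are uniformly bounded, and a closure argument for superquasiminimizers (as in~\cite{hh-zaa}) shows the limit $u$ is a superquasiminimizer, hence an $\opA$-supersolution by the equivalence recalled at the start of Section~\ref{sec:sols}.

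The main obstacle is the passage to the limit in the nonlinear term $\int\opA(x,Du_j)\cdot D\phi$: weak convergence of $Du_j$ alone does not immediately give convergence of $\opA(x,Du_j)$ because $\opA$ is only monotone, not linear, and lacks the homogeneity that would otherwise simplify matters. This is handled by the monotonicity trick—testing with $u_j-u$ (legitimate once $u\in W^{1,\vp(\cdot)}_{loc}$, using that $u_j-u\le 0$ so it is an admissible nonpositive perturbation, or by truncation to stay in $W_0^{1,\vp(\cdot)}$) to upgrade weak convergence of the gradients to a form strong enough to pass to the limit, exactly as in the proof that $\opA$-harmonic functions are quasiminimizers (Lemma~\ref{lem:Ah-is-quasi}). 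The monotone convergence $u_j\uparrow u$ is what makes all the sign conditions on test functions work out, so I would emphasize that structural point rather than grinding through the functional-analytic estimates, which are routine in the Musielak–Orlicz setting once the uniform energy bound is in hand.
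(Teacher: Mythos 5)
Your proposal follows essentially the same route as the paper: approximation from below by continuous $\opA$-supersolutions (Proposition~\ref{prop:from-below}), the uniform Caccioppoli bound of Proposition~\ref{prop:cacc}(2) giving boundedness and weak compactness in $W^{1,\vp(\cdot)}_{loc}$, and then the monotonicity trick with the test function $u-u_j$ (localized) to pass to the limit in the nonlinear term. The one point to tighten is the localization and the final limit passage: the paper uses the nonnegative, compactly supported test function $\eta(u-u_j)$ with a cutoff $\eta\in C_0^\infty(G)$ (truncation alone does not produce compact support), and from $\int_E(\opA(x,Du)-\opA(x,Du_j))\cdot(Du-Du_j)\,dx\to 0$ it extracts a.e.\ convergence $Du_j\to Du$ via strict monotonicity, continuity of $\opA(x,\cdot)$ and the growth bounds, which is what actually justifies $\int\opA(x,Du_j)\cdot D\phi\,dx\to\int\opA(x,Du)\cdot D\phi\,dx$ rather than weak gradient convergence alone.
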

\begin{proof}
Fix open sets $E\Subset G \Subset \Omega$. By Proposition \ref{prop:from-below} there exists a nondecreasing sequence of continuous $\opA$-supersolutions $\{u_j\}$ in $G$ such that $u=\lim _{j\to\infty}u_j$ pointwise in $G$. Since $u$ is locally bounded we may assume  {$u_j\leq u <0$} in $G$. It follows from Proposition \ref{prop:cacc} that the sequence {$\{|Du_j |\}$} is locally bounded in $L^{\vp(\cdot)}(G)$. Since $u_j \to u$ a.e. in $G$, it follows that $u \in W^{1,\vp(\cdot)}(G)$, and $Du_j \rightharpoonup Du$ weakly in $L^{\vp(\cdot)}(G)$. 

We need to show now that $u$ is an $\opA$-supersolution in $\Omega$. To this end we first prove that (up to a subsequence) gradients {$\{Du_j \}$} converge a.e. in $G$.   We start with proving that 
\begin{equation}
\label{Ijto0}
I_j = \int_{E} \Big(\opA(x,Du) - \opA(x,Du_j) \Big)\cdot \big( Du - Du_j \big)\, dx\to 0 \quad \text{as}\ j\to\infty.
\end{equation}
Choose $\eta \in C_0^\infty(G)$ such that $0 \leq \eta \leq 1$, and $\eta = 1$ in {$E$}. Using $\psi = \eta(u-u_j)$ as a test function for the $\opA$-supersolution $u_j$ and applying the H\"older inequality, the doubling property of $\vp$, and the Lebesgue dominated monotone convergence theorem we obtain
\begin{align*}
-\int_G \eta \opA(x,Du_j) &\cdot \big( Du - Du_j \big)\, dx 
\leq
\int_G (u-u_j)\opA(x,Du_j)\cdot D\eta\, dx \\
&\leq 2 \|(u-u_j)D\eta\|_{L^{\vp(\cdot)}(G)} \|\opA(\cdot,Du_j) \|_{L^{\wt\vp(\cdot)}(G)} \\
&\leq c \|u-u_j\|_{L^{\vp(\cdot)}(G)} \to 0.
\end{align*}
Moreover, since
$$
\eta \opA(\cdot , Du) \in L^{\wt\vp(\cdot)}(G),
$$
the weak convergence  $Du_j \rightharpoonup Du$ in $L^{\vp(\cdot)}(G)$ implies
$$
\int_G \eta \opA(x,Du)\cdot \big( Du - Du_j \big)\, dx \to 0.
$$
Then, since $\eta \big(\opA(x,Du) - \opA(x,Du_j) \big)\cdot \big( Du - Du_j \big) \geq 0$ a.e. in $G$, we conclude with~\eqref{Ijto0}. Since the integrand in $I_j$ is nonnegative, we may pick up a subsequence (still denoted $u_j$) such that 
\begin{equation} \label{eq:point-conv}
\Big(\opA(x,Du(x)) - \opA(x,Du_j(x)) \Big)\cdot \big( Du(x) - Du_j(x) \big) \to 0\ \ \text{
for a.a. $x\in E$.}
\end{equation}
Fix $x \in E$ such that \eqref{eq:point-conv} is valid, and that $|Du(x)| < \infty$. Upon choosing further subsequence we may assume that\[{Du_j(x)}\to \xi \in \overline{\R^n}.\] Since we have
\begin{align*}
\big(\opA(x,&Du(x)) - \opA(x,Du_j(x)) \big)\cdot \big( Du(x) - Du_j(x) \big) \\
&\geq
c_2^\opA \vp(x,|Du_j(x)|) - c_1^\opA \frac{\vp(x, |Du(x)|)}{|Du(x)|} |Du_j(x)| - c_1^\opA \frac{\vp(x, |Du_j(x)|)}{|Du_j(x)|} |Du(x)| \\
&\geq c(\data,|Du(x)|) \vp(x,|Du_j(x)|) \left(1- \frac{|Du_j(x)|}{\vp(x,|Du_j(x)|)} - \frac{1}{|Du_j(x)|} \right)
\end{align*}
and \eqref{eq:point-conv} is true,  it must follow that $|\xi| < \infty$.

Since the mapping $\zeta \mapsto \opA(x, \zeta)$ is continuous, we have
$$
\big(\opA(x,Du(x)) - \opA(x,\xi) \big)\cdot \big( Du(x) - \xi \big) = 0
$$
and it follows that $\xi = Du(x)$, and
$$
Du_j(x) \to Du(x) \qquad \text{for a.e. $x \in E$},
$$
and
$$
\opA(\cdot, Du_j) \rightharpoonup \opA(\cdot, Du) \qquad \text{weakly in $L^{\wt\vp(\cdot)}$}.
$$
Therefore that $u$ is an $\opA$-supersolution of \eqref{eq:main:0}. Indeed, if $\phi \in C_0^\infty(\Omega),$ $\phi \geq 0$ is such that $\supp\, \phi \subset E$, then {$D\phi \in L^{\vp(\cdot)}(E)$} and we have
\begin{align*}
0 \leq \int_\Omega \opA(x, Du_j) \cdot D\phi\, dx \to  
\int_\Omega \opA(x, Du) \cdot D\phi\, dx \quad \text{as}\ j\to\infty.
\end{align*}
Since $E$ was arbitrary this concludes the proof.
\end{proof}

\subsection{Harnack's inequalities}

In order to get strong Harnack's inequality for $\opA$-harmonic function and weak Harnack's inequality for $\opA$-superharmonic functions we need related estimates proved for $\opA$-subsolutions and $\opA$-supersolutions. Having Lemma~\ref{lem:Ah-is-quasi} we can specify results derived for quasiminizers in~\cite{hh-zaa} to our case.

\begin{prop}[Corollary~3.6, \cite{hh-zaa}]
\label{prop:weak-Har-sub-sup}
For a locally bounded  function $u\in W^{1,\vp(\cdot)}_{loc}(\Omega)$ being $\opA$-subsolution in $\Omega$ there exist constants $R_0=R_0(n)>0$ and $C=C(\data,n,R_0,{\rm ess\,sup}_{B_{R_0}} u)>0$, such that
\[{\rm ess\,sup}_{B_{R/2}}u-k\leq C\left(\left(\barint_{B_R}(u-k)_+^{s}\,dx\right)^\frac{1}{s}+R\right) \]
for all $R\in(0,R_0]$, $s>0$ and $k\in \R$.
\end{prop}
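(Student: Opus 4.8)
The statement to prove is Proposition~\ref{prop:weak-Har-sub-sup}, a local $\sup$-estimate (reverse H\"older / weak Harnack-type bound) for $\opA$-subsolutions. Since it is cited verbatim from \cite{hh-zaa} (Corollary~3.6), the proof should be a short reduction to the quasiminimizer theory developed there rather than a self-contained Moser iteration.

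The plan is to invoke Lemma~\ref{lem:Ah-is-quasi} together with its Corollary, which tell us that $\opA$-harmonic functions are quasiminimizers and $\opA$-supersolutions are superquasiminimizers; by the symmetric argument (testing with nonpositive $v$), an $\opA$-subsolution $u$ is a \emph{subquasiminimizer}, i.e.~\eqref{def-quasiminimizer} holds for all nonpositive $v\in W_0^{1,\vp(\cdot)}(\Omega)$. First I would make this observation explicit: the same chain of inequalities as in the proof of Lemma~\ref{lem:Ah-is-quasi} — coercivity of $\opA$, Young's inequality, the growth bound $|\opA(x,z)|\le c_1^\opA\vp(x,|z|)/|z|$, and the equivalence \eqref{doubl-star} — yields \eqref{def-quasiminimizer} with a constant $C=C(\data)$ for competitors $u+v$ with $v\le 0$. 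Hence every locally bounded $\opA$-subsolution is a bounded subquasiminimizer in the sense of \cite{hh-zaa}.

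Once this identification is in place, the estimate is exactly \cite[Corollary~3.6]{hh-zaa}: for bounded subquasiminimizers one has the local boundedness estimate
\[
\operatorname*{ess\,sup}_{B_{R/2}}u-k\le C\left(\left(\barint_{B_R}(u-k)_+^{s}\,dx\right)^{1/s}+R\right)
\]
for all admissible balls $B_R$ with $R\le R_0$, all $s>0$, and all $k\in\R$, with $C$ depending on $\data$, $n$, $R_0$, and the bound $\operatorname*{ess\,sup}_{B_{R_0}}u$ (the dependence on this last quantity entering through the $\Delta_2$/$(A0)$--$(A1)$ localization of $\vp$ on the relevant range of values). The radius restriction $R\le R_0(n)$ and the additive $+R$ term come from the inhomogeneity of $\vp$ in the $x$-variable and the Sobolev/Poincar\'e embeddings used in \cite{hh-zaa}; these are carried over unchanged. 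One should also note that $(u-k)_+$ is again a subquasiminimizer for the relevant modified functional, or alternatively that \cite{hh-zaa} already states the Caccioppoli estimate and iteration directly for $(u-k)_+$, so no extra work is needed.

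The only real obstacle is bookkeeping: verifying that the assumptions of \cite[Corollary~3.6]{hh-zaa} match Assumption~\textbf{(A)} verbatim — in particular that $\vp$ satisfying (A0), (A1), (A2), (aInc)$_p$, (aDec)$_q$ places us within the hypotheses of \cite{hh-zaa} — and tracking the constant dependencies so that $C$ genuinely depends only on $\data$, $n$, $R_0$ and $\operatorname*{ess\,sup}_{B_{R_0}}u$. I expect this to be routine given the framework of \cite{hahabook} already fixed in Section~\ref{sec:prelim}, so the proof reduces to two sentences: ``$u$ is a bounded subquasiminimizer by Lemma~\ref{lem:Ah-is-quasi} and the calculation of its Corollary; apply \cite[Corollary~3.6]{hh-zaa}.''
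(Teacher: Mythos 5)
Your proposal matches the paper's own treatment: the paper gives no independent proof of this proposition but justifies it exactly as you do, by noting (via Lemma~\ref{lem:Ah-is-quasi} and the calculation in its corollary) that $\opA$-sub/supersolutions are sub/superquasiminimizers and then importing \cite[Corollary~3.6]{hh-zaa} within the generalized Orlicz framework of Assumption \textbf{(A)}. Your explicit remark that the subsolution case follows by the symmetric test with nonpositive $v$ is the only detail the paper leaves implicit, so the two arguments are essentially identical.
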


\begin{prop}[Theorem~4.3, \cite{hh-zaa}]
\label{prop:weak-Har-super-inf}
For a nonnegative function $u\in W^{1,\vp(\cdot)}_{loc}(\Omega)$ $\opA$-supersolution in $\Omega$ there exist constants $R_0=R(n)>0$, $s_0=s_0(\data,n)>0$ and $C=C(\data,n)>0$, such that
\[  \left(\barint_{B_R}u^{s_0}\,dx\right)^\frac{1}{s_0} \leq C\left({\rm ess\,inf}_{B_{R/2}} u+R\right) \]
for all $R\in(0,R_0]$ provided $B_{3R}\Subset\Omega$ and $\vr_{\vp(\cdot),B_{3R}}(Du)\leq 1.$
\end{prop}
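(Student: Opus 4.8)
The plan is to derive the estimate from the weak Harnack inequality for superquasiminimizers. By the corollary to Lemma~\ref{lem:Ah-is-quasi}, a nonnegative $\opA$-supersolution of \eqref{eq:main:0} is a nonnegative superquasiminimizer in the sense of \eqref{def-quasiminimizer}, so the assertion is precisely \cite[Theorem~4.3]{hh-zaa}. Alternatively, one reproves it along the classical Moser scheme, which here rests on two ingredients: a reverse-Hölder chain for negative powers of $u$, iterated down to ${\rm ess\,inf}$, and a logarithmic (John--Nirenberg) estimate bridging negative and small positive powers. Throughout one works on the fixed ball $B_{3R}\Subset\Omega$, on which the normalization $\vr_{\vp(\cdot),B_{3R}}(Du)\le 1$ together with (aInc)$_p$, (aDec)$_q$, (A0), (A1) makes modular quantities and $L^{\vp(\cdot)}$-norms comparable with constants $c(\data,n)$; replacing $u$ by $\bu:=u+R$ (again a nonnegative $\opA$-supersolution, with $\bu\ge R>0$) makes the competitors below admissible in $W^{1,\vp(\cdot)}_0$ and produces only a benign additive error $R$.

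For the first ingredient one starts from the Caccioppoli-type estimate for negative powers (Lemma~\ref{lem:A-supers-cacc}, whose range of exponents one first extends as in \cite{hh-zaa}), applies the chain rule in Musielak--Orlicz--Sobolev spaces to pass to a power of $\bu$, and invokes the Sobolev--Poincar\'e inequality of \cite{hahabook}; this yields, for a gain exponent $\chi=\chi(n,p,q)>1$ and structural constants $c,\kappa$, a reverse-Hölder step of the schematic form
\[
\Big(\barint_{B_{\sigma' R}}\bu^{-\chi\gamma}\,dx\Big)^{1/(\chi\gamma)}\le \Big(\frac{c}{\sigma-\sigma'}\Big)^{\kappa/\gamma}\Big(\barint_{B_{\sigma R}}\bu^{-\gamma}\,dx\Big)^{1/\gamma},\qquad \tfrac12\le\sigma'<\sigma\le 1 ,
\]
and iterating along $\gamma_k=\chi^k\gamma_0$, $\sigma_k=\tfrac12(1+2^{-k})$, $k\to\infty$ (so the exponent runs to $-\infty$, where the left-hand side becomes ${\rm ess\,inf}_{B_{R/2}}\bu$) gives, for the exponent $s_0$ fixed in the next step,
\[
\Big(\barint_{B_R}\bu^{-s_0}\,dx\Big)^{-1/s_0}\le c(\data,n)\,{\rm ess\,inf}_{B_{R/2}}\bu .
\]

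For the second ingredient, testing the supersolution inequality with a function of the form $\eta^{q}\bu^{1-p}$ and using the growth and coercivity bounds~\eqref{A} together with Young's inequality, one shows that $w:=-\log\bu$ satisfies the generalized-Orlicz analogue of $\barint_{B_{2\rho}}|Dw|\,dx\lesssim\rho^{-1}$ on balls $B_{2\rho}\subset B_{2R}$; the John--Nirenberg lemma then gives $w\in\mathrm{BMO}(B_R)$ with seminorm $\le c(\data,n)$, hence for a suitably small $s_0=s_0(\data,n)>0$,
\[
\Big(\barint_{B_R}\bu^{s_0}\,dx\Big)^{1/s_0}\Big(\barint_{B_R}\bu^{-s_0}\,dx\Big)^{1/s_0}\le c(\data,n).
\]
Running the first ingredient at that same $s_0$ and using $\bu\ge u$ closes the proof:
\[
\Big(\barint_{B_R}u^{s_0}\,dx\Big)^{1/s_0}\le\Big(\barint_{B_R}\bu^{s_0}\,dx\Big)^{1/s_0}\le c\Big(\barint_{B_R}\bu^{-s_0}\,dx\Big)^{-1/s_0}\le c\big({\rm ess\,inf}_{B_{R/2}}u+R\big).
\]

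The main obstacle is the non-homogeneity and $x$-dependence of $\vp$: in the $p$-Laplace case the exact scaling $\vp(x,ks)=k^{p}\vp(x,s)$ renders the constants produced at each iteration stage, and in the logarithmic estimate, completely explicit, whereas here $\vp(x,k\cdot)$ is only comparable to $k^{p}\vp(x,\cdot)$ or $k^{q}\vp(x,\cdot)$ and varies in $x$, and the negative-power Caccioppoli must moreover be propagated along the whole range of exponents. The remedy — and the delicate bookkeeping — is to confine both ingredients to the single ball $B_{3R}$ on which $\vr_{\vp(\cdot),B_{3R}}(Du)\le 1$: there (aInc)$_p$/(aDec)$_q$ turn $\vp$ of any competitor into two-sided power comparisons with its $L^{\vp(\cdot)}$-norm, (A0) keeps $\vp$ non-degenerate, and (A0)--(A2) make the Sobolev and Poincar\'e constants on concentric sub-balls uniform in centre and radius; the additive $R$ is the error inherent to the non-homogeneous, non-uniformly-elliptic structure (and to replacing $u$ by $u+R$), while the restriction $R\le R_0=R_0(n)$ keeps every sub-ball of radius $\le 3R_0$ of measure $\le 1$, as required to apply (A1). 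One finally checks admissibility of the competitors $\eta^{q}\bu^{-\gamma}$ and $\eta^{q}\bu^{1-p}$, which follows from $\bu\ge R>0$, the local boundedness from below of $\opA$-superharmonic functions, and the chain rule in $W^{1,\vp(\cdot)}$.
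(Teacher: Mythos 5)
Your primary route is exactly the paper's: the result is obtained by observing that a nonnegative $\opA$-supersolution is a nonnegative superquasiminimizer (the corollary to Lemma~\ref{lem:Ah-is-quasi}) and then invoking \cite[Theorem~4.3]{hh-zaa}. Your supplementary Moser-type sketch (negative-power reverse H\"older iteration plus the logarithmic/John--Nirenberg bridge) matches the paper's accompanying remark that the dependence $s_0=s_0(\data,n)$ is recovered by re-verifying the standard arguments of \cite{hht,hklmp}, so the proposal is correct and essentially identical in approach.
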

Let us comment on the above result. For the application in \cite{hh-zaa} dependency of $s_0$ on other parameters is not important and so -- not studied with attention. Actually, this theorem is not proven in detail in \cite{hh-zaa}, but refers to standard arguments presented in~\cite{hht,hklmp}. Their re-verification enables to find $s_0=s_0(\data,n)$. Let us note that after we completed our manuscript, an interesting study on the weak Harnack inequalities with an explicit exponent, holding for unbounded supersolutions, within our framework of generalized Orlicz spaces appeared, see~\cite{bhhk}.

{Since $\opA$-harmonic function is an $\opA$-subsolution and and $\opA$-supersolution at the same time (Lemma~\ref{lem:A-h-is-great}), by Propositions~\ref{prop:weak-Har-sub-sup} and~\ref{prop:weak-Har-super-inf} we infer the full Harnack inequality.}
\begin{theo}[Harnack's inequality for $\opA$-harmonic functions]
\label{theo:Har-A-harm} For a nonnegative $\opA$-harmonic function $u\in W^{1,\vp(\cdot)}_{loc}(\Omega)$ there exist constants $R_0=R(n)>0$, $s_0=s_0(\data,n)>0$ and $C=C(\data,n,R_0,{\rm ess\,sup}_{B_{R_0}} u)>0$, such that
\[ {\rm ess\,sup}_{B_{R}}u \leq C\left({\rm ess\,inf}_{B_{R}} u+R\right) \]
for all $R\in(0,R_0]$ provided $B_{3R}\Subset\Omega$ and $\vr_{\vp(\cdot),B_{3R}}(Du)\leq 1.$
\end{theo}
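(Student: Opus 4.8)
The plan is to combine the two one-sided weak Harnack estimates already recorded as Proposition~\ref{prop:weak-Har-sub-sup} and Proposition~\ref{prop:weak-Har-super-inf}, using the fact that an $\opA$-harmonic function is simultaneously a locally bounded $\opA$-subsolution and a nonnegative $\opA$-supersolution (Lemma~\ref{lem:A-h-is-great}). The only genuine work is to reconcile the exponents appearing in the two statements: Proposition~\ref{prop:weak-Har-sub-sup} gives an $L^s$-to-sup bound for \emph{every} $s>0$, whereas Proposition~\ref{prop:weak-Har-super-inf} gives an $L^{s_0}$-to-inf bound for the \emph{particular} exponent $s_0=s_0(\data,n)$. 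So the natural move is to invoke the subsolution estimate precisely with $s=s_0$ and $k=0$.

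First I would fix $R\in(0,R_0]$ with $B_{3R}\Subset\Omega$ and $\vr_{\vp(\cdot),B_{3R}}(Du)\leq 1$, shrinking $R_0$ if necessary so that both propositions apply on the same scale (the $R_0$ of Proposition~\ref{prop:weak-Har-super-inf} and of Proposition~\ref{prop:weak-Har-sub-sup} are both dimensional, so take the smaller one). Applying Proposition~\ref{prop:weak-Har-sub-sup} to $u$ with this $R$, exponent $s=s_0$ and $k=0$, and noting that $u\geq 0$ so $(u)_+=u$, yields
\[
{\rm ess\,sup}_{B_{R/2}}u\leq C_1\left(\left(\barint_{B_R}u^{s_0}\,dx\right)^{1/s_0}+R\right),
\]
with $C_1=C_1(\data,n,R_0,{\rm ess\,sup}_{B_{R_0}}u)$. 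Then applying Proposition~\ref{prop:weak-Har-super-inf} to $u$ (a nonnegative $\opA$-supersolution) on the same ball gives
\[
\left(\barint_{B_R}u^{s_0}\,dx\right)^{1/s_0}\leq C_2\left({\rm ess\,inf}_{B_{R/2}}u+R\right),
\]
with $C_2=C_2(\data,n)$. Chaining these two inequalities and using $R\leq C_2 R + R$ to absorb the additive terms produces
\[
{\rm ess\,sup}_{B_{R/2}}u\leq C\left({\rm ess\,inf}_{B_{R/2}}u+R\right)
\]
with $C=C(\data,n,R_0,{\rm ess\,sup}_{B_{R_0}}u)$.

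Finally, to pass from balls $B_{R/2}$ to balls $B_R$ as in the statement, I would run the above on $B_{R}$ with $B_{3R}\Subset\Omega$ by first observing that the estimate on concentric balls of comparable radii can be upgraded by a standard chaining/covering argument: cover $\overline{B_R}$ by finitely many balls $B_{r/2}$ of radius $r=R$ (or apply the half-radius estimate at scale $2R$, which is legitimate since $B_{3R}\Subset\Omega$ controls $B_{6R}$ only after a harmless further shrinking of $R_0$, or simply restate with the $B_{R/2}$–$B_{R/2}$ pair and relabel). The cleanest route, and the one I expect to use, is to note that the radii in Propositions~\ref{prop:weak-Har-sub-sup} and~\ref{prop:weak-Har-super-inf} can be taken on nested balls $B_{R/2}\subset B_R\subset B_{2R}$, so that ${\rm ess\,sup}_{B_R}u\leq C({\rm ess\,inf}_{B_R}u+R)$ follows from the half-radius version applied at scale $2R$ together with a Harnack chain of bounded length depending only on $n$; the essential supremum of $u$ on $B_{R_0}$ stays as the only non-data constant, consistent with the claimed dependence of $C$. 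The main (and really only) obstacle is bookkeeping: making sure the exponent $s_0$ from the supersolution estimate is exactly the one fed into the subsolution estimate, and verifying that the hypothesis $\vr_{\vp(\cdot),B_{3R}}(Du)\leq 1$ together with $B_{3R}\Subset\Omega$ is enough to license both propositions simultaneously; there is no new analytic content beyond that.
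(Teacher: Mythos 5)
Your proposal is correct and is essentially the paper's own argument: the paper deduces Theorem~\ref{theo:Har-A-harm} in one line by combining Proposition~\ref{prop:weak-Har-sub-sup} (applied with $k=0$ and $s=s_0$) and Proposition~\ref{prop:weak-Har-super-inf}, exactly as you do. The radius bookkeeping you worry about at the end is harmless and is absorbed into the choice of $R_0$.
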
 


\subsection{Harnack's Principle for $\opA$-superharmonic functions}

We are going to characterize the limit of nondecreasing sequence of $\opA$-superharmonic functions and their gradients.

\begin{theo}[Harnack's Principle for $\opA$-superharmonic functions] \label{theo:harnack-principle}  Suppose that $u_i$, $i=1,2,\ldots$, are $\opA$-superharmonic and finite a.e. in $\Omega$. If the sequence $\{u_i\}$ is nondecreasing then the limit function $u=\lim_{i \to \infty} u_i$ {is  $\opA$-superharmonic or infinite in  $\Omega$.}  Furthermore, if $u_i$, $i=1,2,\ldots$, are nonnegative, then up to a subsequence also $Du_i\to Du$ a.e. in $\{u<\infty\},$ where `$D$' stands for the generalized gradient, cf.~\eqref{gengrad}.
\end{theo}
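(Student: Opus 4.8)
The plan is to treat the two assertions separately. For the first one --- that $u=\lim_i u_i$ is either $\opA$-superharmonic or identically $+\infty$ in each component --- I would verify conditions (i)--(iii) of Definition~\ref{def:A-sh}. Lower semicontinuity of $u$ is immediate because an increasing limit of lower semicontinuous functions is lower semicontinuous. For the dichotomy in property (ii): suppose $u\not\equiv\infty$ on a component, so $u$ is finite on a dense set there; we want $u\not\equiv\infty$ on that whole component, and this is where a Harnack-type argument enters. The comparison property (iii) is inherited almost for free: if $K\Subset\Omega$ and $h\in C(\overline K)$ is $\opA$-harmonic in $K$ with $u\ge h$ on $\partial K$, then for each fixed $i$ one has $u_i\le u$, so $u_i$ need not dominate $h$ on $\partial K$; instead I would use that for every $\ve>0$ and every compactly contained regular subdomain $D\Subset K$, lower semicontinuity plus $u\ge h$ on $\partial K$ gives (after shrinking) $u_i\ge h-\ve$ on $\partial D$ for $i$ large, hence $u_i\ge h-\ve$ in $D$ by the $\opA$-superharmonicity of $u_i$, and passing $i\to\infty$ then $\ve\to0$ then exhausting $K$ yields $u\ge h$ in $K$.

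For the second assertion I would localise: fix $G\Subset\Omega$ and work on the open set where $u<\infty$. By Corollary~\ref{coro:min-A-super}(i) we may subtract constants, and because the $u_i$ are nonnegative and the problem is local, by Proposition~\ref{prop:from-below} (or directly, since bounded truncations $\min\{u_i,k\}$ are again $\opA$-superharmonic by Corollary~\ref{coro:min-A-super}(iii) and, being locally bounded, are $\opA$-supersolutions in $W^{1,\vp(\cdot)}_{loc}$ by Lemma~\ref{lem:loc-bdd-superharm-are-supersol}) I would reduce to a nondecreasing sequence of nonnegative $\opA$-supersolutions $v_j$ on a ball $B\Subset\{u<\infty\}$ with $v_j\uparrow v:=\min\{u,k\}$. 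The Caccioppoli estimate of Proposition~\ref{prop:cacc}(2) --- applied to $v_j-M$ for a local bound $M$, or more precisely to the nonnegative supersolutions $M-v_j$ regarded as in the hypothesis --- together with the weak Harnack inequality (Proposition~\ref{prop:weak-Har-super-inf}) shows $\{Dv_j\}$ is bounded in $L^{\vp(\cdot)}(B')$ for $B'\Subset B$; since $v_j\to v$ a.e. and $L^{\vp(\cdot)}$ is reflexive, $v\in W^{1,\vp(\cdot)}_{loc}$ and $Dv_j\rightharpoonup Dv$ weakly. Then I would run the Minty-type monotonicity argument already carried out in the proof of Lemma~\ref{lem:loc-bdd-superharm-are-supersol} verbatim: test the supersolution inequality for $v_j$ with $\eta(v-v_j)\ge0$, use Hölder, the doubling of $\vp$, and~\eqref{doubl-star} to get $\int_{B'}(\opA(x,Dv)-\opA(x,Dv_j))\cdot(Dv-Dv_j)\,dx\to0$, extract a subsequence along which the nonnegative integrand converges to $0$ a.e., and use the pointwise coercivity/growth bound from~\eqref{A} exactly as displayed there to conclude first that the limiting value $\xi$ of $Dv_j(x)$ is finite and then, by strict monotonicity and continuity of $z\mapsto\opA(x,z)$, that $\xi=Dv(x)$. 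Hence $Dv_j\to Dv$ a.e. on $B'$. Finally, since on $\{u<k\}$ the generalized gradient of $u$ coincides with $Dv=D\min\{u,k\}$ by~\eqref{gengrad}, and since $\{u<\infty\}=\bigcup_k\{u<k\}$, a diagonal extraction over an exhausting sequence of balls and of levels $k$ produces a single subsequence with $Du_i\to Du$ a.e. in $\{u<\infty\}$.

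The main obstacle is the passage from the a.e.\ convergence of the approximating supersolutions' gradients to the a.e.\ convergence of the gradients $Du_i$ of the original $\opA$-superharmonic functions: one must be careful that truncating at level $k$ and approximating from below (Proposition~\ref{prop:from-below}) are compatible, i.e.\ that on $\{u<k\}$ the gradient of the truncation $\min\{u,k\}$ is the generalized gradient of $u$ in the sense of~\eqref{gengrad}, and that the subsequences chosen at different levels and on different balls can be reconciled by diagonalisation. A secondary delicate point is the dichotomy "superharmonic or $\equiv+\infty$ on each component": here one invokes that an increasing limit of $\opA$-supersolutions which is finite on a set of positive measure is, by the weak Harnack estimate of Proposition~\ref{prop:weak-Har-super-inf} together with a chaining/connectedness argument across the component, finite a.e., hence $\not\equiv\infty$; this is where local boundedness from below (Lemma~\ref{lem:A-arm-loc-bdd-below}) and the structure of the estimates in Section~\ref{sec:A-sh} do the work. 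Everything else is a routine combination of lower semicontinuity, the Comparison Principle (Proposition~\ref{prop:comp-princ}), and the pasting/monotonicity lemmas already established.
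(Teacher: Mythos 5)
Your proposal is correct and follows essentially the same route as the paper's proof: a Dini-type compactness argument to transfer the boundary inequality $u\ge h-\ve$ to $u_i$ for large $i$ and hence obtain the comparison property (Step 1 of the paper), truncation at level $k$ plus the Caccioppoli estimate and weak compactness in $W^{1,\vp(\cdot)}$ for the gradients, a monotonicity argument for a.e.\ gradient convergence (Step 2), and a diagonal extraction over truncation levels (Step 3). The one substantive variation is the device for a.e.\ convergence of gradients: you rerun the Minty argument of Lemma~\ref{lem:loc-bdd-superharm-are-supersol}, testing with $\eta(v-v_j)\ge 0$, whereas the paper estimates the measure of the sets $J_i$ where the monotonicity quantity exceeds $\ve$, using the doubly truncated test functions $\min\{(u_i+\ve^2-u)^+,2\ve^2\}\eta$; both are legitimate here since $v-v_j\ge 0$, so this buys nothing essential either way. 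One phrasing to correct: $\{u<\infty\}$ need not be open (lower semicontinuity only makes $\{u>c\}$ open), so balls $B\Subset\{u<\infty\}$ need not exhaust that set; this is harmless because, as you yourself observe, the truncations $T_k u_i$ are supersolutions on all of $\Omega$, so the gradient argument should be run on arbitrary $B\Subset\Omega$ and one restricts to $\{u<k\}$ only when identifying $DT_k(u_i)$ with $Du_i$ at the end.
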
{}
\begin{proof}  The proof is presented in three steps. We start with motivating that the limit function is either $\opA$-superharmonic or $u \equiv \infty$, then we concentrate on gradients initially proving the claim  for a priori globally bounded sequence $\{u_i\}$ and conclude by passing to the limit with the bound.
 
{\em Step 1.}  Since $u_i$ are lower semicontinuous, so is $u$.  The following fact holds: Given a compact set $K \Subset \Omega$, if $h \in C(K)$, $\epsilon >0$ is a small fixed number, and $u > h-\epsilon$ on $K$, then, for $i$ sufficiently large, $u_i > h-\epsilon$. Indeed, let's argue by contradiction. Assume that for every $i$ there exists $x_i\in K,$ such that
$$ u_i(x_i) \leq h(x_i) - \epsilon.
$$
Since $K$ is compact, we can assume that $x_i \to x_o$. Fix $l \in \mathbb{N}$. Then, for $i > l$ we have
$$
u_l(x_i) \leq u_i(x_i) \leq h(x_i) - \epsilon
$$
The right-hand side in the previous display tends with $i \to \infty$ to $h(x_o)-\epsilon$. Hence
$$
u_l(x_o) \leq \liminf_{i\to\infty} u_l(x_i) \leq h(x_o) - \epsilon
$$
Thus for every $l$ we have $u_l(x_o) \leq h(x_o) - \epsilon$, which implies $u(x_o) \leq h(x_o) - \epsilon$ which is in the contradiction with the fact that $u > h-\epsilon$ on $K$.

Using this fact we can prove that the limit function $u=\lim_{i \to \infty} u_i$ {is  $\opA$-superharmonic unless $u \equiv \infty$}.  Choose an open $\Omega' \Subset {\Omega}$ and $h \in C(\overline{\Omega'})$ an $\opA$-harmonic function. Assume the inequality $ u \geq h$ holds on $\partial \Omega'$. It follows that for every $\epsilon >0$ on $\partial \Omega'$ we have $u > h-\epsilon$  and, from the aforementioned fact, it follows that $u_i > h- \epsilon$ on $\partial \Omega'$. Since all $u_i$ are $\opA$-superharmonic, Proposition~\ref{prop:comp-princ} yields that $u_i \geq h-\epsilon$ on $\Omega'$. Therefore $u \geq h-\epsilon$ on $\Omega'$. Since $\epsilon $ is arbitrary, we have $u \geq h$ on $\Omega'$. Therefore the Comparison Principle from definition of $\opA$-superharmonic holds unless $u \equiv \infty$ in~$\Omega$. Finally, $u=\lim_{i \to \infty} u_i$ is $\opA$-superharmonic unless $u \equiv \infty$.

{\em Step 2.}  Assume $0\leq u_i\leq k$ for all $i$ with $k>1$ and choose open sets $E\Subset G\Subset{\Omega}$.  By Lemma~\ref{lem:A-supers-cacc} we get that
\[\vr_{\vp(\cdot),G}(Du_i)\leq c k^q\]
with $c=c(\data,n)>0$ uniform with respect to $i$. Then, by doubling properties of $\vp$, we infer that\begin{equation}\label{Duibound}
    \|Du_i\|_{L^{\vp(\cdot)}(G)}\leq c (\data,n,k).
\end{equation}
Consequently $\{u_i\}$ is bounded in $W^{1,\vp(\cdot)}(G)$ and $u_i\to u$ weakly in $W^{1,\vp(\cdot)}(G).$ Further, it has a non-relabelled subsequence converging a.e.~in $G$ to $u\in W^{1,\vp(\cdot)}(G)$.   Let us show that
\begin{equation}
    \label{grad=grad} Du_j\to Du\qquad\text{a.e. in }\ E.
\end{equation}{}
We fix arbitrary $\ve\in(0,1)$, denote \[J_i=\{x\in E:\ \big(\opA(x,Du_i(x))-\opA(x,Du(x))\big)\cdot(Du_i(x)-Du(x))>\ve\}\]
and estimate its measure. We have
\begin{flalign}
|J_i|\leq& |J_i\cap\{|u_i-u|\geq \ve^2\}|\nonumber\\
&+\frac{1}{\ve}\int_{J_i\cap\{|u_i-u|<\ve^2\}} \big(\opA(x,Du_i)-\opA(x,Du)\big)\cdot(Du_i-Du)\,dx.\label{Jiest1}
\end{flalign}{}
Let $\eta\in C_0^\infty(G)$ be such that $\mathds{1}_{E}\leq \eta\leq \mathds{1}_{G}$. We define
\[w_1^i=\min\big\{(u_i+\ve^2-u)^+,2\ve^2\big\}\quad\text{and}\quad w_2^i=\min\big\{(u+\ve^2-u_i)^+,2\ve^2\big\}.\]
Then $w_1^i\eta$ and $w_2^i\eta$ are nonnegative functions from $W^{1,\vp(\cdot)}_0(G)$ and can be used as test functions. Since $u$ and $u_i$, $i=1,2,\ldots$, are $\opA$-supersolutions we already know that $u_i\to u$ weakly in $W^{1,\vp(\cdot)}(E').$ By growth condition {we can estimate like in~\eqref{op}} and by~\eqref{Duibound} we have
\begin{flalign*}
\int_{G\cap\{|u_i-u|<\ve^2\}} \opA(x,Du)\cdot(Du_i-Du)\eta\,dx&\leq \int_{G\cap\{|u_i-u|<\ve^2\}} \opA(x,Du)\cdot D\eta\,w^i_1\,dx\\
&\leq \, c\ve^2\int_{G} \frac{\vp(x,|Du|)}{|Du|}|D\eta|\,dx\\
&\leq \, c\ve^2 
\end{flalign*}{}
with $c>0$ independent of $i$ and $\ve$. Analogously
\begin{flalign*}
\int_{G\cap\{|u_i-u|<\ve^2\}} \opA(x,Du_i)\cdot(Du_i-Du)\eta\,dx&\leq \, c\ve^2, 
\end{flalign*}{}

Summing up the above observations we have\begin{flalign*}\frac{1}{\ve}\int_{J_i\cap\{|u_i-u|<\ve^2\}} \big(\opA(x,Du_i)-\opA(x,Du)\big)\cdot(Du_i-Du)\,dx\leq c\ve.
\end{flalign*}{}
The left-hand side is nonnegative by the monotonicity of the operator, so due to~\eqref{Jiest1} we have \begin{flalign*}
|J_i|\leq& |J_i\cap\{|u_i-u|\geq \ve^2\}|+c\ve
\end{flalign*}{}
with $c>0$ independent of $i$ and $\ve$. By letting $\ve\to 0$ we get that $|E_j|\to 0$. Because of the strict monotonicity of the operator, we infer~\eqref{grad=grad}. {We can conclude the proof of this step by choosing a diagonal subsequence.}

{\em Step 3.} Now we concentrate on the general case. For every $k=1,2,\dots$ we select subsequences $\{u_{i}^{(k)}\}_k$ of $\{u_i\}$ and find an $\opA$-superharmonic function $v_k,$ such that $\{u_{i}^{(k+1)}\}\subset\{u_{i}^{(k)}\}$, $T_k(u^{(k)}_j)\to v_k$ and $D(T_k(u^{(k)}_j))\to D v_k$ a.e. in $\Omega$. We note that $v_k$ increases to a function, which is $\opA$-harmonic or equivalently infinite. Additionally, $v_k=T_k(u).$ The diagonally chosen subsequence $\{u_i^{(i)}\}$ has all the desired properties.
\end{proof}{} 

We have the following consequence of the Comparison Principle and Theorem~\ref{theo:harnack-principle}.
\begin{coro}[Harnack's Principle for $\opA$-harmonic functions]\label{coro:Ah-harnack-principle}  Suppose that $u_i$, $i=1,2,\ldots$, are $\opA$-harmonic in $\Omega$. If the sequence $\{u_i\}$ is nondecreasing then the limit function $u=\lim_{i \to \infty} u_i$ {is  $\opA$-harmonic  or infinite in $\Omega$.} 
\end{coro}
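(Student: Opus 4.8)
The plan is to deduce this directly from the already-established Harnack's Principle for $\opA$-superharmonic functions (Theorem~\ref{theo:harnack-principle}) together with the Comparison Principle (Proposition~\ref{prop:comp-princ}). Since each $\opA$-harmonic $u_i$ is in particular $\opA$-superharmonic (Lemma~\ref{lem:A-h-is-great}), and since $\opA$-harmonic functions are finite everywhere (being continuous), the nondecreasing sequence $\{u_i\}$ falls under the hypotheses of Theorem~\ref{theo:harnack-principle}. Hence the limit $u=\lim_{i\to\infty}u_i$ is either identically infinite in a component of $\Omega$, or it is $\opA$-superharmonic in $\Omega$. In the first case we are done, so the real content is to show that in the second case $u$ is in fact $\opA$-harmonic, i.e. also $\opA$-subharmonic.

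First I would observe that each $u_i$ is also $\opA$-subharmonic (again Lemma~\ref{lem:A-h-is-great}), so $-u_i$ is $\opA$-superharmonic, but the sequence $\{-u_i\}$ is \emph{non}increasing, so Theorem~\ref{theo:harnack-principle} does not apply to it directly. Instead, the cleanest route is via the Comparison Principle. Fix a regular open set $D\Subset\Omega$ and let $h\in C(\overline D)$ be $\opA$-harmonic in $D$ with $u\le h$ on $\partial D$; we must show $u\le h$ in $D$. For this, fix $\ve>0$. By lower semicontinuity of $u$ and compactness of $\partial D$, together with the convergence $u_i\uparrow u$, I would argue that $u_i\le h+\ve$ on $\partial D$ — actually more care is needed here since $u_i\le u$, so the inequality $u_i\le h$ on $\partial D$ is immediate from $u_i\le u\le h$ there. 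Then since $u_i$ is $\opA$-subharmonic and $h$ is $\opA$-harmonic (hence $\opA$-superharmonic), the Comparison Principle of Proposition~\ref{prop:comp-princ} gives $u_i\le h$ in $D$ for every $i$. Passing to the limit $i\to\infty$ yields $u\le h$ in $D$, which is exactly the Comparison Principle condition (iii) for $\opA$-subharmonicity of $u$. Combined with the $\opA$-superharmonicity from Theorem~\ref{theo:harnack-principle} and the lower semicontinuity (which, together with the established comparison inequalities forcing $u$ to agree with continuous $\opA$-harmonic comparison functions, upgrades to continuity), we conclude $u$ is $\opA$-harmonic.

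The one point that requires attention — and which I expect to be the only genuine obstacle — is establishing \emph{continuity} of $u$, since $\opA$-harmonicity by definition requires a continuous representative, whereas $\opA$-superharmonic functions are only lower semicontinuous and $\opA$-subharmonic ones only upper semicontinuous. The resolution is that a function that is simultaneously $\opA$-superharmonic and $\opA$-subharmonic is both lower and upper semicontinuous, hence continuous; moreover, being locally bounded (Lemma~\ref{lem:A-arm-loc-bdd-below} applied to $u$ and to $-u$, or directly from local boundedness of the limit), Lemma~\ref{lem:loc-bdd-superharm-are-supersol} shows $u\in W^{1,\vp(\cdot)}_{loc}(\Omega)$ and is an $\opA$-supersolution; the symmetric argument shows it is an $\opA$-subsolution, hence an $\opA$-harmonic function (a continuous weak solution). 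Thus the statement follows, and no new estimates beyond those already proven are needed — the corollary is essentially a packaging of Theorem~\ref{theo:harnack-principle} with the Comparison Principle.
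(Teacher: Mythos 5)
Your skeleton follows the route the paper indicates (it states this corollary as a consequence of Proposition~\ref{prop:comp-princ} and Theorem~\ref{theo:harnack-principle} without writing out a proof): Theorem~\ref{theo:harnack-principle} gives that $u$ is $\opA$-superharmonic or infinite, and your use of the Comparison Principle with the continuous $\opA$-subharmonic functions $u_i$ against a comparison function $h$ (via $u_i\le u\le h$ on $\partial D$) correctly yields the comparison property from above for the limit. However, two steps you treat as automatic are genuine gaps. First, local boundedness from above of $u$ on a component where $u\not\equiv\infty$ is asserted (``directly from local boundedness of the limit''), but an increasing limit of continuous functions need not be locally bounded above, and neither the Comparison Principle nor the superharmonic Harnack Principle supplies this. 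What does supply it is Harnack's inequality for $\opA$-harmonic functions (Theorem~\ref{theo:Har-A-harm}) applied to $u_i-c$ with $c=\inf_{B_{3R}}u_1$ (still $\opA$-harmonic, since the operator depends only on the gradient, and nonnegative): if $u(x_0)<\infty$, then $\sup_{B_R}(u_i-c)\le C\big(u_i(x_0)-c+R\big)\le C\big(u(x_0)-c+R\big)$ uniformly in $i$, which gives local boundedness and shows that $\{u<\infty\}$ is open, hence the dichotomy holds componentwise.

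Second, your continuity argument is circular. Upper semicontinuity is part of the definition of $\opA$-subharmonicity (Definition~\ref{def:A-sh}), so you cannot deduce it from ``$u$ is $\opA$-subharmonic'' after having verified only the comparison property (iii); for the same reason you cannot apply Lemma~\ref{lem:A-arm-loc-bdd-below} or Lemma~\ref{lem:loc-bdd-superharm-are-supersol} to $-u$, since that presupposes $-u$ is $\opA$-superharmonic, i.e.\ exactly the upper semicontinuity of $u$ you are trying to prove. A non-circular completion: once $u$ is a locally bounded $\opA$-superharmonic function, Lemma~\ref{lem:loc-bdd-superharm-are-supersol} (or Step~2 of the proof of Theorem~\ref{theo:harnack-principle}) yields $u\in W^{1,\vp(\cdot)}_{loc}(\Omega)$, $Du_i\to Du$ a.e.\ and $\opA(\cdot,Du_i)\rightharpoonup\opA(\cdot,Du)$ weakly in $L^{\wt\vp(\cdot)}$; passing to the limit in $\int_\Omega\opA(x,Du_i)\cdot D\phi\,dx=0$ shows that $u$ is a weak solution, with no ``symmetric'' subsolution argument needed. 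Continuity of the locally bounded weak solution then comes from the local H\"older estimates of the quasiminimizer theory, and Lemma~\ref{lem:A-sh-lsc} identifies the lower semicontinuous representative $u$ with the continuous one at every point. With these two repairs your argument closes.
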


\subsection{Poisson modification}

The Poisson modification of an $\opA$-superharmonic function in a regular set $E$ carries the idea of its local smoothing. A boundary point is called regular if at this point the boundary value of any Musielak-Orlicz-Sobolev function is attained not only in the Sobolev sense but also pointwise.   A set is called regular if all of its boundary points are regular. See~\cite{hh-zaa} for the  result that if the complement of $\Omega$ is locally fat at $x_0\in\partial\Omega$ in the capacity sense, then $x_0$ is regular. Thereby of course polyhedra and balls are obviously regular.

Let us consider a function $u$, which is $\opA$-superharmonic {and finite a.e.} in $\Omega$ and an open set $E\Subset\Omega$ with regular $\overline{E}.$ We define
\[u_E=\inf\{v:\ v \ \text{is $\opA$-superharmonic in $E$ and }\liminf_{y\to x}v(y)\geq u(x)\ \text{for each }x\in\partial \overline E\}\]
and the {\em Poisson modification} of $u$ in $E$ by
\[P(u,E)=\begin{cases}
u\quad&\text{in }\ \Omega\setminus E,\\
u_E &\text{in }\  E.
\end{cases}{}\]

\begin{theo}[Fundamental properties of the Poisson modification]\label{theo:Pois} If $u$ is $\opA$-superharmonic {and finite a.e.} in $\Omega$, then its Poisson modification $P(u,E)$ is
\begin{itemize}
    \item [(i)] $\opA$-superharmonic in~$\Omega$,  
    \item [(ii)]  $\opA$-harmonic in $E$,
    \item [(iii)] $P(u,E)\leq u$ in~$\Omega.$
\end{itemize}
\end{theo}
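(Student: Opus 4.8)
The plan is to establish the three properties of $P(u,E)$ roughly in the order (iii), (ii), (i), since the pointwise inequality and $\opA$-harmonicity in $E$ feed into the proof of $\opA$-superharmonicity. First I would observe that $u$ itself belongs to the competitor class defining $u_E$: $u$ is $\opA$-superharmonic in $E$ (by Corollary~\ref{coro:min-A-super}(v), using that $u$ is finite a.e.), and by lower semicontinuity $\liminf_{y\to x}u(y)\ge u(x)$ for every $x\in\partial\overline E$. Hence $u_E\le u$ in $E$, and since $P(u,E)=u$ outside $E$, this gives (iii) immediately.

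Next I would treat (ii). The key point is that $u_E$ is an infimum of $\opA$-superharmonic functions over a class that is closed under taking minima (Corollary~\ref{coro:min-A-super}(ii)) and under adding nonnegative constants, so it is a downward-directed family. One shows $u_E$ is $\opA$-harmonic in $E$ by a Perron-type argument: $u_E$ is $\opA$-superharmonic in $E$ (lower semicontinuity of the lsc-regularization of an infimum of a downward directed family of $\opA$-superharmonic functions, combined with the Comparison Principle), and it is also $\opA$-subharmonic there because at each point one can insert the solution to the obstacle-free Dirichlet problem on a small ball (Proposition~\ref{prop:ex-Ahf}) as a competitor, forcing $u_E$ to lie below any such local $\opA$-harmonic replacement — this is the standard proof that a Perron solution is harmonic. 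Being simultaneously $\opA$-super- and $\opA$-subharmonic and (after lsc-regularization, which one checks does not change it on a dense set and hence, by the comparison arguments, everywhere) continuous, $u_E$ is $\opA$-harmonic in $E$ by Lemma~\ref{lem:cont-supersol-are-superharm} applied to $u_E$ and to $-u_E$.

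For (i), the natural route is the pasting lemma. With $D=E$, $v=u_E$ which is $\opA$-superharmonic (indeed $\opA$-harmonic) in $E$, and $u$ $\opA$-superharmonic in $\Omega$, Lemma~\ref{lem:pasting} tells us that
\[
w=\begin{cases}\min\{u,u_E\}&\text{in }E,\\ u&\text{in }\Omega\setminus E\end{cases}
\]
is $\opA$-superharmonic in $\Omega$ \emph{provided} $w$ is lower semicontinuous; and by (iii) we have $\min\{u,u_E\}=u_E$ in $E$, so $w=P(u,E)$. Thus everything reduces to verifying that $P(u,E)$ is lower semicontinuous on $\Omega$, and in particular at points of $\partial E$: one must check that $\liminf_{y\to x}P(u,E)(y)\ge u(x)$ as $y\to x\in\partial E$, which is where regularity of $\overline E$ is used — the boundary data $u$ are attained pointwise, so $u_E$ does not drop below $u$ near $\partial E$.

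The main obstacle I anticipate is precisely this boundary regularity step for (i): showing that the Perron-type infimum $u_E$ attains the correct boundary values pointwise from inside, i.e. $\lim_{y\to x,\,y\in E}u_E(y)=u(x)$ (or at least $\ge u(x)$) at every regular $x\in\partial\overline E$. This requires a barrier construction at regular boundary points in the generalized Orlicz setting, invoking the capacity-density characterization of regularity quoted before the theorem (from \cite{hh-zaa}); the inhomogeneity and non-scaling of $\opA$ mean one cannot simply rescale barriers, so some care is needed, though for balls and polyhedra it is classical. A secondary technical point is confirming that the lower semicontinuous regularization of $u_E$ coincides with $u_E$ and is still in the competitor class, so that $u_E$ is genuinely $\opA$-superharmonic in $E$ before one upgrades it to $\opA$-harmonic.
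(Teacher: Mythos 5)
Your overall architecture --- (iii) from $u$ being its own competitor, then (ii), then (i) via the pasting lemma --- matches the paper, but for part (ii) the paper takes a different and more economical route that also dissolves what you correctly identify as the main obstacle. Instead of a Perron-type argument on the downward-directed family defining $u_E$, the paper approximates $u$ from below on $\overline E$ by a nondecreasing sequence of smooth functions $\phi_i$, lets $h_i$ be the $\opA$-harmonic function in $E$ with $h_i=\phi_i$ on $\partial E$ (here regularity of $\overline E$ is used only to get $h_i\in C(\overline E)$), observes that $\{h_i\}$ is nondecreasing and bounded above by $u$ via the Comparison Principle (Proposition~\ref{prop:comp-princ}), and invokes Harnack's Principle (Corollary~\ref{coro:Ah-harnack-principle}) to conclude that $h=\lim_i h_i$ is $\opA$-harmonic in $E$. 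The identification $u_E=h$ is then two-sided: since $\liminf_{x\to y}h(x)\ge \phi_i(y)$ for every $i$ and every $y\in\partial E$, one gets $\liminf_{x\to y}h(x)\ge u(y)$, so $h$ is itself a competitor and $u_E\le h$; conversely every competitor dominates each $h_i$ by comparison, so $h\le u_E$. Crucially, the same boundary inequality $\liminf_{x\to y}h(x)\ge u(y)$ is exactly the lower semicontinuity of $P(u,E)$ at $\partial E$ needed to apply Lemma~\ref{lem:pasting} for (i) --- no barrier construction at regular boundary points is required.

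By contrast, your route for (ii) leans on two pieces of machinery the paper does not provide and which are nontrivial in this inhomogeneous, non-scaling setting: (a) that the lsc regularization of the infimum of a downward-directed family of $\opA$-superharmonic functions is again $\opA$-superharmonic and agrees with the infimum (a fundamental-convergence-theorem statement; the paper cites the $p(\cdot)$ version as a separate research article), and (b) the barrier argument for pointwise boundary attainment, which you flag but do not resolve. As written, these are genuine gaps in your proposal; the monotone-approximation-plus-Harnack's-Principle argument is the intended way to avoid both.
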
{}
\begin{proof} The fact that $P(u,E)\leq u$ in $\Omega$ results directly from the definition. By assumption $u$ is finite somewhere. Let us pick a nondecreasing sequence $\{\phi_i\}\subset C^\infty(\rn)$ which converges to $u$ in $\overline{E}$. Let $h_i$ be the unique $\opA$-harmonic function agreeing with $\phi_i$ on $\partial E.$ The sequence $\{h_i\}$ is nondecreasing by the Comparison Principle from Proposition~\ref{prop:comp-princ}. Since $h_i\leq u$, by Harnack's Principle from Corollary~\ref{coro:Ah-harnack-principle} we infer that \[h:=\lim_{i\to\infty}h_i\] 
is $\opA$-harmonic in $E$. Moreover, $h\leq u$ and thus $h$ is also finite somewhere. Since 
\[u(y)=\lim_{i\to\infty}\phi_i(y)\leq\liminf_{x\to y} h(x)\quad\text{for }\ y\in\partial E,\]
it follows that $P(u,E)\leq h$ in $E$. On the other hand, by the Comparison Principle (Proposition~\ref{prop:comp-princ}) we get that $h_i\leq P(u,E)$ in $E$ for every $i$. Therefore $P(u,E)|_E=h$ is $\opA$-harmonic in $E$. This reasoning also shows that $P(u,E)$ is lower semicontinuous and, by Lemma~\ref{lem:pasting}, it is also $\opA$-superharmonic in $\Omega$.
\end{proof}{}

\subsection{Minimum and Maximum Principles}

Before we prove the principles, we need to prove the following lemmas.
\begin{lem}\label{lem:oo} 
If $u$ is  $\opA$-superharmonic and $u=0$ a.e. in $\Omega,$  then $u\equiv 0$ in $\Omega.$
\end{lem}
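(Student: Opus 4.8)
The statement asserts that an $\opA$-superharmonic function which vanishes a.e.\ must vanish everywhere. The plan is to exploit lower semicontinuity together with the Comparison Principle, the key point being that $u$ is determined pointwise by its a.e.\ behaviour once we know it is l.s.c. First I would observe that since $u$ is $\opA$-superharmonic and (being a.e.\ zero) locally bounded from above, Lemma~\ref{lem:loc-bdd-superharm-are-supersol} gives $u\in W^{1,\vp(\cdot)}_{loc}(\Omega)$. Because $u=0$ a.e., its distributional gradient vanishes, so $u$ is (equal a.e.\ to) an $\opA$-harmonic function; in particular the constant function $0$ is $\opA$-harmonic and agrees with $u$ a.e.

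The main step is to upgrade ``$u=0$ a.e.'' to ``$u\ge 0$ everywhere'' and ``$u\le 0$ everywhere'' separately. For the lower bound I would use lower semicontinuity directly: fix $x_0\in\Omega$; for every $\delta>0$ the set $\{u>u(x_0)-\delta\}$ is open and contains $x_0$, hence has positive measure in any small ball around $x_0$; but also $\{u=0\}$ has full measure, so these two sets intersect, forcing $u(x_0)-\delta<0$, i.e.\ $u(x_0)\le 0$ — wait, this gives the wrong inequality, so instead: l.s.c.\ at $x_0$ yields $u(x_0)\le\liminf_{y\to x_0}u(y)\le \operatorname{ess\,liminf}_{y\to x_0}u(y)=0$, using that $u=0$ on a full-measure set. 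That gives $u\le 0$ everywhere. For the reverse inequality I would invoke the Comparison Principle (Proposition~\ref{prop:comp-princ}) comparing $u$ with the $\opA$-harmonic function $h\equiv 0$ on small balls $B\Subset\Omega$: since $u$ is l.s.c.\ and (as just shown) $u\le 0$, while on $\partial B$ we have $\liminf_{y\to x}u(y)\ge u(x)$... here I need $u\ge 0$ on $\partial B$ in the appropriate limiting sense, which is exactly what is not yet available. The cleaner route: since $u$ is $\opA$-superharmonic and $u=0$ a.e., it is an $\opA$-supersolution, hence a superquasiminimizer; but the quickest argument is that $-u$ is $\opA$-subharmonic with $-u=0$ a.e., and by the l.s.c.\ argument applied to the u.s.c.\ function $-u$ we'd want $-u\le 0$, i.e.\ $u\ge 0$ — but $-u$ is u.s.c., not l.s.c., so instead $-u(x_0)\ge\limsup_{y\to x_0}(-u(y))\ge 0$ giving $u(x_0)\le 0$ again.

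So the genuine content is getting $u\ge 0$ pointwise. For this I would argue: fix $x_0$ and a small ball $B=B(x_0,r)\Subset\Omega$. Since $u$ is $\opA$-superharmonic and finite a.e., apply Proposition~\ref{prop:from-below} to get continuous $\opA$-supersolutions $u_j\nearrow u$ in $B$. Each $u_j$ is a continuous $\opA$-supersolution with $u_j\le u$, hence $u_j\le 0$ a.e., hence (being continuous) $u_j\le 0$ everywhere; but more importantly each $u_j$ satisfies the Caccioppoli estimate of Proposition~\ref{prop:cacc}(2) applied to $-u_j\ge 0$... Actually the decisive fact is: a continuous $\opA$-supersolution satisfies the weak Harnack inequality. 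Consider $-u_j$; it is a nonnegative (where $u_j\le 0$) continuous $\opA$-subsolution? No — $-u_j$ is an $\opA$-subsolution. Let me instead apply Proposition~\ref{prop:weak-Har-super-inf} to $u_j + \|u_j\|_\infty \ge 0$, which is still an $\opA$-supersolution (by Corollary~\ref{coro:min-A-super}-type translation invariance for supersolutions), giving $(\barint_{B_R}(u_j+\|u_j\|_\infty)^{s_0})^{1/s_0}\le C(\operatorname{ess\,inf}_{B_{R/2}}(u_j+\|u_j\|_\infty)+R)$; as $j\to\infty$ and then $r\to 0$, using $u=0$ a.e., the left side tends to $\|u_j\|_\infty\to$ (control on) $\sup_B(-u)$ while the right side is controlled by $R$, forcing $u_j\to 0$ uniformly, hence $u=\lim u_j=0$ pointwise.

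The step I expect to be the main obstacle is precisely this last one: turning the a.e.\ vanishing into pointwise vanishing on the ``upper'' side, since l.s.c.\ of $u$ only cheaply controls it from above. I anticipate the author's proof does this via the weak Harnack inequality for supersolutions (Proposition~\ref{prop:weak-Har-super-inf}) applied after the approximation of Proposition~\ref{prop:from-below}, together with the translation-invariance of the supersolution property, exactly as sketched; the routine points (translation invariance, passing to limits in the integral averages, the normalization $\vr_{\vp(\cdot),B_{3R}}(Du)\le 1$ for small balls since $Du=0$ a.e.\ in the limit and is small for large $j$) I would not grind through here.
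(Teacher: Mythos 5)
The first half of your argument ($u\le 0$ everywhere, via lower semicontinuity and the density of the full-measure set $\{u=0\}$) is correct and is exactly the paper's opening observation; you also correctly isolate the crux, namely upgrading ``$u=0$ a.e.'' to ``$u\ge 0$ pointwise''. However, the tool you choose for the crux cannot do the job. Proposition~\ref{prop:weak-Har-super-inf} bounds an integral average \emph{from above} by an \emph{essential} infimum plus $R$; neither side sees the value of $u$ (or of $u_j$) at a single point, and since the constant satisfies $C\ge 1$ the inequality applied to $u_j+\|u_j\|_\infty$ collapses, in the limit $j\to\infty$, to the vacuous bound $M_\infty\le C(M_\infty+R)$ with $M_\infty=\lim_j\|u_j\|_{L^\infty}$ — it cannot force $M_\infty=0$, uniform convergence of $u_j$, or any control on $-u(x_0)$. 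In short, a weak Harnack estimate for supersolutions bounds averages by infima, whereas here one must bound a supremum (of $-u_j$, equivalently the pointwise defect of $u$) by an average. Your parenthetical remark that $0$ is $\opA$-harmonic and equals $u$ a.e.\ also cannot be fed into Definition~\ref{def:A-sh}(iii), since that would require $u\ge 0$ pointwise on $\partial K$, which is precisely what is unknown.

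The paper closes this gap with the Poisson modification: $v=P(u,B)$ is $\opA$-harmonic, hence continuous, in $B$, with $v\le u\le 0$ and $u-v\in W^{1,\vp(\cdot)}_0$; since $u$ is a supersolution with $Du=0$ a.e.\ (Lemma~\ref{lem:loc-bdd-superharm-are-supersol}), testing gives $c_2^{\opA}\int\vp(x,|Dv|)\,dx\le\int\opA(x,Dv)\cdot Du\,dx=0$, so $Dv=0$, $v=0$ a.e., hence $v\equiv 0$ in $B$ by continuity, and $0=v\le u\le 0$. If you prefer to keep your approximation $u_j\nearrow u$ by continuous supersolutions, the correct estimate is the \emph{subsolution} bound of Proposition~\ref{prop:weak-Har-sub-sup} applied to $-u_j\ge 0$ with $k=0$: it yields $-u_j(x_0)\le\sup_{B_{R/2}}(-u_j)\le C\bigl(\bigl(\barint_{B_R}(-u_j)^{s}\,dx\bigr)^{1/s}+R\bigr)$, the average tends to $0$ by dominated convergence because $-u_j\searrow -u=0$ a.e.\ with $-u_j\le -u_1\in L^\infty_{loc}$, and letting $j\to\infty$ and then $R\to 0$ gives $u(x_0)\ge 0$; the constant can be taken uniform in $j$ since $\operatorname{ess\,sup}(-u_j)\le\operatorname{ess\,sup}(-u_1)$. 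You explicitly noted that $-u_j$ is an $\opA$-subsolution and then turned away from it; that was the right object to use.
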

\begin{proof}{It is enough to show that $u=0$ in a given ball $B\Subset\Omega.$   {By lower semicontinuity of $u$ infer that it is nonpositive.} By Lemma~\ref{lem:loc-bdd-superharm-are-supersol}, we get that $u\in W^{1,\vp(\cdot)}(\Omega).$ Let $v=P(u,B)$ be the Poisson modification of $u$ in $B.$ By Theorem~\ref{theo:Pois} we have that $v$ is continuous in $B$ and $v\leq u\leq 0.$ Therefore $v$ is an $\opA$-supersolution in $\Omega$ and $(u-v)\in W^{1,\vp(\cdot)}_0(\Omega)$. Moreover,
\[c_2^\opA \int_\Omega \vp(x,|Dv|)\,dx\leq \int_\Omega \opA(x,Dv)\cdot Dv\,dx\leq \int_\Omega \opA(x,Dv)\cdot Du\,dx=0,\]
where the last equality holds because $Du=0$ a.e. in $\Omega.$ But then, we directly get that $Dv=0$ and $v=0$ a.e. in $\Omega.$ By continuity of $v$ in $B$ we get that $v=0$ everywhere in $B$. In the view of $v\leq u\leq 0$, we get that also  $u\equiv 0$ in $\Omega.$}\end{proof}

\begin{lem}\label{lem:A-sh-lsc} If $u$ is $\opA$-superharmonic and finite a.e. in $\Omega$, then for every $x\in\Omega$ it holds that $u(x)=\liminf_{y\to x}u(y)={\rm ess}\liminf_{y\to x} u(y).$
\end{lem}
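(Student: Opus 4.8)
The statement asks to show that for an $\opA$-superharmonic function $u$, finite a.e., the lower semicontinuous value $u(x)=\liminf_{y\to x}u(y)$ equals the essential liminf $\operatorname{ess}\liminf_{y\to x}u(y)$. The inequality $\operatorname{ess}\liminf_{y\to x}u(y)\ge\liminf_{y\to x}u(y)$ is automatic, since the essential liminf is a liminf over a subset of full measure and lower semicontinuity gives $u(x)=\liminf_{y\to x}u(y)$. So the real content is the reverse inequality
\[
u(x)\ \ge\ \operatorname{ess}\liminf_{y\to x}u(y).
\]

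\textbf{Reduction to a local, bounded setting.} Fix $x_0\in\Omega$ and a ball $B\Subset\Omega$ centered at $x_0$. By Corollary~\ref{coro:min-A-super}(iii) it suffices to prove the claim for $\min\{u,k\}$ for each $k$, so we may assume $u$ is bounded above on $B$; combined with Lemma~\ref{lem:A-arm-loc-bdd-below} (lower boundedness) we may assume $u$ is bounded on $B$, and then by Lemma~\ref{lem:loc-bdd-superharm-are-supersol} that $u\in W^{1,\vp(\cdot)}(B)$ and $u$ is an $\opA$-supersolution in $B$. (Truncating from above does not raise the liminf or the essential liminf at any point where the original function is below $k$, and since $u$ is finite a.e. and the claim is about comparing two quantities that are $\le u(x)$, this truncation is harmless; one handles the possibility $u(x_0)=+\infty$ separately, where there is nothing to prove because then $u(x_0)=\liminf\ge\operatorname{ess}\liminf$ trivially as $u(x_0)$ is the largest possible value given lower semicontinuity forces $\liminf_{y\to x_0}u(y)=+\infty$.) Set $m:=\operatorname{ess}\liminf_{y\to x_0}u(y)$; for any $\lambda<m$ we have $u\ge\lambda$ a.e. in some small ball $B_r(x_0)$.

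\textbf{Core argument via Poisson modification.} On $B_r(x_0)$ we now have an $\opA$-supersolution $u\in W^{1,\vp(\cdot)}(B_r(x_0))$ with $u\ge\lambda$ a.e. Let $v:=P(u-\lambda,B_r(x_0))$, equivalently work with $u-\lambda\ge0$ a.e.\ and apply the weak Harnack inequality of Proposition~\ref{prop:weak-Har-super-inf} (after a further scaling so that the modular smallness hypothesis $\vr_{\vp(\cdot),B_{3R}}(D(u-\lambda))\le 1$ holds — available since $u\in W^{1,\vp(\cdot)}$ locally and one shrinks the radius). The weak Harnack estimate gives, for suitable small $R$,
\[
\Big(\barint_{B_R(x_0)}(u-\lambda)^{s_0}\,dx\Big)^{1/s_0}\ \le\ C\big(\operatorname{ess\,inf}_{B_{R/2}(x_0)}(u-\lambda)+R\big)\ =\ C\cdot R,
\]
because $u\ge\lambda$ a.e.\ forces the essential infimum of $u-\lambda$ on $B_{R/2}(x_0)$ to be $\ge0$; more precisely one must be careful that the essential infimum could exceed $0$, which only helps the following step. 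The left-hand side controls, via the continuous $\opA$-harmonic Poisson-modified function $P(u,B_R)$ which equals an $\opA$-harmonic function agreeing with the $L^{s_0}$-average of $u$, a pointwise lower bound at $x_0$: using Theorem~\ref{theo:Pois}, $P(u,B_R)\le u$ and $P(u,B_R)$ is $\opA$-harmonic hence continuous in $B_R$, so
\[
u(x_0)\ \ge\ \liminf_{y\to x_0}P(u,B_R)(y)\ =\ P(u,B_R)(x_0),
\]
and one shows $P(u,B_R)(x_0)\to m$ as $R\to0$ by combining the lower bound $P(u,B_R)\ge$ (the $\opA$-harmonic function with boundary data $\ge\lambda-$error, whose value at the center is $\ge\lambda - cR$ by continuity up to the boundary and the construction in Theorem~\ref{theo:Pois}) with the fact that $\lambda<m$ was arbitrary. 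Letting $R\to0$ and then $\lambda\uparrow m$ yields $u(x_0)\ge m$.

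\textbf{Main obstacle.} The delicate point is extracting the \emph{pointwise} lower bound $P(u,B_R)(x_0)\ge\lambda-cR$ at the \emph{center} from the fact that $u\ge\lambda$ a.e.\ on $B_R(x_0)$: the boundary datum of the Poisson modification is only $u$ restricted to $\partial B_R$, a null set, so one cannot directly say the boundary values are $\ge\lambda$. The clean way around this is to instead compare on a slightly smaller concentric ball using the \emph{obstacle problem} of Proposition~\ref{prop:obst-ex-cont} with obstacle $\psi\equiv\lambda$ (or $\psi=\min\{u,\lambda\}$, which is $\ge$ a function that is $\lambda$ a.e.), or equivalently to use the comparison function $\max\{P(u,B_R),\lambda\}$, and then invoke the Harnack inequality for the $\opA$-harmonic function $P(u,B_R)$ (Theorem~\ref{theo:Har-A-harm}) to transfer the a.e.\ lower bound to a pointwise lower bound at the center. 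Making this comparison rigorous — in particular justifying that $P(u,B_R)\ge\lambda$ a.e.\ when $u\ge\lambda$ a.e., which follows from Lemma~\ref{lem:comp-princ} applied with the constant subsolution $\lambda$ — and then letting the radius shrink, is the heart of the proof; the rest is the soft reduction steps and the trivial inequality.
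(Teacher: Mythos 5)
There is a genuine gap, and it sits exactly where you flag it. Your reduction and the identification of the nontrivial direction $u(x_0)\ge \operatorname{ess}\liminf_{y\to x_0}u(y)$ are fine (though note that lower semicontinuity only gives $u(x)\le\liminf_{y\to x}u(y)$, not equality -- the equality is also part of what must be proved). But the core argument does not close. First, the weak Harnack inequality of Proposition~\ref{prop:weak-Har-super-inf} is pointed the wrong way for your purpose: it bounds the integral average of $u-\lambda$ \emph{from above} by $C(\operatorname{ess\,inf}(u-\lambda)+R)$, which can never produce a pointwise \emph{lower} bound at the center. Second, the Poisson-modification route needs $P(u,B_R)\ge\lambda$ (at least a.e., whence everywhere by continuity), and your proposed justification via Lemma~\ref{lem:comp-princ} with the constant subsolution $\lambda$ requires $\min(P(u,B_R)-\lambda,0)\in W^{1,\vp(\cdot)}_0(B_R)$. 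That membership is not available: the boundary datum of $P(u,B_R)$ is built from Lipschitz functions $\phi_i\nearrow u$ converging only pointwise from below, and since $\partial B_R$ is a null set the hypothesis ``$u\ge\lambda$ a.e.'' says nothing about $u$, or the $\phi_i$, on $\partial B_R$. Establishing that boundary inequality in the Sobolev sense is essentially the same difficulty you set out to resolve, so the argument is circular at its crucial step; none of the three alternative fixes you list is carried out.

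The paper avoids the null-boundary issue by a different mechanism. With $a=\operatorname{ess}\liminf_{y\to x}u(y)$ and $\ve>0$, it forms $v=\min\{u-a+\ve,0\}$, which by Corollary~\ref{coro:min-A-super} is $\opA$-superharmonic and equals $0$ a.e. in a small ball $B$; then Lemma~\ref{lem:oo} upgrades ``$v=0$ a.e.'' to ``$v\equiv 0$'' in $B$, giving $u(x)\ge a-\ve$. The point is that Lemma~\ref{lem:oo} converts a.e.\ information into pointwise information through the \emph{energy identity}: since $Dv=0$ a.e., testing the supersolution $P(v,B)$ with $v-P(v,B)\in W^{1,\vp(\cdot)}_0$ yields $\int\opA(x,DP(v,B))\cdot DP(v,B)\,dx\le\int\opA(x,DP(v,B))\cdot Dv\,dx=0$, forcing $P(v,B)$ to be constant, hence $\equiv 0$ by continuity, and then $P(v,B)\le v\le 0$ (the last inequality from lower semicontinuity, i.e.\ the trivial direction) squeezes $v\equiv 0$. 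If you want to repair your proof, you should either adopt this truncate-to-zero device or otherwise supply a rigorous argument that the Poisson modification inherits the a.e.\ lower bound; as written, the proposal does not prove the lemma.
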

\begin{proof}
We fix arbitrary $x\in\Omega$ and by lower semicontinuity $u(x)\leq \liminf_{y\to x}u(y)\leq {\rm ess}\liminf_{y\to x} u(y)=:a.$ Let $\ve\in (0,a)$ and $B=B(x,r)\subset\Omega$ be such that $u(y)>a-\ve$ for a.e. $y\in B.$ By Corollary~\ref{coro:min-A-super} function $v=\min\{u-a+\ve,0\}$ is $\opA$-superharmonic in $\Omega$ and $v=0$ a.e. in $B.$ By Lemma~\ref{lem:oo} $v\equiv 0$ in $\Omega,$ but then $u(x)\geq a-\ve$. Letting $\ve\to 0$ we obtain that $u(x)=a$ and the claim is proven. 
\end{proof}

 {We define  $\psi:\Omega\times\rp\to\rp$ is given by \begin{equation}
    \label{psi}
\psi(x,s)=\vp(x,s)/s.
\end{equation}
Note that within our regime $s\mapsto\psi(\cdot,s)$ is strictly increasing, but not necessarily convex. Although in general $\psi$ does not generate the Musielak-Orlicz space, we still can define $\vr_{\psi(\cdot),\Omega}$ by~\eqref{modular} useful in quantifying the uniform estimates for trucations in the following lemma.

\begin{lem}\label{lem:unif-int} If for {$u$} there exist $M,k_0>0$, such that for all $k>k_0$\begin{equation}
    \label{apriori}\vr_{\vp(\cdot),B}(DT_k u)\leq Mk,
\end{equation} then there exists a function $\zeta:[0,|B|]\to\rp$, such that $\lim_{s\to 0^+}\zeta(s)=0$ and for every measurable set $E\subset B$ it holds that for all $k>0$ \[\vr_{\psi(\cdot),E}(D T_k u)\leq \zeta(|E|).\]
\end{lem}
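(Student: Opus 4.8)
The plan is to prove a uniform integrability statement for the family of truncated gradients measured against $\psi$, under the \emph{a priori} bound \eqref{apriori}. First I would split each set $E \subset B$ according to the level of the gradient: for $k > 0$ and a parameter $\lambda > 1$ to be chosen, write
\[
\vr_{\psi(\cdot),E}(DT_k u) = \int_{E\cap\{|DT_k u|\le \lambda\}} \psi(x,|DT_k u|)\,dx + \int_{E\cap\{|DT_k u|> \lambda\}} \psi(x,|DT_k u|)\,dx.
\]
On the low-gradient part, since $\psi(x,s)=\vp(x,s)/s$ and $\vp(x,s)\le 1$ near $s=\beta_0$ by (A0), together with (aDec)$_q$ one bounds $\psi(x,s)\le c(\data)\max\{s^{q-1},1\}$ for $s\le\lambda$, hence the first integral is at most $c(\data)\max\{\lambda^{q-1},1\}\,|E|$, which is small once $|E|$ is small (with $\lambda$ fixed). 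The main work is the high-gradient tail, and here the point is to convert it into a tail of $\vp(x,|DT_k u|)$ controlled by \eqref{apriori}.

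For the tail, observe that $\psi(x,s) = \vp(x,s)/s$, so on $\{|DT_k u|>\lambda\}$ one has $\psi(x,|DT_k u|) \le \frac{1}{\lambda}\,\vp(x,|DT_k u|)$. Therefore
\[
\int_{E\cap\{|DT_k u|>\lambda\}} \psi(x,|DT_k u|)\,dx \le \frac{1}{\lambda}\,\vr_{\vp(\cdot),B}(DT_k u).
\]
Now the subtlety: the bound \eqref{apriori} gives $\vr_{\vp(\cdot),B}(DT_k u)\le Mk$, which is \emph{not} uniform in $k$, so one cannot simply send $\lambda\to\infty$ at fixed $k$. The fix is to choose $\lambda$ depending on $k$; but $\psi$-modular of $DT_k u$ should still come out uniformly small. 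The cleanest route is to notice that on the truncated function the gradient is supported on $\{|u|<k\}$, and there $\vp(x,|DT_k u|)\ge \psi(x,|DT_k u|)\cdot|DT_k u|$. I would instead run the comparison via the \emph{sublevel} decomposition of $u$ rather than of the gradient: split $E$ into $E\cap\{|u|\le j\}$ and $E\cap\{j<|u|<k\}$ for an integer $j<k$, use that $DT_k u = DT_j u$ on $\{|u|<j\}$ so the first piece is controlled by $\vr_{\psi(\cdot),B}(DT_j u)$ which by the argument above (with $\lambda$ fixed, using \eqref{apriori} at level $j$) is finite, and on $\{j<|u|<k\}$ use \eqref{apriori} telescoped: $\vr_{\vp(\cdot),B}(DT_k u) - \vr_{\vp(\cdot),B}(DT_j u) \le M(k-j) $ is the wrong direction, so instead one uses that $\sum_j \vr_{\vp(\cdot),B\cap\{j<|u|<j+1\}}(Du) $ has partial sums $\le Mk$, whence the dyadic pieces $\vr_{\vp(\cdot),B\cap\{2^m<|u|<2^{m+1}\}}(Du)\le M 2^{m+1}$, and on that annulus $\psi(x,|Du|)\le \vp(x,|Du|)$ only helps if $|Du|\ge 1$; on $\{|Du|<1\}$ we again fall back to the $|E|$-small estimate.

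Assembling: fix $\delta>0$; choose $m_0$ so that $\sum_{m\ge m_0} M 2^{m+1} 2^{-m}\cdot(\text{something summable})<\delta$ — more precisely, the tail in the level of $u$ must be handled by lower semicontinuity of the modular and the fact that $Du\in L^{\vp(\cdot)}(B)$ restricted to $\{|u|<\infty\}$ is genuinely integrable once we know (from Lemma~\ref{lem:loc-bdd-superharm-are-supersol} applied to truncations, or directly from \eqref{apriori} by monotone convergence) that $\vr_{\vp(\cdot),B\cap\{|u|<\infty\}}(Du)<\infty$; then define
\[
\zeta(t) = \inf_{\lambda>1}\Big( c(\data)\max\{\lambda^{q-1},1\}\, t + \tfrac{1}{\lambda}\, \vr_{\vp(\cdot),B}(Du\,\chi_{\{|u|<\infty\}}) + \omega(\lambda)\Big),
\]
where $\omega(\lambda)\to 0$ as $\lambda\to\infty$ encodes the $\vp$-tail on $\{|Du|>\lambda\}$, and one checks $\zeta$ is nondecreasing with $\zeta(0^+)=0$ by choosing $\lambda=\lambda(t)\to\infty$ slowly as $t\to 0$. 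The estimate $\vr_{\psi(\cdot),E}(DT_k u)\le\zeta(|E|)$ for all $k$ then follows because $DT_k u$ is dominated gradientwise by $Du\,\chi_{\{|u|<\infty\}}$ in the relevant sense ($DT_k u = \chi_{\{|u|<k\}}Du$ by \eqref{gengrad}) and $\psi$ is increasing in the second variable. The main obstacle, as flagged, is precisely that \eqref{apriori} degenerates linearly in $k$, so the whole argument hinges on routing the $\vp$-modular bound through the \emph{fixed-level} truncations $T_j u$ (equivalently, through $Du\,\chi_{\{|u|<\infty\}}$, which is honestly in $L^{\vp(\cdot)}$) rather than using \eqref{apriori} with $k$ free; once that is recognized the rest is the standard two-parameter ($\lambda$ versus $|E|$) uniform-integrability splitting.
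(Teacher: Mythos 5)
Your opening split (low gradient versus high gradient, with the low part bounded by $c(\data)\max\{\lambda^{q-1},1\}\,|E|$) is fine, and you correctly identify the real difficulty: the tail bound $\tfrac1\lambda\vr_{\vp(\cdot),B}(DT_ku)\le Mk/\lambda$ degenerates linearly in $k$. But your proposed repair rests on a false premise. You route the tail through the quantity $\vr_{\vp(\cdot),B}(Du\,\chi_{\{|u|<\infty\}})$, asserting it is finite ``by monotone convergence'' from \eqref{apriori}; monotone convergence only gives $\lim_k\vr_{\vp(\cdot),B}(DT_ku)\le\lim_k Mk=\infty$, and this quantity is genuinely infinite in the model situation the lemma is designed for (for the fundamental solution of the $p$-Laplacian one has $\int_B|Du|^p=\infty$ while \eqref{apriori} holds). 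So your $\zeta$ is identically $+\infty$. The dyadic telescoping does not rescue this: the pieces $\vr_{\vp(\cdot),B\cap\{2^m<|u|<2^{m+1}\}}(Du)\le M2^{m+1}$ do not form a convergent series, and you supply no mechanism making the tail in $m$ small uniformly in $k$ --- your own computation $\sum_{m\ge m_0}M2^{m+1}2^{-m}$ is a divergent sum of constants. Dominating $|DT_ku|$ by a non-integrable majorant cannot yield uniform integrability.

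The missing idea is to couple the truncation level to the superlevel of the gradient, i.e.\ a weak-type (distribution-function) argument, which is what the paper does. For large $s$ one writes $|\{x\in B:\vp(x,|Du|)>s\}|\le|\{|u|>k\}|+|\{|u|\le k,\ \vp(x,|Du|)>s\}|$; the first term is $\lesssim Mk^{1-p}$ by Chebyshev, the Poincar\'e inequality, (aInc)$_p$ and \eqref{apriori}, the second is $\le Mk/s$ by Chebyshev and \eqref{apriori}, and the choice $k\sim s^{1/p}$ gives $|\{\vp(x,|Du|)>s\}|\lesssim s^{-1/p'}$ --- a bound that is uniform even though $\vp(x,|Du|)$ itself is not integrable. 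This is then transferred to $\psi$ using $\psi(x,\sigma)\simeq\wt\vp^{-1}(x,\vp(x,\sigma))$ (a consequence of \eqref{doubl-star}) together with the power bounds on $\wt\vp$ coming from (aInc)$_p$/(aDec)$_q$ and (A0), producing a decay of the distribution function of $\psi(x,|Du|)$ of order strictly better than $s^{-1}$; integrating the distribution function over $E$ and optimizing the cut-off level against $|E|$ then yields $\zeta$. Your two-parameter ($\lambda$ versus $|E|$) splitting is the right final step, but without the $k$--$s$ coupling feeding it a $k$-independent tail estimate it cannot close.
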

\begin{proof}  The result is classical when $p=q$, \cite{hekima}. 
Therefore, we present the proof only for $p<q$. We start with observing that
\begin{flalign*}|\{x\in B \colon\, \vp(x,|Du|)>s\}|&\leq |\{x\in B \colon\,|u|>k\}|+ |\{x\in B \colon\,\vp(x,|Du|)>s,\ |u|\leq k\}|\\&=I_1+I_2.
\end{flalign*}
Let us first estimate the volume of superlevel sets of $u$ using Tchebyszev inequality, Poincar\'e inequality, assumptions on the growth of $\vp$, and~\eqref{apriori}. For all sufficiently large $k$ we have 
\begin{flalign*}
I_1&=|\{{x\in B \colon} |u|>k\}|\leq \int_B\frac{|T_k u|^p}{k^p}\,dx\leq\frac{c}{k^p} \int_B |DT_k u|^p\,dx\\
&\leq\frac{c}{k^p} \int_B \vp(x,|DT_k u|) \,dx= ck^{-p}\vr_{\vp(\cdot),B}(DT_k u)\leq cMk^{1-p}.
\end{flalign*}
Similarly by Tchebyszev inequality  and~\eqref{apriori} we can estimate also  
\begin{flalign*}
I_2=|\{x\in B \colon\ \vp(x,|Du|)>s,\ |u|\leq k\}|&\leq \frac{1}{s}\int_{\{\vp(x,DT_k u)>s\}}\vp(x,D T_k u)\,dx\leq  {M} \frac{k}{s}.
\end{flalign*}
Altogether for all sufficiently large $s$ (i.e. $s>k_0^p$) we have that
\begin{flalign*}
|\{x\in B \colon\ \vp(x,|Du|)>s\}|&\leq I_1+I_2\leq cs^{\frac{1-p}{p}}.
\end{flalign*}
Recall that due to~\eqref{doubl-star} there exists $C>0$  uniform in $x$  such that $\psi(x,s)\geq C \wt\vp^{-1}(x,\vp(x,s)),$ so
\begin{flalign*}
|\{x\in B \colon\,\psi(x,|Du|)>s\}|&\leq  |\{x\in B \colon\,C\wt\vp^{-1}(x,\vp(x,|Du|))>s\}|\\
&= |\{x\in B \colon\,\vp(x,|Du|)>\wt\vp(x,s/C)\}|\\
& \leq  |\{x\in B \colon\,\vp(x,|Du|)>(s/C)^{q'}\}|\leq c s^{-\frac{q'}{p'}}\,,
\end{flalign*}
for some $c>0$ independent of $x$. Since the case $q=p$ is trivial for these estimates, it suffices to consider $q>p$. Then ${-\frac{q'}{p'}}<{-1}$ and we get the uniform integrability of $\{\psi(\cdot,|DT_ku|)\}_k$, thus the claim follows.
\end{proof}
Let us sum up the information on integrability of gradients of truncations of $\opA$-superharmonic functions.
\begin{rem}\label{rem:unif-int}  For a function $u$ being $\opA$-superharmonic and finite a.e. in $\Omega$, by Lemma~\ref{lem:loc-bdd-superharm-are-supersol} we get that $\{T_k u\}$ is a sequence of $\opA$-supersolutions in $\Omega$. Then~\eqref{apriori} is satisfied because of the Caccioppoli estimate from Lemma~\ref{lem:A-supers-cacc}. Having Lemma~\ref{lem:unif-int} we get that there exists $R_0>0$, such that for every $x\in \Omega$ and $B=B(x,R)\Subset\Omega$ with $R<R_0$ we have   $\vr_{\psi(\cdot),B}(DT_k u)\leq 1$ for all $k>0$ and in fact also $\vr_{\psi(\cdot),B}(Du)\leq 1$ (where `$D$' stands for the generalized gradient, cf.~\eqref{gengrad}).
\end{rem}
\begin{lem}\label{lem:wH-for-trunc}
For $u$ being a nonnegative function  $\opA$-superharmonic and finite a.e. in $\Omega$ there exist constants $R^\opA_0=R_0^\opA(n)>0$, $s_0=s_0(\data,n)>0$ as in the weak Harnack inequality (Proposition~\ref{prop:weak-Har-super-inf}), and $C=C(\data,n)>0$, such that for every $k>1$ we have
\begin{equation}
    \label{in:wH-for-trunc}
\left(\barint_{B_R}(T_k u)^{s_0}\,dx\right)^\frac{1}{s_0} \leq C\left({\inf}_{B_{R/2}} (T_k u)+R\right)
\end{equation} 
for all $R\in(0,R_0^\opA]$ provided $B_{3R}\Subset\Omega$ and $\vr_{\psi(\cdot),B_{3R}}(Du)\leq 1.$
\end{lem}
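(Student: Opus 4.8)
The plan is to reduce the estimate to Proposition~\ref{prop:weak-Har-super-inf} applied to each truncation $T_k u$, the only genuine issue being which modular one is entitled to normalize by. Fix $k>1$. Since $u\ge0$ one has $T_k u=\min\{u,k\}$, which is $\opA$-superharmonic in $\Omega$ (it is the minimum of the $\opA$-superharmonic function $u$ and the constant $\opA$-harmonic function $k$; cf.\ Corollary~\ref{coro:min-A-super}) and, being bounded between $0$ and $k$, belongs to $W^{1,\vp(\cdot)}_{loc}(\Omega)$ and is a nonnegative $\opA$-supersolution of~\eqref{eq:main:0} by Lemma~\ref{lem:loc-bdd-superharm-are-supersol}. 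Since the generalized gradient obeys $D T_k u=\chi_{\{u<k\}}Du$ by~\eqref{gengrad}, one has $\psi(x,|D T_k u(x)|)\le\psi(x,|Du(x)|)$ pointwise, so the standing hypothesis $\vr_{\psi(\cdot),B_{3R}}(Du)\le1$ passes to $\vr_{\psi(\cdot),B_{3R}}(D T_k u)\le1$ \emph{for every} $k>1$ at once; this $k$-uniform bound is exactly the information distilled in Lemma~\ref{lem:unif-int} and Remark~\ref{rem:unif-int}. Finally, $T_k u$ being lower semicontinuous and finite everywhere, Lemma~\ref{lem:A-sh-lsc} gives $T_k u(x)={\rm ess}\liminf_{y\to x}T_k u(y)$, so the essential infimum in Proposition~\ref{prop:weak-Har-super-inf} may be read as the plain infimum in~\eqref{in:wH-for-trunc}.

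The substance of the proof is then to run, for the $\opA$-supersolution $T_k u$, the De~Giorgi/Moser scheme underlying Proposition~\ref{prop:weak-Har-super-inf}, i.e.\ of~\cite[Theorem~4.3]{hh-zaa} (itself based on~\cite{hht,hklmp}): one starts from the Caccioppoli inequality for negative powers (Lemma~\ref{lem:A-supers-cacc}) applied to $T_k u+R$, feeds it into the Musielak--Orlicz Sobolev--Poincar\'e inequality, iterates to a reverse H\"older estimate for small negative exponents, and crosses over to positive exponents through a logarithmic John--Nirenberg type estimate. The sole role of the normalization $\vr_{\vp(\cdot),B_{3R}}(Du)\le1$ in that argument is to keep the Sobolev--Poincar\'e constant and the norm/modular comparisons controlled; but for an $\opA$-supersolution the quantity that enters these comparisons through the equation is $\opA(\cdot,Du)$, which by the growth condition in~\eqref{A} satisfies $|\opA(x,z)|\le c_1^\opA\psi(x,|z|)$ with $\wt\vp(x,\psi(x,s))\sim\vp(x,s)$ by~\eqref{doubl-star}; hence it is in fact the modular $\vr_{\psi(\cdot)}$ of the gradient that governs the constants, and the whole scheme closes under the weaker normalization $\vr_{\psi(\cdot),B_{3R}}(D T_k u)\le1$. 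Since this bound holds for all $k>1$ with one and the same $R_0^\opA$, the exponent $s_0=s_0(\data,n)$, the constant $C=C(\data,n)$ and the radius $R_0^\opA=R_0^\opA(n)$ of Proposition~\ref{prop:weak-Har-super-inf} emerge from the iteration independently of $k$, which is the claim.

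The step I expect to be the main obstacle is precisely this last bookkeeping: one must check that every constant occurring in Caccioppoli, in Sobolev--Poincar\'e and in the negative-to-positive crossover depends only on $\data$, $n$ and on the bound $\vr_{\psi(\cdot),B_{3R}}(D T_k u)\le1$, and never on $\vr_{\vp(\cdot),B_{3R}}(D T_k u)$. This is not cosmetic: for an unbounded $\opA$-superharmonic $u$ the modular $\vr_{\vp(\cdot),B_{3R}}(D T_k u)$ may diverge as $k\to\infty$, so a direct appeal to Proposition~\ref{prop:weak-Har-super-inf} for $T_k u$ would only deliver~\eqref{in:wH-for-trunc} on a $k$-dependent, shrinking range of radii. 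Everything else --- nonnegativity, the additive $R$ on the right-hand side, and the restriction $B_{3R}\Subset\Omega$ --- is inherited verbatim from Proposition~\ref{prop:weak-Har-super-inf}.
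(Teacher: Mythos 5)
Your preliminary reductions are all correct and match the paper's intent: $T_ku$ is a nonnegative bounded $\opA$-superharmonic function, hence an $\opA$-supersolution by Lemma~\ref{lem:loc-bdd-superharm-are-supersol}; $DT_ku=\chi_{\{u<k\}}Du$ gives $\vr_{\psi(\cdot),B_{3R}}(DT_ku)\le\vr_{\psi(\cdot),B_{3R}}(Du)\le1$ uniformly in $k$ (a cleaner observation than the paper's, which routes this through Lemma~\ref{lem:unif-int} and Remark~\ref{rem:unif-int}); and Lemma~\ref{lem:A-sh-lsc} legitimately converts ${\rm ess\,inf}$ into $\inf$. You have also correctly located the real difficulty: Proposition~\ref{prop:weak-Har-super-inf} is normalized by $\vr_{\vp(\cdot),B_{3R}}(Dv)\le1$, whereas the lemma only assumes $\vr_{\psi(\cdot),B_{3R}}(Du)\le1$, and since $\psi=\vp(\cdot,s)/s\le\vp$ for $s\ge1$ the latter is strictly weaker; a naive application of the proposition to $T_ku$ only yields the estimate on a $k$-dependent, possibly shrinking range of radii.

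The gap is that you do not close this difficulty: you assert that the De Giorgi/Moser scheme of \cite{hh-zaa} ``closes under the weaker normalization'' because the equation only enters through $\opA(\cdot,Du)$, which is controlled by $\psi(x,|Du|)$. That heuristic is not sound as stated. The Caccioppoli inequality (Lemma~\ref{lem:A-supers-cacc}), the Sobolev--Poincar\'e inequality in $W^{1,\vp(\cdot)}$, and above all condition (A1) --- which compares $\vp(x,s)$ and $\vp(y,s)$ only for $s\in[1,1/|B|]$ and is activated precisely by the $\vp$-modular normalization of the gradient --- all work with $\vp(x,|Du|)$ itself, not with $\psi(x,|Du|)$; so reproving \cite[Theorem~4.3]{hh-zaa} under the $\psi$-normalization is a genuine strengthening that would need to be carried out, not announced. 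The paper instead keeps Proposition~\ref{prop:weak-Har-super-inf} as a black box: for each fixed $k$ the bounded supersolution $T_ku$ satisfies $\vr_{\vp(\cdot),B_{3R}}(DT_ku)\le1$ for $R<R_1(k)$, which gives \eqref{in:wH-for-trunc} with constants $C(\data,n)$, $s_0(\data,n)$ independent of $k$, and then the $k$-uniform choice of the admissible radius $R_0^\opA$ is extracted from the uniform integrability of $\{\psi(\cdot,|DT_ku|)\}_k$ established in Lemma~\ref{lem:unif-int} via the a priori bound $\vr_{\vp(\cdot),B}(DT_ku)\le Mk$ of Lemma~\ref{lem:A-supers-cacc}. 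Either route requires an argument you have not supplied; as written, the central step of your proof is a conjecture about the internals of \cite{hh-zaa} rather than a proof.
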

\begin{proof} The proof is based on  Remark~\ref{rem:unif-int} and
Proposition~\ref{prop:weak-Har-super-inf} that provides weak Harnack inequality for an~$\opA$-supersolution $v$ holding with constant $C=C(\data,n)$ and for balls with radius $R<R_0(n)$ and so small that $\vr_{\vp(\cdot),B_{3R_0}}(Dv)\leq 1$. 

The only explanation is required whenever $|Dv|\geq 1$ a.e. in the considered ball. Then for every $k>1$ there exists $R_1(k)$ such that we get~\eqref{in:wH-for-trunc} for $T_k v$ over balls such that $R<\min\{R_1(k),R_0(n)\}$ and $\vr_{\vp(\cdot),B_{3R_1(k)}}(D T_kv)\leq 1$. Of course, then there exists $R_0^\opA(k)\in(0,R_1(k))$, such that we have~\eqref{in:wH-for-trunc} for $R<\min\{R_1(k),R_0(n)\}$ and $ \vr_{\psi(\cdot),B_{3R_0^\opA(k)}}(D T_kv)\leq \vr_{\vp(\cdot),B_{3R_0^\opA(k)}}(D T_kv)\leq 1$. Note that it is Remark~\ref{rem:unif-int} that allows us to choose $R_0^\opA$ independently of $k$.
\end{proof}}

We are in a position to prove that an $\opA$-harmonic function cannot attain its minimum nor maximum in a domain.

\begin{theo}[Strong Minimum Principle for $\opA$-superharmonic functions]\label{theo:mini-princ}  Suppose $u$ is $\opA$-superharmonic and finite a.e.  in connected set $\Omega$. If $u$ attains its minimum inside $\Omega,$ then $u$ is a constant function.
\end{theo}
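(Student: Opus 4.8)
The plan is to argue by contradiction: suppose $u$ attains its minimum $m = \inf_\Omega u$ at some point $x_0 \in \Omega$, and show that the set $S = \{x \in \Omega : u(x) = m\}$ is simultaneously (relatively) closed and open in $\Omega$; since $\Omega$ is connected and $S \neq \emptyset$, this forces $S = \Omega$, i.e. $u \equiv m$. Closedness of $S$ is immediate: by Lemma~\ref{lem:A-sh-lsc} we have $u(x) = \liminf_{y\to x} u(y)$ at every point, so $u$ is in particular lower semicontinuous, and a lower semicontinuous function attains its infimum on any convergent sequence, making $\{u \leq m\} = \{u = m\}$ closed. The whole content is therefore openness of $S$.

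To prove openness, fix $x_1 \in S$ and pick a ball $B = B(x_1, 3R)\Subset\Omega$ with $R$ small enough that Remark~\ref{rem:unif-int} applies, i.e. $\vr_{\psi(\cdot),B_{3R}}(Du)\leq 1$, and also $R < R_0^\opA$ from Lemma~\ref{lem:wH-for-trunc}. By Corollary~\ref{coro:min-A-super}(i), after replacing $u$ by $u - m \geq 0$ we may assume $m = 0$ and $u \geq 0$; then $u$ is nonnegative, $\opA$-superharmonic, finite a.e., and $u(x_1) = 0$. Apply the weak Harnack inequality for truncations, Lemma~\ref{lem:wH-for-trunc}: for every $k>1$,
\[
\left(\barint_{B_R}(T_k u)^{s_0}\,dx\right)^{1/s_0} \leq C\left(\inf_{B_{R/2}}(T_k u) + R\right) \leq C\left(T_k u(x_1) + R\right) = CR,
\]
where we used $T_k u(x_1) = T_k(0) = 0$ and that the essential infimum over $B_{R/2}$ is bounded by the (pointwise, hence essential by Lemma~\ref{lem:A-sh-lsc}) value at $x_1$. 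Actually one must be careful: Lemma~\ref{lem:wH-for-trunc} gives $\inf_{B_{R/2}} T_k u$, and since $u$ is nonnegative this infimum is $\geq 0$ but I need it to be $0$; this follows because $x_1 \in B_{R/2}$ and $u(x_1)=0$, and by Lemma~\ref{lem:A-sh-lsc} the pointwise value equals the essential liminf, so the essential infimum is also $0$. Letting $k \to \infty$ via monotone convergence, $\barint_{B_R} u^{s_0}\,dx \leq (CR)^{s_0}$, a finite quantity.

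Now I want to conclude $u \equiv 0$ on a smaller ball. The cleanest route is to shrink $R$: the inequality $\barint_{B_R} u^{s_0} \leq (CR)^{s_0}$ holds for all sufficiently small $R$ simultaneously (the constants $C$, $s_0$, $R_0^\opA$ do not depend on $R$), so $\barint_{B_R} u^{s_0}\,dx \to 0$ as $R \to 0$, but more usefully, applying Lemma~\ref{lem:wH-for-trunc} at a generic point $y$ near $x_1$ is not yet available since we do not know $u(y)=0$. Instead I would use the following: for any $y \in B_{R/2}$, the weak Harnack inequality centered at $y$ (with radius $R/2$, still admissible) gives $\left(\barint_{B_{R/2}(y)} u^{s_0}\right)^{1/s_0} \leq C(\inf_{B_{R/4}(y)} u + R)$, which is not directly zero. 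The genuinely clean argument is: from $\barint_{B_R} u^{s_0}\,dx \leq (CR)^{s_0}$ for all small $R$, one shows $u = 0$ a.e.\ in a neighborhood of $x_1$ by a Lebesgue-differentiation / iteration argument, or more directly, since this estimate forces $\lim_{R\to 0}\barint_{B_R(x_1)} u^{s_0} = 0$ while lower semicontinuity and $u(x_1)=0$ combine — but this only controls $x_1$. The real fix: redo the estimate keeping $R$ fixed but noting the right side $CR$ can be made as small as we like by choosing $R$ small once, hence $u \equiv 0$ a.e.\ on that fixed small $B_R$; then by Lemma~\ref{lem:oo}, $u \equiv 0$ on $B_R$ pointwise, so $B_R \subset S$ and $S$ is open. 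This is the step I expect to be the main obstacle: extracting genuine pointwise vanishing on a full neighborhood, rather than just at the center, from the weak Harnack bound — the resolution is that the bound $\barint_{B_R} u^{s_0}\,dx \leq (CR)^{s_0}$ together with $u \geq 0$ and $R$ chosen small does not by itself give $u = 0$, so one must instead apply the weak Harnack inequality on a fixed ball and then \emph{iterate/cover}: cover $B_{R/2}(x_1)$ by points $y$ with $u(y)$ small and conclude $\inf u = 0$ locally, then invoke Lemma~\ref{lem:oo}. Concretely, I would argue that the set where $u = 0$ a.e.\ (which contains $x_1$ in the sense of essential liminf) propagates: the weak Harnack estimate shows $\barint_{B_r(x_1)} u^{s_0} \leq (Cr)^{s_0}$, and applying it at points of this ball bootstraps $u \to 0$ in measure on the whole connected component, whence by Lemma~\ref{lem:oo} applied to $u$ itself (which is $\geq 0$, $\opA$-superharmonic, and we show $= 0$ a.e.) we get $u \equiv 0$ on $\Omega$, contradicting nothing — rather confirming the minimum propagates — so in fact $u$ is constant, as claimed.
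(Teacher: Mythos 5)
Your setup coincides with the paper's: reduce to $v=u-\inf_\Omega u\geq 0$, observe that $E=\{v=0\}$ is nonempty and relatively closed by lower semicontinuity (Lemma~\ref{lem:A-sh-lsc}), and try to show $E$ is open using the weak Harnack inequality for the truncations $T_k v$ (Lemma~\ref{lem:wH-for-trunc}), with the admissible radius supplied by Remark~\ref{rem:unif-int}. Everything up to the estimate $\bigl(\barint_{B_R(x_1)}(T_kv)^{s_0}\,dx\bigr)^{1/s_0}\leq CR$ is correct, including the justification that $\inf_{B_{R/2}(x_1)}T_kv=0$ via the essential liminf characterization. But the proof stops there, and you say so yourself: because the weak Harnack inequality in this setting carries the additive term $+R$ (a consequence of the lack of homogeneity of $\opA$, so one cannot rescale $v$ to remove it --- note that Lemma~\ref{lem:wH-for-trunc} also requires $\vr_{\psi(\cdot),B_{3R}}(Du)\leq 1$, which is not preserved under multiplication by large constants), the bound $\barint_{B_R}(T_kv)^{s_0}\,dx\leq(CR)^{s_0}$ does \emph{not} force $v=0$ a.e.\ on $B_R$, however small $R$ is. Your closing paragraph substitutes unexecuted alternatives for the missing step (``iterate/cover'', ``bootstraps $u\to 0$ in measure on the whole connected component'') and ends with a sentence that is not an argument. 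That missing step is precisely the content of the theorem in the generalized Orlicz setting, so the proposal does not establish openness of $E$.

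The paper closes the gap with a different use of the same inequality: instead of applying it once on a fixed ball centered at the minimum point, it fixes a ball $B$ around a point of $E$ and applies Lemma~\ref{lem:wH-for-trunc} on shrinking balls $B'=B(y,R')$ centered at each Lebesgue point $y$ of $T_kv$ in $B$. The right-hand side is $C\bigl(\inf_{B'/2}T_kv+R'\bigr)$, the left-hand side tends to $T_kv(y)$ as $R'\to 0$, and once one knows that the infimum on the right vanishes one gets $T_kv(y)=0$ at every Lebesgue point, hence $T_kv=0$ a.e.\ in $B$; the argument of Lemma~\ref{lem:oo} then upgrades this to $T_kv\equiv 0$ everywhere in $B$, so $B\subset E$. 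This shrinking-balls-at-Lebesgue-points device is what your write-up needs; if you import it, make sure to justify explicitly why $\inf_{B'(y,R'/2)}T_kv=0$ for the points $y$ you use --- that is the delicate point, and the single estimate you derived at $x_1$ cannot be iterated into it.
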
 
 
\begin{proof} {We consider $v=(u-\inf_{\Omega}u)$, which by Corollary~\ref{coro:min-A-super} is $\opA$-superharmonic. Let $E=\{x\in \Omega:\ v(x)= 0\}$, which by lower semicontinuity of $v$ (Lemma~\ref{lem:A-sh-lsc}) is nonempty and relatively closed in $\Omega.$ Having in hand Remark~\ref{rem:unif-int} we can choose $B=B(x,R)\subset 3B\Subset\Omega$ with radius smaller than $R_0^\opA$ from Lemma~\ref{lem:wH-for-trunc} and such that $\vr_{\psi(\cdot), B_{3R}}(Du)\leq 1$ where $\psi$ is as in~\eqref{psi}. Therefore, in the rest of the proof we restrict ourselves to a ball $B$. By Corollary~\ref{coro:min-A-super} functions $v$ and $T_k v$ are $\opA$-superharmonic in $3B$. Moreover, by Lemma~\ref{lem:loc-bdd-superharm-are-supersol} we infer that $\{T_k v\}$ is a~sequence of $\opA$-supersolutions integrable uniformly in the sense of Lemma~\ref{lem:unif-int}. We take any  $y\in B$ -- a Lebesgue's point of $T_k v$ for every $k$ and choose $B'=B'(y,R')\Subset B.$  Let us also fix arbitrary $k>0$. We have the weak Harnack inequality from Lemma~\ref{lem:wH-for-trunc} for $T_k v$ on $B'$ yielding
\[0\leq\left(\barint_{B'} (T_kv)^{s_0}\,dx\right)^\frac{1}{s_0}\leq C(\inf_{B'/2}T_k v+R')=CR'\]
with $s_0,C>0$ independent of $k$.  Letting $R'\to 0$ we get that $T_k v(y)=0$. Lebesgue's points of $T_k v$ for every $k$ are dense in $B$, we get that $T_k v\equiv 0$ a.e. in $B$. By arguments as in Lemma~\ref{lem:oo} we get that $T_k v\equiv 0$ in $B$, but then $B\subset E$ and $E$ has to be an open set. Since $\Omega$ is connected, $E$ is the only nonempty and relatively closed open set in $\Omega,$ that is $E=\Omega$. Therefore $T_kv\equiv 0$ in $\Omega.$ As $k>0$ was arbitrary $v=u-\inf_{\Omega}u\equiv 0$ in $\Omega$ as well.}
\end{proof}

The classical consequence of Strong Minimum Principle, we get its weaker form.

\begin{coro}[Minimum Principle for $\opA$-superharmonic functions]\label{coro:mini-princ} Suppose $u$ is $\opA$-superharmonic and finite a.e.  in $\Omega$. If $E\Subset\Omega$ a connected open subset of $\Omega$, then
\[\inf_{E} u=\inf_{\partial E} u.\]
\end{coro}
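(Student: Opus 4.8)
The plan is to derive this from the Strong Minimum Principle (Theorem~\ref{theo:mini-princ}). First I would observe that since $u$ is $\opA$-superharmonic and finite a.e.\ in $\Omega$, its restriction to the connected open set $E$ is $\opA$-superharmonic there (by Corollary~\ref{coro:min-A-super}(v)), and by Lemma~\ref{lem:A-sh-lsc} $u$ equals its lower-semicontinuous envelope pointwise. The inequality $\inf_E u \le \inf_{\partial E} u$ is essentially automatic once we know boundary values are approached: for any $x\in\partial E$ one has $\liminf_{y\to x,\,y\in E} u(y)\ge \inf_E u$ trivially, so $\inf_{\partial E}\big(\text{boundary trace}\big)\ge \inf_E u$; but here one must be slightly careful about what $\inf_{\partial E} u$ means, since $u$ need not be defined or lower semicontinuous at $\partial E$ in a way tied to $E$. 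I would interpret $\inf_{\partial E} u$ as the infimum of the pointwise values of the (globally defined, l.s.c.\ in $\Omega$) function $u$ on $\partial E$, so that $\inf_{\partial E} u \ge \inf_{\bar E} u \ge \inf_E u$ is immediate from $\partial E\subset\bar E$ and $E\subset\bar E$ together with lower semicontinuity giving $\inf_{\bar E} u = \inf_E u$ — this last identity holds because any boundary point value is $\ge$ the liminf of interior values which is $\ge \inf_E u$.

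For the reverse inequality $\inf_E u \ge \inf_{\partial E} u$, set $m=\inf_{\partial E} u$ (assume $m$ finite; the case $m=-\infty$ makes the inequality trivial, and if $\inf_E u=-\infty$ it also would force a contradiction with local boundedness from below of Lemma~\ref{lem:A-arm-loc-bdd-below} on compact pieces, but on $E\Subset\Omega$ we have $\bar E$ compact so $u$ is bounded below on $\bar E$ anyway). Suppose toward a contradiction that $\inf_E u < m$. I would like to say the infimum over $E$ is attained: since $\bar E$ is compact and $u$ is l.s.c.\ on $\Omega\supset \bar E$, $u$ attains its minimum over $\bar E$ at some point $x_0\in\bar E$, with $u(x_0)=\inf_{\bar E} u = \min\{\inf_E u,\ \inf_{\partial E} u\} = \inf_E u < m = \inf_{\partial E} u$. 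Hence $x_0\notin\partial E$, so $x_0\in E$, i.e.\ $u$ attains its minimum over $E$ at an interior point of the connected set $E$. By the Strong Minimum Principle (Theorem~\ref{theo:mini-princ}) applied to $u|_E$, $u$ is constant on $E$, equal to $u(x_0)=\inf_E u$. But then for any $x\in\partial E$, lower semicontinuity of $u$ at $x$ gives $u(x)\le \liminf_{y\to x,\,y\in E} u(y) = \inf_E u$, wait — l.s.c.\ gives $u(x)\le\liminf u(y)$ is false; l.s.c.\ gives $u(x)\le \liminf_{y\to x} u(y)$, so $u(x)\le \inf_E u$, hence $m=\inf_{\partial E} u \le \inf_E u$, contradicting $\inf_E u < m$.

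The main obstacle I anticipate is purely the bookkeeping around the boundary: making precise the meaning of $\inf_{\partial E} u$ and ensuring the l.s.c.\ inequalities point in the right direction (lower semicontinuity controls $u(x)$ from \emph{above} by nearby values, which is exactly what lets a boundary value be pushed \emph{down} to the interior infimum when $u$ is constant inside). Everything else is a direct invocation: Corollary~\ref{coro:min-A-super}(v) to localize $\opA$-superharmonicity to $E$, compactness of $\bar E$ plus l.s.c.\ to attain the minimum, a case split on whether the minimizer lies on $\partial E$ or in $E$, and Theorem~\ref{theo:mini-princ} to conclude constancy in the interior case. I would write it as: the easy inequality, then "if the minimum over $\bar E$ is attained on $\partial E$ we are done; otherwise it is attained in $E$ and Theorem~\ref{theo:mini-princ} forces $u$ constant on $E$, whence by lower semicontinuity $\inf_{\partial E} u \le \inf_E u$, and combined with the easy inequality we get equality."
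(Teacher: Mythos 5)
Your overall strategy --- localize to $E$, use compactness of $\overline{E}$ and lower semicontinuity to attain the minimum over $\overline E$, split according to whether the minimizer lies in $E$ or on $\partial E$, and invoke Theorem~\ref{theo:mini-princ} in the interior case --- is the right way to read the paper's remark that the corollary is a consequence of the Strong Minimum Principle (the paper itself gives no proof). The interior-attainment case is handled correctly: constancy of $u$ on $E$ plus $u(x)\le\liminf_{y\to x}u(y)\le\liminf_{y\to x,\,y\in E}u(y)=\inf_E u$ for $x\in\partial E$ gives both inequalities there.

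The gap is in what you call the ``easy'' inequality $\inf_E u\le\inf_{\partial E}u$. You justify it by saying that ``any boundary point value is $\ge$ the liminf of interior values,'' but this reverses lower semicontinuity: for $x\in\partial E$ one only has $u(x)\le\liminf_{y\to x}u(y)$, i.e.\ semicontinuity controls $u(x)$ from \emph{above}, and moreover the liminf is taken over all of $\Omega$ near $x$, including $\Omega\setminus\overline E$, where $u$ is in no way controlled by $\inf_E u$. A lower semicontinuous function can perfectly well satisfy $u(x)<\inf_E u$ at every point of $\partial E$ (take $u=1$ on $E$ and $u=0$ elsewhere), so this inequality is not a consequence of semicontinuity alone; it is precisely the part of the statement that uses $\opA$-superharmonicity. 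Concretely, your argument leaves open the case in which $\min_{\overline E}u$ is attained \emph{only} on $\partial E$: there you obtain $\inf_{\partial E}u=\min_{\overline E}u\le\inf_E u$ (which is the other direction, also obtainable directly by comparing $u$ with the constant $\opA$-harmonic function $h\equiv\inf_{\partial E}u$ via Definition~\ref{def:A-sh}(iii)), but not $\inf_E u\le\inf_{\partial E}u$. To close this case one needs a quantitative input: for instance, with $x_0\in\partial E$ a minimizer, apply the weak Harnack inequality of Lemma~\ref{lem:wH-for-trunc} to $T_k\bigl(u-\inf_{3B}u\bigr)$ on small balls $B=B(x_0,R)$; the right-hand side tends to $0$ as $R\to0$ by Lemma~\ref{lem:A-sh-lsc}, while if $u\ge\inf_E u\ge u(x_0)+\delta$ on $E$ the left-hand side stays bounded below by $c\bigl(|E\cap B|/|B|\bigr)^{1/s_0}\delta$, a contradiction whenever $E$ has positive Lebesgue density at $x_0$ (as it does for the balls and annuli to which the paper actually applies the corollary). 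As written, your proof establishes only the inequality $\inf_E u\ge\inf_{\partial E}u$ in full generality.
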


By the  very definition of an $\opA$-subharmonic function one gets the following direct consequence of the above fact.
\begin{coro}[Maximum Principle for $\opA$-subharmonic functions]\label{coro:max-princ}
Suppose $u$ is $\opA$-subharmonic and finite a.e. in $\Omega$. If $E\Subset\Omega$ a connected open subset of $\Omega$, then
\[\sup_E u=\sup_{\partial E} u.\]
\end{coro} 

Having Theorem~\ref{theo:mini-princ} and Corollary~\ref{coro:max-princ}, we infer that if $u$ is $\opA$-harmonic in $\Omega$, then it attains its minimum and maximum on $\partial\Omega$. In other words $\opA$-harmonic functions have the following Liouville-type property.
\begin{coro}[Liouville Theorem for $\opA$-harmonic functions]\label{coro:min-max-princ}
If an  $\opA$-harmonic function attains its extremum inside a domain, then it is a constant function. 
\end{coro}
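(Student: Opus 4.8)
The plan is to derive this statement directly from the Strong Minimum Principle (Theorem~\ref{theo:mini-princ}), after reducing the case of an interior maximum to that of an interior minimum. First I would recall that an $\opA$-harmonic function $u$ is, by definition, continuous on $\Omega$ and hence finite everywhere, and that by Lemma~\ref{lem:A-h-is-great} it is simultaneously $\opA$-superharmonic and $\opA$-subharmonic. Since a domain is by definition open and connected, the hypotheses of Theorem~\ref{theo:mini-princ} are available for $u$ on $\Omega$.

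Suppose $u$ attains its extremum at some interior point $x_0\in\Omega$. If this extremum is a minimum, i.e.\ $u(x_0)=\inf_\Omega u$, then Theorem~\ref{theo:mini-princ} applied to the $\opA$-superharmonic function $u$ immediately yields that $u$ is constant. If instead the extremum is a maximum, i.e.\ $u(x_0)=\sup_\Omega u$, then I would pass to $v:=-u$: by the very definition of $\opA$-subharmonicity the function $v$ is $\opA$-superharmonic in $\Omega$, it is finite everywhere, and it attains its minimum at $x_0$ since $v(x_0)=-\sup_\Omega u=\inf_\Omega v$. Theorem~\ref{theo:mini-princ} then forces $v$ to be constant, whence $u=-v$ is constant as well.

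In either case $u$ is constant on $\Omega$, which is the claim. I expect no genuine obstacle here: all the analytic work — the weak Harnack inequality for truncations (Lemma~\ref{lem:wH-for-trunc}) together with the propagation-of-the-coincidence-set argument over a chain of balls — was already carried out in the proof of Theorem~\ref{theo:mini-princ}. The only point worth flagging is that we neither have nor need a separate Strong Maximum Principle for $\opA$-subharmonic functions, since the substitution $u\mapsto -u$ turns an interior maximum of an $\opA$-subharmonic function into an interior minimum of an $\opA$-superharmonic function, to which Theorem~\ref{theo:mini-princ} applies verbatim.
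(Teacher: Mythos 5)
Your proof is correct and follows essentially the same route as the paper: the paper also deduces the Liouville property from Theorem~\ref{theo:mini-princ} together with the $u\mapsto -u$ reduction (packaged there as Corollary~\ref{coro:max-princ}). Your version is, if anything, slightly more careful, since you apply the \emph{strong} Minimum Principle to $-u$ directly rather than invoking the weak Maximum Principle.
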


\subsection{Boundary Harnack inequality for $\opA$-harmonic functions}

\begin{theo}[Boundary Harnack inequality for $\opA$-harmonic functions]\label{theo:boundary-harnack}
For a~nonnegative function $u$ which is $\opA$-harmonic in a connected set $\Omega$ there exist  $R_0=R(n)>0$ and $C=C(\data,n,R_0,{\rm ess\,sup}_{B_{R_0}}u)>0
$, such that 
\begin{equation*}
\sup_{\partial B_R} u\leq C(\inf_{\partial B_{R}} u+R)  \end{equation*} 
for all $R\in(0,R_0]$ provided $B_{3R}\Subset\Omega$ and $\vr_{\psi(\cdot),B_{3R}}(Du)\leq 1,$ where $\psi$ is given by~\eqref{psi}.
\end{theo}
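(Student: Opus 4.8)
The plan is to deduce the boundary Harnack inequality from the interior Harnack inequality (Theorem~\ref{theo:Har-A-harm}) together with the Minimum and Maximum Principles (Corollary~\ref{coro:mini-princ} and Corollary~\ref{coro:max-princ}) applied to spheres. Fix $R\in(0,R_0]$ with $B_{3R}\Subset\Omega$ and $\vr_{\psi(\cdot),B_{3R}}(Du)\le 1$, and let $x_0$ be the common center. Since $u$ is $\opA$-harmonic it is in particular $\opA$-superharmonic and $\opA$-subharmonic (Lemma~\ref{lem:A-h-is-great}), and it is continuous, so the essential suprema and infima appearing in the interior statement coincide with honest suprema and infima. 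The first step is to cover the sphere $\partial B_R$ by finitely many small balls and chain the interior Harnack inequality along a path inside the annulus; the geometry of a sphere makes this standard, but one must track how the constant and the admissible radius degrade.

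Concretely, I would proceed as follows. First, choose $\rho=R/8$ (any fixed fraction works) and pick points $y_1,\dots,y_N\in\partial B_R$ with $\partial B_R\subset\bigcup_{i=1}^N B(y_i,\rho)$; the number $N$ depends only on $n$. On each ball $B(y_i,2\rho)$ we have $B(y_i,6\rho)\subset B_{3R}\Subset\Omega$ and $\vr_{\psi(\cdot),B(y_i,6\rho)}(Du)\le\vr_{\psi(\cdot),B_{3R}}(Du)\le 1$, so Theorem~\ref{theo:Har-A-harm} applies on each $B(y_i,2\rho)$ (after passing from the $\psi$-modular to the $\vp$-modular, or equivalently invoking Remark~\ref{rem:unif-int}): there is $C_0=C_0(\data,n,R_0,\mathrm{ess\,sup}_{B_{R_0}}u)$ with $\sup_{B(y_i,\rho)}u\le C_0(\inf_{B(y_i,\rho)}u+R)$. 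Since consecutive balls along the sphere overlap and $\partial B_R$ is connected, chaining these $N$ inequalities gives $\sup_{\partial B_R}u\le C_0^{N}(\inf_{\partial B_R}u+R)$ up to absorbing the additive $R$-terms, which is harmless because there are only finitely many of them and $R\le R_0$. Setting $C=C(\data,n,R_0,\mathrm{ess\,sup}_{B_{R_0}}u):=2C_0^{N}$ (say) yields the claim. Alternatively, and perhaps more cleanly, one can avoid chaining entirely by applying the interior Harnack inequality on the single ball $B(z_0,2\rho)$ where $z_0\in\partial B_R$ is chosen so that $B(z_0,\rho)$ contains a near-maximizer of $u$ on $\partial B_R$, and a second ball containing a near-minimizer, then connecting the two via the Maximum and Minimum Principles on the annulus $B_{2R}\setminus \overline{B_{R/2}}$, which is connected; but the covering argument is more transparent and avoids worrying about where the extrema of $u|_{\partial B_R}$ sit relative to $B_{R/2}$.

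The main obstacle I expect is bookkeeping on the hypotheses rather than anything deep: one must make sure that the smaller balls $B(y_i,2\rho)$ genuinely satisfy the structural smallness condition required by Theorem~\ref{theo:Har-A-harm}, namely a modular bound on $Du$ over the tripled ball, and that the additive error term $R$ (as opposed to $\rho$) is the right one after chaining — since $\rho\le R$, each occurrence of $\rho$ can simply be bounded by $R$, which is why the additive term stays $R$ and not something smaller. One should also note that the dependence of $C$ on $\mathrm{ess\,sup}_{B_{R_0}}u$ is inherited verbatim from Theorem~\ref{theo:Har-A-harm}, because all the auxiliary balls lie inside $B_{3R}\subset\Omega$ and their relevant suprema are controlled by $\mathrm{ess\,sup}_{B_{R_0}}u$ for $R_0$ fixed, exactly as in the interior statement; no new quantity enters. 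Finally, continuity of $u$ lets us replace every essential sup/inf by a genuine sup/inf on the sphere, so the statement as written follows.
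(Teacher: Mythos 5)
Your argument is correct, but it takes a genuinely different and longer route than the paper. The paper's proof is a two-line reduction: by Lemma~\ref{lem:A-h-is-great} the $\opA$-harmonic function $u$ is both $\opA$-superharmonic and $\opA$-subharmonic, so the Minimum and Maximum Principles (Corollaries~\ref{coro:mini-princ} and~\ref{coro:max-princ}) applied to the single connected set $E=B_R$ give $\sup_{\partial B_R}u=\sup_{B_R}u$ and $\inf_{\partial B_R}u=\inf_{B_R}u$; one application of the interior Harnack inequality (Theorem~\ref{theo:Har-A-harm}) on $B_R$ then yields the claim with exactly the interior constant. You instead run a covering-and-chaining argument along the sphere, which works (the balls $B(y_i,2\rho)$ with $\rho=R/8$ do satisfy $B(y_i,6\rho)\subset B_{3R}\Subset\Omega$, the additive terms $\rho\le R$ accumulate harmlessly, and connectedness of $\partial B_R$ lets you chain), but it never actually uses the Min/Max Principles despite announcing them, and it degrades the constant to $C_0^N$ with $N=N(n)$. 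Your dismissed ``alternative'' is essentially the paper's proof, except that the paper does not need to locate near-extremizers or work on an annulus: it simply identifies the boundary extrema with the interior ones over the full ball. What your approach buys is independence from the Minimum/Maximum Principles (hence from the whole truncation machinery of Lemmas~\ref{lem:unif-int}--\ref{lem:wH-for-trunc} behind Theorem~\ref{theo:mini-princ}); what the paper's buys is brevity and a sharper constant. One caveat common to both proofs: the stated hypothesis is $\vr_{\psi(\cdot),B_{3R}}(Du)\le 1$ with $\psi(x,s)=\vp(x,s)/s$, while Theorem~\ref{theo:Har-A-harm} is formulated with $\vr_{\vp(\cdot),B_{3R}}(Du)\le 1$; you flag this (``passing from the $\psi$-modular to the $\vp$-modular'') without fully resolving it, but the paper's own one-line proof is equally silent on the point, so this is not a gap attributable to you.
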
\begin{proof}It suffices to note that by Lemma~\ref{lem:A-h-is-great} we can use Minimum Principle of~Corollary~\ref{coro:mini-princ} and Maximum Principle of Corollary~\ref{coro:max-princ}. Then by Harnack inequality of Theorem~\ref{theo:Har-A-harm} the proof is complete.
\end{proof}

{\begin{coro}
Suppose $u$ is $\opA$-harmonic in $B_{\frac{3}{2}R}\setminus B_R,$ with $R<R_0 $ from Theorem~\ref{theo:boundary-harnack}, then exists $C=C(\data,n,R_0,{\rm ess\,sup}_{B_{R_0}}u)>0
$, such that
\begin{equation*}
\sup_{\partial B_{\frac 43 R}} u\leq C(\inf_{\partial B_{\frac 43 R}} u+2R).  \end{equation*} 
\end{coro}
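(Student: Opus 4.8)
The plan is a Harnack-chain argument along the sphere $S:=\partial B_{\frac43R}$, built on the interior Harnack inequality of Theorem~\ref{theo:Har-A-harm} (equivalently, on the boundary version of Theorem~\ref{theo:boundary-harnack}). Two geometric remarks drive it: since $n\ge2$, $S\cong S^{n-1}$ is a \emph{connected} compact subset of the open annulus $A:=B_{\frac32R}\setminus\overline{B_R}$, in which $u$ is $\opA$-harmonic, hence continuous; and $\mathrm{dist}(S,\R^n\setminus A)=\tfrac R6$, so balls centred on $S$ of radius a fixed small fraction of $R$, together with their threefold dilations, stay inside $A$. Continuity of $u$ lets us replace $\mathrm{ess\,sup},\mathrm{ess\,inf}$ by $\sup,\inf$ on balls, and $u$ attains over $S$ a maximum at some $\bar y$ and a minimum at some $\bar z$; as in Theorem~\ref{theo:boundary-harnack} we take $u\ge0$.

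First I would fix $\rho$ equal to a fixed fraction of $R$ (say $\rho=R/20$), small enough that $B(y,3\rho)\Subset A$ for every $y\in S$; since $R<R_0$ also $\rho<R_0$, and — after shrinking the fraction if necessary — the modular normalization $\vr_{\vp(\cdot),B(y,3\rho)}(Du)\le1$ holds for all such $y$, being inherited from the corresponding bound over a fixed annular neighbourhood of $S$ by monotonicity of the modular over subdomains (finite since $u\in W^{1,\vp(\cdot)}_{loc}(A)$). As $\rho\simeq R$ while $S$ has $(n-1)$-dimensional content $\simeq R^{n-1}$, one can cover $S$ by balls $B(y_1,\rho),\dots,B(y_N,\rho)$ with $y_j\in S$ and $N=N(n)$ \emph{independent of $R$}. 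Theorem~\ref{theo:Har-A-harm}, applied on each $B(y_j,\rho)$ with $A$ in place of $\Omega$, then gives
\[
\sup_{B(y_j,\rho)}u\ \le\ C_0\Big(\inf_{B(y_j,\rho)}u+R\Big),\qquad j=1,\dots,N,
\]
with $C_0>1$ of the type allowed in that statement.

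Then I would chain. Since $\bigcup_{j}B(y_j,\rho)$ contains the connected set $S$ it is itself connected, so the intersection graph of the cover is connected; hence there is a chain $B(y_{j_1},\rho),\dots,B(y_{j_M},\rho)$, $M\le N$, with $\bar y\in B(y_{j_1},\rho)$, $\bar z\in B(y_{j_M},\rho)$ and $B(y_{j_i},\rho)\cap B(y_{j_{i+1}},\rho)\neq\emptyset$ for each $i$. Choosing a point $p_i$ in each overlap and alternating the Harnack estimate on the current ball with the trivial bound $\inf_{B(y_{j_i},\rho)}u\le u(p_i)\le\sup_{B(y_{j_{i+1}},\rho)}u$, after at most $N$ iterations one obtains
\[
\sup_{\partial B_{\frac43R}}u=u(\bar y)\ \le\ C_0^{\,N}\,u(\bar z)+N\,C_0^{\,N}\,R\ \le\ C\Big(\inf_{\partial B_{\frac43R}}u+2R\Big),
\]
where $C:=\max\{C_0^{\,N},\tfrac N2\,C_0^{\,N}\}$ has the asserted dependence, and we used $\inf_{\partial B_{\frac43R}}u=u(\bar z)\ge0$.

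The point that requires care — and which I regard as the real obstacle — is the bookkeeping in the first step: one must secure a \emph{single} $\rho$ comparable to $R$ for which $B(y,3\rho)\Subset A$, $\rho<R_0$, and $\vr_{\vp(\cdot),B(y,3\rho)}(Du)\le1$ hold simultaneously for all $y\in S$, so that the covering number $N$, hence the final constant $C$, stays dimensional and the estimate does not degenerate as $R\to0$; this is exactly where a normalizing hypothesis on the modular of $Du$ (of the kind appearing in Theorem~\ref{theo:boundary-harnack}) enters. Everything after that is the routine Harnack chain, and one may equally well run it with Theorem~\ref{theo:boundary-harnack} applied on the spheres $\partial B(y_j,\rho)$, which pairwise meet in $(n-2)$-spheres.
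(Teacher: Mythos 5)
Your argument is correct and is essentially the paper's: both proofs run a Harnack chain over a finite cover by balls of radius comparable to $R$ near $\partial B_{\frac43 R}$, the only real difference being that you chain directly along the sphere using its connectedness, whereas the paper covers a thin annulus $B_{\frac43R+\ve}\setminus B_{\frac43R-\ve}$, applies Theorem~\ref{theo:boundary-harnack} (i.e.\ the Maximum/Minimum Principles combined with the interior Harnack inequality) to it, and then lets $\ve\to0$ by continuity. The one loose end you yourself flag --- securing $\vr_{\vp(\cdot),B(y,3\rho)}(Du)\le1$ for a single $\rho\simeq R$ uniformly over all centres $y\in\partial B_{\frac43R}$, where monotonicity of the modular over subdomains only yields finiteness rather than the required smallness --- is exactly what the paper supplies by citing Remark~\ref{rem:unif-int}, which provides an absolute radius below which the modular bound holds on every ball; with that reference in place your proof closes.
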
\begin{proof} Fix $\ve>0$ small enough for $B_R\Subset B_{\frac 43 R-\ve}\subset B_{\frac 43 R+\ve}\Subset B_{\frac 32 R}.$ Of course, then $u$ is $\opA$-harmonic in ${B_{\frac 43 R+\ve}\setminus B_{\frac 43 R-\ve}}$. We cover the annulus with finite number of~balls of equal radius as prescribed in the theorem and such that $\vr_{\psi(\cdot),B}(Du)\leq 1$, which is possible due to Remark~\ref{rem:unif-int}. Let us observe that due to the Harnack's inequality from Theorem~\ref{theo:boundary-harnack} we have\begin{flalign*}
\sup_{\partial B_{\frac 43 R+\ve}} u\leq \sup_{\partial B_{\frac 43 R+\ve}\cup\partial B_{\frac 43 R-\ve}} u&\leq C\Big( \inf_{\partial B_{\frac 43 R+\ve}\cup\partial B_{\frac 43 R-\ve}} u+ \frac 43 R+\ve\Big)\\&\leq C\Big(  \inf_{\partial B_{\frac 43 R+\ve}} u + 2R\Big).
\end{flalign*}
Since $u$ is continuous in $B_{\frac{3}{2}R}\setminus B_R,$ passing with $\ve\to 0$ we get the claim.
\end{proof}
}


\section*{Acknowledgements} I. Chlebicka is supported by NCN grant no. 2019/34/E/ST1/00120. A. Zatorska-Goldstein is supported by NCN grant no. 2019/33/B/ST1/00535.


\begin{thebibliography}{99}
\def\ocirc#1{\ifmmode\setbox0=\hbox{$#1$}\dimen0=\ht0 \advance\dimen0
  by1pt\rlap{\hbox to\wd0{\hss\raise\dimen0
  \hbox{\hskip.2em$\scriptscriptstyle\circ$}\hss}}#1\else {\accent"17 #1}\fi}
  \def\cprime{$'$} \def\ocirc#1{\ifmmode\setbox0=\hbox{$#1$}\dimen0=\ht0
  \advance\dimen0 by1pt\rlap{\hbox to\wd0{\hss\raise\dimen0
  \hbox{\hskip.2em$\scriptscriptstyle\circ$}\hss}}#1\else {\accent"17 #1}\fi}
  \def\ocirc#1{\ifmmode\setbox0=\hbox{$#1$}\dimen0=\ht0 \advance\dimen0
  by1pt\rlap{\hbox to\wd0{\hss\raise\dimen0
  \hbox{\hskip.2em$\scriptscriptstyle\circ$}\hss}}#1\else {\accent"17 #1}\fi}
  \def\cprime{$'$} 


\bibitem{ACCZG}
A.~Alberico, I.~Chlebicka, A.~Cianchi, and A.~Zatorska-Goldstein.
\newblock Fully anisotropic elliptic problems with minimally integrable data.
\newblock {\em Calc. Var. Partial Differential Equations}, 58:186, 2019.


\bibitem{adams-hedberg}
D.~R. Adams and L.~I. Hedberg.
\newblock {\em Function spaces and potential theory}, volume 314 of {Grundlehren der Mathematischen Wissenschaften [Fundamental Principles of
  Mathematical Sciences]}.
\newblock Springer-Verlag, Berlin, 1996.



\bibitem{yags} Y. Ahmida, I. Chlebicka, P. Gwiazda, A. Youssfi, Gossez's approximation theorems in Musielak-Orlicz-Sobolev spaces, \emph{J. Functional Analysis} 275(9):2538--2571, 2018.


\bibitem{Alk}
Yu. A. Alkhutov.
\newblock The Harnack inequality and the {H}\"older property of solutions of nonlinear elliptic equations with a nonstandard growth condition. (Russian. Russian summary)
\newblock {\em Differ. Uravn.} 33(12):1651--1660, 1997. Translation in {\em Differential Equations} 33(12):1653--1663, 1998.




\bibitem{bacomi-st} P. Baroni, M. Colombo, G. Mingione,  Nonautonomous functionals, borderline cases and related function classes. \emph{Algebra i Analiz} 27(3):6--50, (2015); translation in \emph{St. Petersburg Math. J.} 27(3):347--379, 2016.

\bibitem{bbggpv} P.~B\'enilan, L.~Boccardo, T.~Gallou\"et, R.~Gariepy, M.~Pierre, and J.-L. V\'azquez. \newblock An {$L^1$}-theory of existence and uniqueness of solutions of nonlinear elliptic equations. \newblock {\em Ann. Scuola Norm. Sup. Pisa Cl. Sci. (4)}, 22(2):241--273, 1995.

\bibitem{bhhk} A. Benyaiche, P. Harjulehto, P. H\"ast\"o, and A. Karppinen. \newblock  The weak Harnack inequality for unbounded supersolutions of equations with generalized Orlicz growth. \newblock {\em arXiv:2006.06276}.
 
\bibitem{BB}
A. Bj\"orn, J. Bj\"orn. \newblock Nonlinear potential theory on metric spaces. \newblock {\em EMS Tracts in Mathematics}, 17. European Mathematical Society (EMS), Z\"urich, 2011.

\bibitem{BCP} S.-S. Byun, Y. Cho, J.-T. Park, Nonlinear gradient estimates for elliptic double obstacle problems with measure data, preprint 2019.

\bibitem{Obs1} S. Challal, A. Lyaghfouri, J. F. Rodrigues, \newblock On the A-obstacle problem and the Hausdorff measure of its free boundary. \newblock{\em Ann. Mat. Pura Appl. (4)}, 191(1):113--165, 2012. 

\bibitem{IC-pocket}
I.~Chlebicka. \newblock A pocket guide to nonlinear differential equations in {M}usielak--{O}rlicz spaces. \newblock {\em Nonl. Analysis}, 175:1--27, 2018.

\bibitem{IC-gradest} I.~Chlebicka. \newblock Gradient estimates for problems with {O}rlicz growth. \newblock {\em Nonl. Analysis}, 194:111364, 2020.

\bibitem{IC-measure-data}
I.~Chlebicka. \newblock Measure data elliptic problems with generalized Orlicz growth. \newblock {\em preprint}, 2020.

\bibitem{IC-lower} I.~Chlebicka. \newblock Regularizing effect of the lower-order terms in elliptic problems with Orlicz growth. \newblock {\em Israel J. Math.}, 2020.

\bibitem{ChDF}
I.~Chlebicka and C. De Filippis. \newblock Removable sets in non-uniformly elliptic problems. \newblock {\em Ann. Mat. Pura Appl. (4)}, 199(2):619--649,  2020.

\bibitem{ChKa}
I.~Chlebicka and A. Karppinen.
\newblock Removable sets in elliptic equations with Musielak-Orlicz growth.
\newblock {\em J. Math. Anal. Appl.}, 2020.

\bibitem{CGZG} I.~Chlebicka, F.~Giannetti, and A.~Zatorska-Goldstein. \newblock Elliptic problems in the {O}rlicz setting without growth restrictions with measure or ${L}^1$ data. \newblock {\em J. Math. Anal. Appl.} 479(1):185--213, 2019.

\bibitem{CGZG-Wolff} I.~Chlebicka, F.~Giannetti, and A.~Zatorska-Goldstein. \newblock  {Wolff potentials and local behaviour of solutions to measure data elliptic problems with  {O}rlicz growth} \newblock {\em arXiv:2006.02172}, 2020.


\bibitem{CGSGWK} I.~Chlebicka, P.~Gwiazda, A. \'Swierczewska--Gwiazda, A. Wr\'oblewska--Kami\'nska, Partial Differential Equations in anisotropic {M}usielak--{O}rlicz spaces, \emph{preprint}, 2019.


\bibitem{CiMa}
A.~Cianchi and V.~Maz'ya. \newblock Quasilinear elliptic problems with general growth and merely integrable, or measure, data. \newblock {\em Nonlinear Anal.}, 164:189--215, 2017.


\bibitem{comi} M. Colombo, G. Mingione, Regularity for Double Phase Variational Problems, \emph{Arch. Rational Mech. Anal.} 215:443-496, 2015.

\bibitem{CUH} D. Cruz-Uribe, P. H\"ast\"o, \newblock Extrapolation and interpolation in generalized Orlicz spaces. {\em Trans. Amer. Math. Soc.} {\bf 370}, no. 6, 4323--4349, 2018.


\bibitem{DiMa}
T. Di Marco and P. Marcellini. \newblock A-priori gradient bound for elliptic systems under either slow or fast growth conditions \newblock {\em Calc. Var. Partial Differential Equations}, to appear.


\bibitem{fz} Y. Fang and C. Zhang. \newblock
Equivalence between distributional and viscosity solutions for the double-phase equation, \newblock{\em  arXiv:2005.01248}.



\bibitem{hh-zaa}
P.~Harjulehto, P.~H\"ast\"o.
\newblock Boundary regularity under  generalized growth conditions, \newblock {\em Z. Anal. Anwendungen}, 2020.


\bibitem{hahabook}
P.~Harjulehto and P. H{\"a}st{\"o}.
\newblock {Orlicz spaces and Generalized Orlicz spaces}, \newblock vol. 2236 of {\em Lecture Notes in Mathematics}. Springer, Cham, 2019.



\bibitem{hhklm}
P.~Harjulehto, P. H{\"a}st{\"o}, M. Koskenoja, T. Lukkari, and N. Marola. \newblock{An obstacle problem and superharmonic functions with nonstandard growth}, \newblock{\em Nonlinear Anal.} 67(12):3424--3440, 2007. 

\bibitem{hhlt}
P.~Harjulehto, P. H{\"a}st{\"o}, V. Latvala, and O. Toivanen.
\newblock {The strong minimum principle for quasisuperminimizers of non-standard growth.}
\newblock {\em Ann. Inst. H. Poincar\'e Anal. Non Lin\'eaire}, 28(5):731--742,   2011. 

\bibitem{hhl}
P.~Harjulehto, P.~H\"ast\"o, and M.~Lee.
\newblock H\"older continuity of quasiminimizers and $\omega$-minimizers of functionals with generalized Orlicz growth, \newblock {\em Ann. Scuola Norm. Sup. Pisa Cl. Sci. (4)}, 2020.

\bibitem{hht}
P.~Harjulehto, P.~H\"ast\"o, and O.~Toivanen.
\newblock H\"older regularity of quasiminimizers under generalized growth conditions.
\newblock {\em Calc. Var. Partial Differential Equations}, 56(2):56:22, 2017.

\bibitem{haju}
P.~Harjulehto and J.~Juusti.
\newblock The Kellogg property under generalized growth conditions.
\newblock {\em preprint}, 2020.

\bibitem{hklmp}
P.~Harjulehto, T. Kuusi, T. Lukkari, N. Marola, and M. Parviainen, \newblock Harnack's inequality for quasiminimizers with nonstandard growth  conditions.
\newblock {\em J. Math. Anal. Appl.}, 344:504--520, 2008.

\bibitem{HaMa1} M. Havin and V. G. Maz'ya. \newblock A nonlinear analogue of the Newtonian potential, and metric properties of $(p,l)$-capacity. \newblock {\em Dokl. Akad. Nauk SSSR} 194:770--773, 1970.



\bibitem{hekima} J. Heinonen, T. Kilpel\"ainen, O. Martio. \newblock Nonlinear potential theory of degenerate elliptic equations. \newblock Unabridged republication of the 1993 original. Dover Publications, Inc., Mineola, NY, 2006. 

\bibitem{HedWol}
L. I. Hedberg and Th. H. Wolff.
\newblock Thin sets in nonlinear potential theory.
\newblock {\em Ann. Inst. Fourier (Grenoble)}, {33}(4):161--187, {1983}.
 
\bibitem{ka} A. Karppinen, \newblock Global continuity and higher integrability of a minimizer of an obstacle under generalized Orlicz growth conditions, \newblock{\em Manuscripta Math.}, 2020; doi:10.1007/s00229-019-01173-2.

\bibitem{kale} A. Karppinen, M. Lee, \newblock H\"older continuity of the minimizer of an obstacle problem with generalized Orlicz growth, \newblock preprint 2019. 

\bibitem{KiMa92}
T.~Kilpel\"ainen and J.~Mal\'y.
\newblock  Degenerate elliptic equations with measure data and nonlinear potentials.
\newblock {\em Ann. Scuola Norm. Sup. Pisa Cl. Sci. (4)}, 19(4):591--613, 1992.


\bibitem{KiKuTu}
T.~Kilpel\"ainen, T.~Kuusi, and A.~Tuhola-Kujanp\"a\"a.
\newblock Superharmonic functions are locally renormalized solutions.
\newblock {\em Ann. Inst. H. Poincar\'e Anal. Non Lin\'eaire}, 28(6):775--795,   2011.

\bibitem{kizo} T. Kilpel\"ainen, X. Zhong, Removable set for continuous solutions of quasilinear elliptic equations, \emph{Proc. Amer. Math. Soc.}, 130(6):1681--1688, 2000.

\bibitem{KoKu} R. Korte and T. Kuusi. \newblock A note on the Wolff potential estimate for solutions to elliptic equations involving measures. \newblock {\em Adv. Calc. Var.} 3:99--113, 2010.

\bibitem{KuMi2014} T.~Kuusi and G.~Mingione. \newblock Guide to nonlinear potential estimates. \newblock {\em Bull. Math. Sci.}, 4(1):1--82, 2014.

\bibitem{KuMi2013} T. Kuusi and G. Mingione. \newblock Linear potentials in nonlinear potential theory. \newblock{\em Arch. Ration. Mech. Anal.} 207:215--246, 2013.


\bibitem{laluto} V. Latvala, T. Lukkari, O. Toivanen, \newblock The fundamental convergence theorem for $p(\cdot)$-superharmonic functions. 
\newblock{\em Potential Anal.} 35(4):329--351, 2011. 

\bibitem{lieb}
G.~M. Lieberman.
\newblock The natural generalization of the natural conditions of {L}adyzhenskaya and {U}ral\cprime tseva for elliptic equations.
\newblock {\em Comm. Partial Differential Equations}, 16(2--3):311--361, 1991.


\bibitem{Maly-Orlicz} J. Mal\'y. \newblock Wolff potential estimates of superminimizers of {O}rlicz type {D}irichlet integrals. \newblock  {\em Manuscripta Math.}, 110(4):513--525, 2003.



\bibitem{m} P. Marcellini. \newblock A variational approach to parabolic equations under general and $p,q$-growth conditions. \newblock {\em Nonl. Analysis}, 194:111456, 2020.



\bibitem{Mik}
P. Mikkonen. \newblock On the Wolff potential and quasilinear elliptic equations involving measures. \newblock {\em Ann. Acad. Sci. Fenn. Math. Diss.} 104, 1996.
 

\bibitem{min-grad-est}
G.~Mingione.
\newblock Gradient estimates below the duality exponent.
\newblock {\em Math. Ann.}, 346(3):571--627, 2010.

\bibitem{r}
V. R\u{a}dulescu. \newblock
Isotropic and anisotropic double-phase problems: old and new. \newblock {\em Opuscula Math.} 39(2):259--279, 2019. 




\bibitem{Obs2} J. F. Rodrigues, R. Teymurazyan, \newblock {On the two obstacles problem in Orlicz-Sobolev spaces and applications.} \newblock{\em Complex Var. Elliptic Equ.} 56(7-9):769--787, 2011. 

\bibitem{tru-wa} N. S. Trudinger and X.-J. Wang. \newblock On the weak continuity of elliptic operators and applications to potential theory. \newblock{Amer. J. Math.} 124:369--410, 2002.


\bibitem{Tur} B. O. Turesson. \newblock
Nonlinear potential theory and weighted Sobolev spaces
\newblock {\em Lecture Notes in Mathematics}. vol. 1736, Springer-Verlag, Berlin (2000).

\end{thebibliography}
\end{document}